
\documentclass[aap]{imsart}

\RequirePackage{amsthm,amsmath,amsfonts,amssymb}
\RequirePackage[numbers]{natbib}
\RequirePackage[colorlinks,citecolor=blue,urlcolor=blue]{hyperref}
\usepackage{paralist}
\usepackage{mathrsfs}
\usepackage{stmaryrd}

\usepackage{tikz,pgfplots}
\pgfplotsset{compat=1.15}
\usetikzlibrary{arrows}
\usepackage{caption,subcaption}

\startlocaldefs
\theoremstyle{plain}
\newtheorem{lemma}{Lemma}[section]
\newtheorem{theorem}[lemma]{Theorem}
\newtheorem{proposition}[lemma]{Proposition}
\newtheorem{corollary}[lemma]{Corollary}

\theoremstyle{remark}
\newtheorem{definition}[lemma]{Definition}
\newtheorem{remark}[lemma]{Remark}


\DeclareMathOperator*{\argmax}{argmax}

\newcommand{\noi}{\noindent}

\newcommand{\lgeo}{\llbracket}
\newcommand{\rgeo}{\rrbracket}

\newcommand{\un}{{\bf 1}}
\newcommand{\dd}{{\rm d}}
\newcommand{\I}[1]{\boldsymbol{1}_{\{#1\}}}

\newcommand{\bbU}{\mathbb U}

\newcommand{\bbN}{\mathbb N}

\newcommand{\bbT}{\mathbb T}

\newcommand{\ccF}{\mathscr F}

\def\ino{ \! \in \! }

\newcommand{\HS}{\mathcal{S}}

\newcommand{\GW}{\mathsf{GW}}

\newcommand{\Cut}{\operatorname{Cut}}


\def\E{ {\mathbb E }}
\def\P{ {\mathbb P }}

\endlocaldefs

\begin{document}

\begin{frontmatter}
\title{The Horton--Strahler number of Galton--Watson trees with possibly infinite variance}
\runtitle{Horton--Strahler number of GW trees with infinite variance}

\begin{aug}
\author[A]{\fnms{Robin}~\snm{Khanfir}\ead[label=e1]{robin.khanfir@mcill.ca}},
\address[A]{Department of Mathematics and Statistics, McGill University\printead[presep={,\ }]{e1}}

\end{aug}

\begin{abstract}
The Horton--Strahler number, also known as the register function, provides a tool for quantifying the branching complexity of a rooted tree. We consider the Horton--Strahler number of critical Galton--Watson trees conditioned to have size $n$ and whose offspring distribution is in the domain of attraction of an $\alpha$-stable law with $\alpha \! \in \! [1,2]$. We give tail estimates and when $\alpha \! \neq \! 1$, we prove that it grows as $\frac{1}{\alpha}\log_{\alpha/(\alpha-1)}n$ in probability. This extends the result of Brandenberger, Devroye \& Reddad dealing with the finite variance case for which $\alpha \! =\!  2$. We also characterize the cases where $\alpha \! =\! 1$, namely the spectrally positive Cauchy regime, which exhibits more complex behaviors. Our proofs are new and probabilistic; they relate the Horton--Strahler number with other shape parameters such as the height or largest degree.
\end{abstract}

\begin{keyword}[class=MSC]
\kwd[Primary ]{60C05}
\kwd{60J80}
\kwd{60F05}
\kwd[; secondary ]{05C05}
\kwd{60E07}
\end{keyword}

\begin{keyword}
\kwd{Horton--Strahler number}
\kwd{Register function}
\kwd{Galton--Watson trees}
\kwd{tail estimates}
\kwd{stable laws}
\end{keyword}

\end{frontmatter}


\section{Introduction}
\label{introsec}
The Horton--Strahler number of a finite rooted tree is an integer that quantifies its branching complexity. One possible formal definition is given below. 
\begin{definition}
\label{defHS} Let $t$ be a finite rooted tree. Its \emph{Horton--Strahler number} $\HS(t)$ is recursively defined as follows. 
\begin{longlist}
\item[$(a)$] If $t$ reduces to a single node, then $\HS(t)=0$.

\item[$(b)$] Otherwise, $\HS(t)$ is the maximum of the Horton--Strahler numbers of the subtrees $t_1,...,t_k$ that are attached to the root, plus one if that maximum is not uniquely achieved:
\[\HS(t)=\max_{1\leq i\leq k}\HS(t_i)+\un_{ \big\{\#\argmax_{1\leq i\leq k}\HS(t_i)\geq 2 \big\}} .\]
\end{longlist}
\end{definition}
\noi
Alternatively, $\HS(t)$ is also the height of the largest perfect binary tree that can be embedded into $t$ (see Section~\ref{genHSsec} for more details). In this article, we provide estimates on the Horton--Strahler number of critical Galton--Watson trees conditioned to be large. Before discussing our results precisely, let us provide a brief history with general references on related topics.   

\subsection*{Background} The Horton--Strahler number was introduced independently by the two hydrogeologists Horton~\cite{horton45} and Strahler~\cite{strahler52} to obtain quantitative empirical laws about river systems, that are represented by trees whose leaves are springs and whose root corresponds to the outlet of the basin. Many key physical characteristics of stream networks have been since modeled with the help of this number: see e.g. Peckham~\cite{peckham}, Fac-Beneda~\cite{facbeneda},  Chavan \& Srinivas~\cite{chavan} and Bamufleh et al.~\cite{bamufleh}. The Horton--Strahler number appears independently in other scientific fields (anatomy, botany, molecular biology, physics, social network analysis, etc). In computer science, it is used 
to optimize the amount of memory or time needed to manipulate data structures. It is sometimes called the \emph{register function} in this context because the minimum number of registers needed to evaluate an arithmetic expression $A$ is equal to the Horton--Strahler number of the syntax tree of $A$. We refer to Viennot~\cite{Viennot} for a survey on those various applications.

The Horton--Strahler number is encountered in many areas of mathematics: see for instance Esparza, Luttenberger \& Schlund~\cite{esparza} for connections with mathematical logic, formal language theory, algebra, combinatorics, topology, approximation theory, and more. In the probability area, let us mention Kovchegov \& Zaliapin~\cite{hortonlaws}, which 
considers the Horton--Strahler number through the prism of pruning operations on trees. 
Here, we rather focus on probabilistic works that discuss the Horton--Strahler number of uniform samples of standard families of combinatorial trees. Flajolet, Raoult \& Vuillemin~\cite{flajolet} and Kemp~\cite{kemp}  
consider the Horton--Strahler number of a uniform random ordered rooted binary tree $T_n$ with $n$ leaves (a uniform $n$-\emph{Catalan tree}) and they prove that  
\[ \E[ \HS(T_n)]=\log_4 n+D(\log_4 n)+o(1)\]
as $n\to \infty$, where $\log_b x=\ln x/\ln b$ stands for the logarithm of $x$ to the base $b$, and $D$ is a $1$-periodic continuous function. Here, all the random variables that we consider are defined on the same probability space $(\Omega, \mathscr F, \P)$ whose expectation is denoted by $\E$. In particular, $\HS(T_n)$ is subject to deterministic oscillations. Moreover, Devroye \& Kruszewski~\cite{devroye95} proved that $\HS(T_n)$ is highly concentrated around its expected value via exponential tail estimates. These results were extended to $k$-ary trees by Drmota \& Prodinger~\cite{drmota}. 

More recently, Brandenberger, Devroye \& Reddad~\cite{brandenberger} showed that the Horton--Strahler number of a critical Galton--Watson tree with finite variance offspring distribution conditioned to have $n$ vertices always grows as $\log_4 n$ in probability, which extends all the results that have been previously obtained on first-order behavior. In a companion paper \cite{Kha24}, we go further and study, among other things, the fluctuations and deterministic oscillations of the Horton--Strahler number of large Catalan trees (see Section~\ref{sec:open} for more discussions).

\subsection*{Framework and main results}
Let us give a precise overview of the present article which provides tail estimates and characterizes the first-order behavior of the Horton--Strahler number of critical Galton--Watson trees conditioned to have size $n$ and whose offspring distribution is in the domain of attraction of an $\alpha$-stable law with $\alpha\in[1,2]$. This framework extends the finite variance case, where $\alpha= 2$, and it includes the so-called spectrally positive Cauchy (or $1$-stable) laws. To that end, let us introduce our basic notation and assumptions.

Throughout this work, $\mu=\left(\mu(k)\right)_{k\in\mathbb{N}}$ stands for a probability measure on the set $\mathbb{N}$ of nonnegative integers. We shall always assume $\mu$ to be \emph{critical} and \emph{non-trivial}, namely that
\begin{equation}
\label{critimu}
\sum_{k\in\mathbb{N}}k\mu(k)=1 \quad \text{ and } \quad \mu(0)>0.
\end{equation} 
We view it as the \emph{critical offspring distribution} of a (rooted and ordered) Galton--Watson tree denoted by $\tau$, which is then almost surely finite. See Section~\ref{GWsec} for a formal definition. Several results are expressed in terms of the following strictly convex functions:
\begin{equation}
\label{psi_def}
\forall s\in[0,1],\quad \varphi(s)=\sum_{k\in \bbN}s^k\mu(k)\quad\text{ and }\quad\psi(s)=\varphi(1-s)-(1-s).
\end{equation} 

\smallskip

Our first contribution consists in four propositions that connect $\HS(\tau)$ to other simple characteristics of $\tau$, under the sole assumption that $\mu$ is critical and non-trivial. More precisely, 
\begin{itemize}
\item Proposition~\ref{first_moment_size_general} provides an upper bound for the size $\# \tau$ of $\tau$ when $\HS(\tau)$ is small, 
\item Proposition~\ref{min_hauteur_selon_H_general} provides a lower bound for the height $|\tau|$ of $\tau$ when $\HS(\tau)$ is large, 
\item Proposition~\ref{maj_hauteur_selon_H_general} provides a lower bound for $\HS(\tau)$ when the height $|\tau|$ of $\tau$ is large,
\item Proposition~\ref{min_H_selon_D} gives a lower bound for $\HS(\tau)$ when the maximal out-degree of $\tau$ is large.
\end{itemize}
We will use them to derive the order of magnitude of $\HS(\tau)$ under $\P (\, \cdot \, |\,  \# \tau = n)$. These new estimates are fairly general and are sufficiently accurate to be interesting in their own right.

\smallskip

Our second goal is to estimate the tails of the Horton--Strahler numbers of critical Galton--Watson trees whose offspring distribution has possibly infinite variance. We work under the assumption that 
\begin{equation}
\label{mustable} \textit{$\mu$ belongs to the domain of attraction of a stable law.}
\end{equation}
We denote the scaling index of (the type of) the limiting law by $\alpha$. Since $\mu$ has a finite mean, we get $\alpha \in [1, 2]$  and since $\mu$ is supported on $[0, \infty)$, we only deal with spectrally positive stable laws: namely, their \emph{skewness parameter} $\beta$ is equal to $1$ and the support of their Lévy measure is included in $(0, \infty)$. By standard results, see e.g.~\cite[Theorem 8.3.1]{RegVar}, Assumption (\ref{mustable}) is equivalent to the existence of a function $L : [0, \infty) \mapsto (0, \infty)$ that is \emph{slowly varying} in the sense of Karamata~\cite{karamata1930mode} (which will be recalled at the start of Section~\ref{limtheosec})  and such that
\begin{equation}
\label{tailmu}
\mu\left([n,\infty)\right)\sim n^{-\alpha} L(n) \; \text{ if $\alpha \in [1, 2)$} \quad \text{ and } \quad \sum_{k=0}^n k^2\mu(k) -1\sim 2 L(n)   \; \text{ if $\alpha =2$.} 
\end{equation}
Note that $ \sum_{k=0}^n k^2\mu(k) -1$ is nondecreasing and ultimately positive since $\mu$ satisfies (\ref{critimu}); see Proposition~\ref{limthtpsfix} for more details. 
We first discuss our results in the cases where $\alpha \in (1, 2]$, and then we consider the cases where $\alpha= 1$ that feature more complicated behaviors.

\paragraph*{The cases where $\alpha  \in  (1, 2]$} In these cases, our results are simply expressed in terms of the index $\alpha$ only. First, we prove that if $\mu$ is in the domain of attraction of an $\alpha$-stable law with $\alpha \in (1, 2]$, then the tail of $\HS(\tau)$ follows a universal exponential decay.
\begin{proposition}
\label{HS_tails_alpha}
Assume that $\mu$ is critical and non-trivial and that it belongs to the domain of attraction of a stable law of index $\alpha\in (1,2]$. Then,
\begin{equation}
\label{Ytail_alpha}
- \ln\P(\HS(\tau) > n)\sim n\ln \frac{\alpha}{\alpha-1} .
\end{equation}
\end{proposition}
\noi
This extends the work of Brandenberger, Devroye \& Reddad~\cite[Lemma 2.2]{brandenberger} who proved the $\alpha=2$ case under the assumption that $\mu$ has a finite variance. 

We next discuss the behaviour of $\HS (\tau)$ under $\P (\, \cdot \, | \, \# \tau =n)$ and to that end, assume that 
\begin{equation}
\label{muaperio}
\emph{$\mu$ is aperiodic}
\end{equation} 
(namely that $\mu$ is not supported by a proper additive subgroup of $\mathbb{Z}$) because this implies that $\P(\#\tau=n)>0$ for all $n$ that are large enough. Then, we prove the following. 
\begin{theorem}
\label{stable_HS_order}
Assume that $\mu$ is critical and non-trivial, aperiodic and that it belongs to the domain of attraction of a stable law of index $\alpha\in(1,2]$. Then, the following convergence holds in probability:
\begin{equation}
\label{alphacvs}
\frac{\alpha  \HS(\tau)}{\log_{\frac{\alpha}{\alpha-1}} n}\;  \text{ under $\P (\, \cdot \, |\, \#\tau  =   n)$ }  \longrightarrow  1 . 
\end{equation}
\end{theorem}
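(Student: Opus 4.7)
The convergence \eqref{alphacvs} amounts to establishing, for every $\epsilon > 0$,
\[
\P\bigl(\HS(\tau) \leq \tfrac{1-\epsilon}{\alpha}\log_r n \bigm| \#\tau = n\bigr) \to 0 \quad \text{and} \quad \P\bigl(\HS(\tau) \geq \tfrac{1+\epsilon}{\alpha}\log_r n \bigm| \#\tau = n\bigr) \to 0,
\]
where $r = \alpha/(\alpha-1)$. I would handle the two bounds separately. A common preliminary ingredient is the local limit asymptotic $\P(\#\tau = n) = n^{-1-1/\alpha} L_\sharp(n)$ with $L_\sharp$ slowly varying, available under the aperiodicity assumption \eqref{muaperio} together with \eqref{tailmu} (Otter--Slack, extended to the slowly varying setting by Kortchemski). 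This lets me convert an unconditional estimate $\P(A_n, \#\tau = n) \leq \psi_n$ into a conditional estimate by dividing by a quantity $\asymp n^{-1-1/\alpha}$.

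For the \emph{lower bound}, I would invoke Proposition \ref{first_moment_size_general}, which controls $\#\tau$ when $\HS(\tau)$ is small: it should yield that a tree with $\HS(\tau) \leq h$ has size at most of order $r^{\alpha h}$, up to slowly varying factors. Setting $h = (1-\epsilon)\log_r n / \alpha$ makes this typical size $n^{1-\epsilon}$, so the event $\{\HS(\tau) \leq h, \#\tau = n\}$ is rare. Comparison with $\P(\#\tau = n)$ then gives the desired vanishing, provided the estimate from Proposition \ref{first_moment_size_general} is sufficiently sharp (see below).

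For the \emph{upper bound}, I would rely on Proposition \ref{min_hauteur_selon_H_general}: if $\HS(\tau) \geq h$, then $|\tau|$ is forced to be at least of order $r^{(\alpha-1)h}$ up to slowly varying corrections. At the threshold $h = (1+\epsilon)\log_r n / \alpha$ this gives $|\tau| \gtrsim n^{(1-1/\alpha)(1+\epsilon)}$, exceeding the typical conditional height $n^{1-1/\alpha}$ by a polynomial factor. Standard upper-tail estimates on the height of a critical $\alpha$-stable Galton--Watson tree conditioned on size --- available via convergence of the rescaled Lukasiewicz or height process to the normalised $\alpha$-stable excursion --- then yield the conclusion. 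A naive use of Proposition \ref{HS_tails} is too weak here: $\P(\HS(\tau) \geq h) \asymp r^{-h} = n^{-(1+\epsilon)/\alpha}$ at our threshold, whereas dividing by $\P(\#\tau=n)$ multiplies by $n^{1+1/\alpha}$, leaving a positive power of $n$.

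The \textbf{main obstacle} is the lower bound: Proposition \ref{first_moment_size_general} must be quantitative enough to absorb the $n^{1+1/\alpha}$ blow-up from the local limit theorem, since a crude first-moment/Markov bound gives only $r^{\alpha h}/n = n^{-\epsilon}$ unconditionally and hence $n^{1+1/\alpha-\epsilon}$ conditionally, which does not vanish. The required refinement should come from a direct analysis of $G_h(s) = \E[s^{\#\tau} \I{\HS(\tau) \leq h}]$. A short decomposition argument on the root children yields the Newton-iteration identity
\[
G_h(s) - G_{h-1}(s) = \frac{s\varphi(G_{h-1}(s)) - G_{h-1}(s)}{1 - s\varphi'(G_{h-1}(s))},
\]
which, combined with the stable asymptotic $\psi(u) \sim u^\alpha L(1/u)$ as $u \downarrow 0$ and standard singularity analysis, should deliver the sharp joint dependence of $\P(\#\tau = n, \HS(\tau) \leq h)$ on $n$ and $h$ required to close the lower bound.
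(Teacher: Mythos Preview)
Your plan inverts the paper's strategy: you propose to use Proposition~\ref{first_moment_size_general} for the lower bound and Proposition~\ref{min_hauteur_selon_H_general} for the upper bound, whereas the paper does the opposite, and this inversion creates a genuine gap in your upper bound.

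\medskip

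\textbf{Upper bound.} Your sentence ``if $\HS(\tau)\geq h$, then $|\tau|$ is forced to be at least of order $r^{(\alpha-1)h}$'' overstates what Proposition~\ref{min_hauteur_selon_H_general} gives: it is a \emph{distributional} lower bound on $Z(\tau)\leq |\tau|$ under $\P(\,\cdot\mid \HS(\tau)=h)$, not a deterministic one, and it does not transfer to $\P(\,\cdot\mid \#\tau=n)$ by a crude ratio. Concretely, with $h=(1+\varepsilon)\tfrac{1}{\alpha}\log_r n$ and $M=Kn/a_n$, one has $\P(\HS(\tau)\geq h,\, Z(\tau)\leq M)\leq (M+1)\sum_{h'\geq h}\psi(q_{h'-1})$ which is of order $n^{1-1/\alpha}\cdot n^{-(1+\varepsilon)}$ up to slowly varying factors; dividing by $\P(\#\tau=n)\asymp n^{-1-1/\alpha}$ leaves $n^{1-\varepsilon}$, which does not vanish. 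You correctly noted that dividing the raw tail $q_h$ by $\P(\#\tau=n)$ fails for the same reason; the extra factor $M\psi'(q_{h-1})$ you gain from Proposition~\ref{min_hauteur_selon_H_general} is only $n^{-\varepsilon(1-1/\alpha)}$, not enough to kill $n^{1+1/\alpha}\,q_h\asymp n$. The paper's remedy is different and does not use Proposition~\ref{min_hauteur_selon_H_general} at all: the crude ratio \emph{does} work against $\P(\#\tau\geq n)\asymp n^{-1/\alpha}$, so one first proves $\P(\HS(\tau)\geq h\mid \#\tau\geq n)\to 0$, then transfers this to $\{\#\tau=n\}$ via Kortchemski's absolute continuity (Proposition~\ref{height_stable_cond}$(ii)$). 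The transfer only applies to events measurable with respect to the first $\lfloor rn\rfloor$ steps of the Lukasiewicz path, which is why the mirror-image inequality of Lemma~\ref{left_right_HS} is needed to reduce $\HS(\tau)$ to $\HS(R_m\tau)$ and $\HS(R_m\tau^\star)$.

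\medskip

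\textbf{Lower bound.} You correctly identify that a first-moment Markov bound from Proposition~\ref{first_moment_size_general} is too weak, and your Newton-iteration identity for $G_h(s)$ is valid. A singularity analysis might succeed but is a substantial undertaking that you have not carried out. The paper avoids this entirely by going through the height: Proposition~\ref{height_stable_cond}$(i)$ ensures $|\tau|\geq \eta\, n/a_n$ with high $\P(\,\cdot\mid\#\tau=n)$-probability, and Corollary~\ref{maj_hauteur_selon_H_stable} (derived from Proposition~\ref{maj_hauteur_selon_H_general}, not \ref{first_moment_size_general}) gives $\P(\HS(\tau)\leq m\mid |\tau|\geq \eta\, n/a_n)\leq C_\mu\exp(-\tfrac{\eta}{8}\tfrac{n}{a_n}\psi'(q_m))$. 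At $m=(1-\varepsilon)\tfrac{1}{\alpha}\log_r n$ the exponent behaves like $n^{\varepsilon(1-1/\alpha)+o(1)}$, so the decay is stretched-exponential and comfortably beats the polynomial blow-up $1/\P(\#\tau=n)$.
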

\noi
This extends the work of Brandenberger, Devroye \& Reddad~\cite[Theorem 1.1]{brandenberger} who proved the $\alpha = 2$ case under the assumption that $\mu$ has a finite variance. Let us also mention that (\ref{alphacvs}) holds true when $\mu$ is not aperiodic by restricting to the $n$ such that $\P(\#\tau=n)>0$.

Our strategy to prove Theorem~\ref{stable_HS_order} is new and involves linking $\HS(\tau)$ with the height of $\tau$ with Proposition~\ref{maj_hauteur_selon_H_general}. We then rely on a limit theorem of Duquesne~\cite{duquesne_contour_stable} on the height of $\alpha$-stable trees, and control the local conditioning $\P (\, \cdot \, | \, \# \tau =n)$ with an absolute continuity result of Kortchemski~\cite{kortchemski_simple} on positive excursions of random walks. These results are recalled as Proposition~\ref{height_stable_cond} in Section~\ref{limtheosec}.

\paragraph*{The $1$-stable cases} In these more complex cases, we need to specify a converging sequence of rescaled centered sums of $\mu$-distributed independent random variables. Let $(W_n)_{n\in \mathbb{N}} $ be a (left-continuous) random walk starting at $W_0= 0$, whose jump law is given by 
\begin{equation}
\label{RWW}
\P(W_1= k)= \mu (k+1), \quad k\in \{ -1\} \cup \mathbb{N}.
\end{equation} 
Note that $\E [W_1] = 0$. Then by standard results (see e.g.~\cite[Chapters IX.8 and XVII.5]{feller1971}), it holds that $\mu$ belongs to the domain of attraction of a $1$-stable law if and only if there exists a $(0, \infty)$-valued 
sequence $(a_n)_{n\in \mathbb{N}}$ tending to $\infty$ such that 
\begin{equation}
\label{1stableexpl} 
\frac{W_n+b_n}{a_n} \; \xrightarrow[n\to \infty]{\text{(law)}}X, \quad  \text{ where $\: b_n =  n\E [W_1 \un_{\{ |W_1| >a_n\}} ]\:$ and $\: \E [ e^{-\lambda X} ] = e^{\lambda \ln \lambda}$}    
\end{equation}
for all $n\in\bbN$ and $\lambda\in(0,\infty)$; we refer to Proposition~\ref{limthCauchy1} for details. Let us already mention here that $a_n \sim n L(a_n)$ and that $b_n/ a_n \to \infty$, necessarily. Also, the law of $X$ is a \emph{spectrally positive Cauchy (or $1$-stable) law}; its Fourier transform is given by $\E [ \exp (iu X) ]= \exp (- \tfrac{\pi}{2} |u| - iu \ln |u| )$, $u\in \mathbb{R}$.

The asymptotic behavior of $\HS(\tau)$ in the $1$-stable cases is expressed in terms of the sequence $(b_n)_{n\in \mathbb N}$ and the following function $\Upsilon$ that is derived from $\psi $ in (\ref{psi_def}) as follows:
\begin{equation}
\label{Ups} 
\forall s\in (0, 1), \quad \Upsilon(s)=\int_s^1\frac{\dd r}{r\ln\Lambda(r)}, \quad \text{ where } \quad  \Lambda(s)=\frac{s\psi'(s)}{s\psi'(s)-\psi(s)}.
\end{equation}
We refer to Proposition~\ref{psi'(s)_psi(s)/s} for some basic properties of $\Lambda$ and $\Upsilon$. If $\mu$ is in the domain of attraction of a $1$-stable law, we can provide an estimate for the tail of $\HS(\tau)$ in terms of $\Upsilon$. 
\begin{proposition}
\label{HS_tails_1}
Assume that $\mu$ is critical and non-trivial and that it belongs to the domain of attraction of a $1$-stable law. Then,
\begin{equation}
\label{Ytail_1}
\Upsilon \big( \P (\HS(\tau) > n ) \big)\sim n.
\end{equation}
\end{proposition}
\noi
In contrast to the cases where $\alpha  \in  (1, 2]$ for which $\Upsilon (s) \sim_{0^+}  \log_{\frac{\alpha}{\alpha-1}} 1/s$, the asymptotic behavior of $\Upsilon$ when $\alpha= 1$ depends on the slowly varying function $L$ appearing in (\ref{tailmu}): see Lemmas~\ref{behavior_Lambda_alpha} and \ref{behavior_Lambda_1} (also Proposition~\ref{Ups_SV}). For example, Section~\ref{sec:ex} derives the following.

\begin{itemize}
\item[$(\mathbf a)$] If $L(n) \! \sim \! (\ln n)^{-1-\kappa}$ with $\kappa\! \in \! (0, \infty)$, then it holds that
$\Upsilon (s)  \sim_{0^+}  \frac{\ln 1/s}{ \ln \ln 1/s}$ and thus $ -\! \ln \P (\HS(\tau) >  n) \sim   n \ln n $.

\item[$(\mathbf b)$]  If $L(n) \! \sim \! \exp (-(\ln n)^{\kappa})$ with $\kappa\! \in \! (0, 1)$, then 
$\Upsilon (s)  \sim_{0^+}  \frac{1}{1-\kappa} \frac{\ln 1/s}{ \ln \ln 1/s}$ and therefore $ -\! \ln \P (\HS(\tau) > n) \sim   (1\! -\! \kappa )  n \ln n $.

\item[$(\mathbf c)$] If $L(n)\!\sim\!\exp(-\ln n/\ln \ln n)$ then $\Upsilon (s) \sim_{0^+}  \frac{\ln 1/s}{\ln  \ln \ln 1/s}$ and
$ -\! \ln \P (\HS(\tau) >  n)\sim n \ln \ln n $.
\end{itemize}

When the Galton--Watson tree $\tau$ is conditioned to be large, the size of its Horton--Strahler number is of order 
$\Upsilon(\frac{1}{b_n} )$ as proved by the following theorem that first handles the case where $\tau$ is conditioned to have at least $n$ vertices.  
\begin{theorem}
\label{cauchy_HS_order_atleast}
Assume that $\mu$ is critical and non-trivial and that it belongs to the domain of attraction of a $1$-stable law. 
Then, the following convergence holds in probability:
\[\frac{\HS(\tau)}{\Upsilon(\frac{1}{b_n})} \; \text{ under $\P (\, \cdot \, |\, \# \tau \geq  n)$ }  \longrightarrow 1,\]
where $\Upsilon$ is given by (\ref{Ups}) and $(b_n)$ by (\ref{1stableexpl}). Moreover, $\Upsilon(\frac{1}{b_n}) = o(\ln n)$ and $\ln\ln n = o\big(\Upsilon(\frac{1}{b_n})\big)$.
\end{theorem}
\noi
The main idea behind the proof of Theorem~\ref{cauchy_HS_order_atleast} (and of Theorem~\ref{cauchy_HS_order_equal} below) is new and consists in linking $\HS(\tau)$ with the largest out-degree of $\tau$ with Proposition~\ref{min_H_selon_D}. The proof is also supported by several technical results of Kortchemski \& Richier~\cite{KRcauchy} and of Berger~\cite{berger_cauchy} that specify the asymptotic behavior of positive excursion of the random walk $(W_n)_{n\in \mathbb N}$. They are recalled precisely in Section~\ref{limtheosec}, as Lemma~\ref{numtau1sta} and Propositions~\ref{KRs} and \ref{KRvervaat}.

As discussed by Kortchemski \& Richier~\cite{KRcauchy} and Berger~\cite{berger_cauchy}, it is not clear how one could control $\tau$ under $\P (\, \cdot \, |\, \# \tau = n)$ by assuming only that $\mu\left([n,\infty)\right)\sim L(n)/n$ as in (\ref{tailmu}). 
Here, we work under the stronger assumption that $\mu(n)\sim L(n)/n^2$, which implies the previous one and also implies that $\mu$ is non-trivial and aperiodic.  

\begin{theorem}
\label{cauchy_HS_order_equal}
If $\mu$ is critical and if there is a slowly varying function $L$ such that 
\begin{equation}
\label{H1_loc}
\mu(n)\sim\frac{L(n)}{n^2},
\end{equation}
then the following convergence holds in probability:
\[\frac{\HS(\tau)}{\Upsilon(\frac{1}{b_n})} \; \text{ under $\P (\, \cdot \, |\, \# \tau = n)$ }\longrightarrow 1,\]
where $\Upsilon$ is given by (\ref{Ups}) and $(b_n)$ by (\ref{1stableexpl}). Moreover, $\Upsilon(\frac{1}{b_n}) = o(\ln n)$ and $\ln\ln n = o\big(\Upsilon(\frac{1}{b_n})\big)$.
\end{theorem}
As already mentioned, when $\alpha = 1$, the rescaling sequence $\Upsilon(\frac{1}{b_n})$ depends on the slowly varying function $L$ appearing in (\ref{tailmu}). For example, we derive the following in  Section~\ref{sec:ex}.
\begin{itemize}
\item[$(\mathbf a)$] If $L(n)  \sim  (\ln n)^{-1-\kappa}$ with $\kappa\! \in \! (0, \infty)$, then 
$\Upsilon(\frac{1}{b_n})  \sim  \frac{\ln n}{ \ln \ln n}$. 
\item[$(\mathbf b)$] If $L(n)  \sim  \exp (-(\ln n)^{\kappa})$ with $\kappa \in  (0, 1)$, then 
$\Upsilon(\frac{1}{b_n})  \sim  \frac{1}{1-\kappa}\cdot\frac{\ln n}{ \ln \ln n}$. 
\item[$(\mathbf c)$] If $L(n)\sim\exp(-\ln n/\ln \ln n)$, then 
$\Upsilon(\frac{1}{b_n})  \sim  \frac{\ln n}{ \ln \ln \ln n}$. 
\end{itemize}

\subsection*{Organisation of the paper} In Section~\ref{framework}, we properly set our framework and recall from previous works the tools that we use later on in the paper: Section~\ref{GWsec} is devoted to Galton--Watson trees and Section~\ref{limtheosec} to known limit theorems. In Section~\ref{distribution_HS}, we study the law of the Horton--Strahler number of critical Galton--Watson trees in general: we present alternative definitions and establish a new deterministic technical result (Lemma~\ref{left_right_HS}) in Section~\ref{genHSsec}, and derive a recursive equation for the tails in Section~\ref{sec:recursive}. Section~\ref{estimates_tech} focuses on showing Propositions~\ref{first_moment_size_general}, \ref{min_hauteur_selon_H_general}, \ref{maj_hauteur_selon_H_general}, and \ref{min_H_selon_D} that link the Horton--Strahler number to size, height, and maximal out-degree. Section~\ref{pfThmsec_alpha} is devoted to the proof of our main results in the cases where the index of stability $\alpha$ belongs to $(1,2]$: Proposition~\ref{HS_tails_alpha} in Section~\ref{tailHSsec_alpha} and Theorem~\ref{stable_HS_order} in Section~\ref{pfThm1sec}. Section~\ref{pfThmsec_1} studies the cases where $\alpha=1$: we show Proposition~\ref{HS_tails_1} in Section~\ref{tailHSsec_1}, Theorem~\ref{cauchy_HS_order_atleast} in Section~\ref{pfThm2sec}, and Theorem~\ref{cauchy_HS_order_equal} in Section~\ref{pfThm3sec}. We discuss some examples in Section~\ref{sec:ex}. Finally, Section~\ref{sec:open} contains some future research directions opened by our work.

\section{Framework and tools}
\label{framework}
In this section, we recall a set of well-known results that are used in the rest of the article and in the proofs of 
Theorems~\ref{stable_HS_order}, \ref{cauchy_HS_order_atleast} and \ref{cauchy_HS_order_equal}. With the exception of  Lemma~\ref{numtau1sta}, this section contains no new result.

\subsection{Galton--Watson trees}
\label{GWsec}

\paragraph*{Rooted ordered trees} We recall the standard Ulam formalism on rooted ordered trees (see e.g.~\cite{neveu,kesten86,legall_trees}). Let $\mathbb{N}^*=\{1,2,3,...\}$ be the set of positive integers and let $\mathbb{U}$ be the following set of finite words 
\[\mathbb{U}=\bigcup_{n\in\mathbb{N}}(\mathbb{N}^*)^n\quad \text{ with the convention }(\mathbb{N}^*)^0=\{\varnothing\}.\]
The set of words $\mathbb U$ is totally ordered by the \emph{lexicographic order} denoted by $\leq$. For $u=(u_1,...,u_n)\in\mathbb{U}$ and $v=(v_1,...,v_m)\in\mathbb{U}$, we write $u*v=(u_1,...,u_n,v_1,...,v_m)\in\mathbb{U}$ for the \emph{concatenation} of $u$ and $v$. We denote by $|u|=n$ the \emph{height} of $u$, and if $n\geq 1$ then we denote by $\overleftarrow{u}=(u_1,...,u_{n-1})$ the \emph{parent} of $u$. We also say that $u$ is a \emph{child} of $v$ when $\overleftarrow{u}=v$. The \emph{genealogical order} $\preceq$ is a partial order on $\mathbb{U}$ defined by $u\preceq v\Longleftrightarrow \exists u'\in\mathbb{U},\ v=u*u'$. When $u\preceq v$, we will say that $u$ is an ancestor of $v$. When $u\preceq v$ but $u\neq v$, we write $u\prec v$. Finally, we denote by $u\wedge v\in\mathbb{U}$ the \emph{most recent common ancestor} of $u$ and $v$, that is their common ancestor with maximal height.
\begin{definition}
\label{tree_def}
We say that a subset $t$ of $\mathbb{U}$ is a \emph{tree} when the following is satisfied:
\begin{longlist}
\item[$(a)$] $\varnothing\in t$;
\item[$(b)$] for all $u\in t$, if $u\neq\varnothing$ then $\overleftarrow{u}\in t$;
\item[$(c)$] for all $u\in t$, there exists an integer $k_u(t)\in\mathbb{N}$ such that $ u\! *\! (i) \! \in \! t \Longleftrightarrow 1\! \leq \! i \! \leq \! k_u(t)$.
\end{longlist}
We denote by $\mathbb{T}$ the space of all trees.
\end{definition}
Let $t\in \mathbb{T}$. We view $\varnothing$ as the root of $t$. The size of $t$ is simply the (possibly infinite) number $\# t$ of its vertices and we say that $t$ is finite if $\# t< \infty$.  As a graph, the \emph{edges} of $t$ are given by the unordered pairs $\{ u , \overleftarrow{u} \} $ for all $u \in t\backslash \{ \varnothing\}$. Therefore the \emph{degree} of $u\in t$ is $k_u (t) +1$ if $u\neq \varnothing$ and $k_\varnothing (t)$ otherwise. Namely, $k_u (t)$ is the \emph{out-degree of $u$} (alternatively, if one views $t$ as a family tree whose ancestor is $\varnothing$, then $k_u(t)$ stands for the number of children of $u$). Moreover, the height $|u|$ of a node $u\in t$ is the graph-distance between $u$ and the root $\varnothing$. We use the following notation for the \emph{height of $t$} and its \emph{maximal out-degree}: 
 \begin{equation}
 \label{heightdelta}
 |t|=\max_{u\in t}|u|\quad\text{ and }\quad\Delta(t)=\max_{u\in t}k_u(t).
 \end{equation}
We also denote the subtree stemming from $u\in t$ and the tree pruned at $u$ respectively by 
\[\theta_u t=\{v\in\mathbb{U}\ :\ u*v\in t\}\quad\text{ and }\quad \Cut_u t= t\backslash \{v\in t\ :\ u\prec v\}.\]
Observe that $\theta_u t $ and $\Cut_u t$ both belong to $\mathbb{T}$.

\paragraph*{Galton--Watson trees and the Many-To-One Principle} Here, we adopt the same rigorous definition of Galton--Watson trees as Neveu~\cite{neveu} and refer to \cite{legall_trees,abraham2015introduction} for background about these models. Let us equip the set of trees $\bbT$ with the sigma-field $\ccF(\bbT)$ generated by the sets $\{ t \in \mathbb T : u \in t \}$, where $u$ ranges in $\bbU$. Formally, a \emph{random tree} is a function $\tau : \Omega \to \mathbb T$ that is $(\ccF, \ccF(\bbT))$-measurable. 

\begin{definition}
\label{GWfordef} Let $\mu=(\mu(k))_{k\in \bbN}$ be a probability measure on $\bbN$. A \emph{Galton--Watson tree with offspring distribution $\mu$} (a \emph{$\GW(\mu)$-tree}, for short) is a random tree $\tau$ that satisfies the following properties. 
\begin{longlist}
\item[$(a)$] The random variable $k_\varnothing (\tau)$ has law $\mu$. 
\item[$(b)$]For all $k\ino \bbN^*$ such that $\mu(k)\! >\! 0$, the subtrees $\theta_{(1)} \tau, \ldots , \theta_{(k)} \tau$ under $\P (\, \cdot \, |\,  k_\varnothing (\tau) = k)$ are independent with the same law as $\tau$ under $\P$.
\end{longlist}
\end{definition}
It is well-known that a $\GW(\mu)$-tree $\tau$ is almost surely finite if and only if its offspring distribution is subcritical or critical and non-trivial, namely if (\ref{critimu}) holds, and in that case for all finite trees $t\in \mathbb T$,
\begin{equation}
\label{lawGW}
\P (\tau = t)= \prod_{u\in t} \mu \big( k_u(t) \big).
\end{equation}

As observed by Kesten~\cite{kesten86}, a critical 
$\GW(\mu)$-tree conditioned to be large locally converges in law 
to a tree $\tau_\infty$ with a single infinite line of descent and whose law can be informally described as follows:   
all individuals of $\tau_\infty$ reproduce independently, the individuals of the infinite line of descent 
reproduce according to the $\mu$-size-biased distribution $\left(k\mu(k)\right)_{k\in\mathbb{N}}$ whereas the others reproduce according to $\mu$. More precisely, we introduce the following. 
\begin{definition}
\label{IGWdef}
Let $\mu$ be a critical and non-trivial offspring distribution. A \emph{size-biased $\GW(\mu)$-tree} is a random tree 
$\tau_\infty$ that satisfies the following properties.
\begin{longlist}
\item[$(a)$] For all $n\in \mathbb N$, there is a unique $u\in \tau_\infty$ such that $|u|= n$ and $\# (\theta_u \tau_\infty)= \infty$. Denote this vertex by $U_n$. Note that $U_0= \varnothing$ and that $\overleftarrow{U_{n+1}}=U_n$. Hence, there is a $\mathbb N^*$-valued sequence of random variables $(J_n)_{n\in \mathbb N^*}$ such that $U_n$ is the word $( J_1, \ldots, J_n)$ for all $n\in \mathbb N^*$. 
\item[$(b)$] The random variables $(k_{U_n} (\tau_\infty) , J_{ n+1})$, for $n\!\in\! \mathbb N$, are i.i.d~and distributed as follows:
\[\forall j,k\in \mathbb N^*,\quad\P (J_{n+1}=j \, ; \, k_{U_n} (\tau_\infty) =k) = \I{ j\leq k}  \mu(k).\]
\item[$(c)$]  Conditionally given $(k_{U_n}(\tau_\infty),J_{n+1})_{n\in \mathbb N}$, the finite subtrees stemming from the infinite line of descent, i.e.~the $\theta_{U_n\ast (j)} \tau_{\infty}$ for $j \! \in \! \{1,\ldots ,k_{U_n}(\tau_\infty)\}\backslash\{J_{n+1}\}$ and $n\! \in \!  \mathbb N$, are independent $\GW(\mu)$-trees.
\end{longlist}
\end{definition}

\begin{remark}
\label{jntlw} Note that the individual $U_{n+1}$ on the infinite line of descent has $J_{n+1}-1$ siblings strictly on the left-hand side and $ k_{U_n} (\tau_\infty) -J_{n+1}$  siblings strictly on the right-hand side. Their joint law is given by the following bivariate generating function:
\begin{equation}
\label{bivari}
\forall s, r \in [0, 1],\quad \E \big[r^{J_{n+1}-1} s^{k_{U_n} (\tau_\infty) -J_{n+1}} \big]= \frac{\varphi (r) -\varphi (s)}{r-s},
\end{equation}
where $\varphi(s)=\sum_{k\in\bbN}s^k\mu(k)$ as in (\ref{psi_def}) and where the quotient is equal to $\varphi'(r)$ when $r= s$.
\end{remark}
As mentioned above, size-biased trees are the local limits of critical Galton--Watson trees conditioned to be large and therefore appear in many results concerning the asymptotic behavior of branching processes: we refer to Lyons, Pemantle \& Peres~\cite{LlogLcriteria}, Aldous \& Pitman~\cite{aldouspitman98} and Abraham \& Delmas~\cite{AD2014} for general results in this vein. One key tool involving size-biased $\GW(\mu)$-trees is 
the so-called \emph{Many-To-One Principle}, which is part of folklore (see e.g.~Duquesne~\cite[Equation (24)]{duquesne09} for a proof) and which we use here in the form below.
\begin{proposition}[Many-To-One Principle]
\label{many-to-one}
Let $\tau$ be a $\GW(\mu)$-tree and let $\tau_\infty$ be a size-biased $\GW(\mu)$-tree. We keep the notation of Definition~\ref{IGWdef}. 
Then, for all $n\in \mathbb N$ and for all bounded functions $G_1:\mathbb{T}\times\mathbb{U}\longrightarrow\mathbb{R}$ and $G_2:\mathbb{T}\longrightarrow\mathbb{R}$, it holds that 
\[\E\bigg[\sum_{u\in\tau}\un_{\{ |u|=n\} }G_1\big(\Cut_u\tau,u\big)\, G_2\big(\theta_u\tau\big)\bigg]=\E\big[ G_1\big(\Cut_{U_n}\tau_\infty,U_n\big)\big] \,\E\big[G_2\big(\tau\big)\big].\]
\end{proposition}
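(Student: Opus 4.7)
The plan is to reduce both sides of the identity to the same explicit sum of products of $\mu$-weights, using the defining formula (\ref{lawGW}) for the left-hand side and Definition \ref{IGWdef} for the right-hand side. Fix $u=(u_1,\ldots,u_n)\in\bbU$ and a finite tree $t'\in\bbT$ in which $u$ is a leaf. First I would observe that any $\tau\in\bbT$ with $\Cut_u\tau=t'$ is obtained by grafting some $s\in\bbT$ at the vertex $u$, so by (\ref{lawGW})
\[\P(\Cut_u\tau=t',\,\theta_u\tau=s)=\prod_{v\in t',\,v\neq u}\mu(k_v(t'))\cdot\prod_{w\in s}\mu(k_w(s)).\]
Summing over $s\in\bbT$ and using the identity $\sum_{s\in\bbT}\prod_{w\in s}\mu(k_w(s))=1$, which holds by criticality (\ref{critimu}), yields $\P(\Cut_u\tau=t')=\prod_{v\in t',\,v\neq u}\mu(k_v(t'))$ and shows simultaneously that, on the event $\{u\in\tau\}$, the subtree $\theta_u\tau$ is independent of $\Cut_u\tau$ and distributed as $\tau$. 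Summing over $u\in\bbU$ with $|u|=n$ and taking expectations, the left-hand side factorises as
\[\Bigg(\sum_{|u|=n}\;\sum_{t':\,u\text{ leaf of }t'}G_1(t',u)\prod_{v\in t',\,v\neq u}\mu(k_v(t'))\Bigg)\cdot\E[G_2(\tau)].\]

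Next, I would identify the parenthesised factor with $\E[G_1(\Cut_{U_n}\tau_\infty,U_n)]$. Using Definition \ref{IGWdef}, the joint law of $(\Cut_{U_n}\tau_\infty,U_n)$ decomposes along the spine: for $u\in\bbU$ with $|u|=n$ and $t'$ in which $u$ is a leaf, writing $U_i=(u_1,\ldots,u_i)$ for the deterministic prefix of $u$,
\[\P(\Cut_{U_n}\tau_\infty=t',\,U_n=u)=\prod_{i=0}^{n-1}\Bigg(\mu(k_{U_i}(t'))\!\!\!\prod_{j\in\{1,\ldots,k_{U_i}(t')\}\setminus\{u_{i+1}\}}\!\!\!\P(\tau=\theta_{U_i\ast(j)}t')\Bigg),\]
by (\ref{spindistr}) for the spine factors and by Definition \ref{IGWdef}$(c)$ for the independent sibling $\GW(\mu)$-subtrees. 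Expanding each $\P(\tau=\cdot)$ by (\ref{lawGW}) contributes one factor $\mu(k_v(t'))$ for every vertex $v\in t'$ lying in a sibling subtree along the spine; combined with the explicit spine factors $\mu(k_{U_i}(t'))$ for $i=0,\ldots,n-1$, this produces exactly one factor $\mu(k_v(t'))$ for every $v\in t'\setminus\{u\}$, with no factor at $u$ since $u$ is a leaf of $t'$. This matches the parenthesised sum above, which concludes the identity.

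Nothing in this argument is genuinely hard: the only part requiring some care is the last combinatorial bookkeeping that the $\mu$-weights coming from the size-biased vertices $U_0,\ldots,U_{n-1}$ and from the independent $\GW(\mu)$ sibling subtrees assemble into exactly $\prod_{v\in t',v\neq u}\mu(k_v(t'))$, while criticality (\ref{critimu}) is used only once, to guarantee that $\sum_{s\in\bbT}\prod_{w\in s}\mu(k_w(s))=1$ in the first step.
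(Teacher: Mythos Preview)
Your argument is correct and is essentially the standard direct computation. Note that the paper does not actually prove this proposition: it is stated as ``part of folklore'' with a reference to Duquesne \cite[Equation (24)]{duquesne09}, so there is no proof in the paper to compare against. Your approach---decomposing $\tau$ at a fixed vertex $u$ into $(\Cut_u\tau,\theta_u\tau)$ via the product formula (\ref{lawGW}), and then matching the resulting weight $\prod_{v\in t',\,v\neq u}\mu(k_v(t'))$ with the spine decomposition of $\tau_\infty$ from Definition \ref{IGWdef}---is exactly the standard one. One cosmetic remark: what you call ``criticality'' in the first step is really the almost-sure finiteness of $\tau$, which uses both parts of (\ref{critimu}) (criticality \emph{and} non-triviality $\mu(0)>0$); this is of course what you mean, but it is worth saying precisely.
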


\paragraph*{Lukasiewicz path associated with a tree} We present here a standard but key combinatorial tool to study Galton--Watson trees via random walks. 
\begin{definition}
\label{Lukadef} Let $t \in  \mathbb T$ be \emph{finite} and 
let $u(t)=(u_j(t))_{0\leq j < \# t}$ be the sequence of its vertices listed in increasing lexicographic order: 
$ u_0 (t)= \varnothing < u_1(t)   < \ldots < u_j (t)< u_{j+1} (t)< \ldots <u_{\# t-1}(t) $.  The sequence $u(t)$ is often called the \emph{depth-first exploration} of $t$. We then define a $\mathbb Z$-valued path $W(t)= (W_j(t))_{0\leq j \leq \# t}$ by setting 
$W_0(t) = 0$ and 
\[W_{j+1} (t) =  W_{j}(t)  + k_{u_{j}(t)} (t) - 1 \]
for all $0\leq j < \#t$. The process $W(t)$ is called the \emph{Lukasiewicz path} of $t$.
\end{definition}
In probability, Lukasiewicz paths originate from queuing systems theory to study the waiting line of a single server subject to the Last-In-First-Out policy and they have been used by Le Gall \& Le Jan~\cite{lejan_legall} to define Lévy trees. In the following lemma, we recall that Lukasiewicz paths are adapted processes that completely encode finite trees and that are particularly well-suited to study Galton--Watson trees (see e.g.~Le Gall~\cite[Proposition 1.1]{legall_trees} and \cite[Corollary 1.6]{legall_trees} for more details). 
\begin{proposition}
\label{luka_GW} Let $t\in \mathbb T$ be finite. Let $\tau$ be a $\GW(\mu)$-tree where $\mu$ is critical and non-trivial. Then the following holds true. 

\begin{itemize}

\item[$(i)$] Lukasiewicz paths provide a one-to-one correspondence between the set of finite trees and the set of finite nonnegative excursions of left-continuous walks, which is defined by
\[\bigcup_{n\in \mathbb N^*}\!\big\{ (w_j)_{0\leq j\leq n} \in \mathbb Z^n\: : \: w_0\! =\!  0, \, w_n \! =\!  -1, \, w_j\geq 0\, \text{ and } \, w_{j+1}\! -\! w_j \! \geq \! -1\,\text{ for all } \, 0\! \leq \! j\! <\!  n  \big\}.\]

\item[$(ii)$] For all $m\in\mathbb{N}$, denote by $R_mt$ the tree $t$ restricted to its $m+1$ first vertices (with respect to the lexicographic order). Namely, 
\begin{equation}
\label{left_portion}
R_m t = \{u_{j}(t)\ :\  0\leq j\leq \min(m,\#t-1)\}
\end{equation}
where $(u_j(t))$ stands for the vertices of $t$ listed in lexicographic order. Then, $R_m t$ is a measurable function 
of $\big( W_j(t) \ ; \ 0 \leq j  \leq  \min(m,\# t) \big)$. 

\item[$(iii)$] Let $(W_n)_{n\geq 0}$ be a $\mathbb Z$-valued random walk starting at $W_0=0$ and whose jumps distribution is given by $\P(W_1=k)=\mu(k+1)$ for $k\in\{-1\}\cup\bbN$ . 
Set $\mathtt{H}_1= \inf \{ j\in \mathbb N\, :\,  W_j = -1\}$, which is an a.s.~finite stopping time since $\E [W_1]=0$. 
Then $\big( W_j (\tau) \big)_{0\leq j\leq \# \tau}$ has the same law as $\big( W_j \big)_{0\leq j\leq \mathtt{H}_1}$. In particular, $\# \tau$ and $\mathtt{H}_1$ have the same law.

\item[$(iv)$] More generally, for all $p\in \mathbb N$, set
\begin{equation}
\label{ladderdef}
\mathtt{H}_0= 0 \quad \text{ and } \quad \mathtt{H}_p = \inf \{ j\in \mathbb N\ :\ W_j= -p \} .
\end{equation}
Then there is an i.i.d.~sequence $(\tau_p)_{p\in \mathbb N}$ of $\GW(\mu )$-trees such that 
\[\forall p\in \mathbb N, \quad  \big(p+W_{\mathtt H_p +j} \big)_{0\leq j\leq \mathtt H_{p+1}-\mathtt H_p} = \big( W_j (\tau_p) \big)_{0\leq j \leq \# \tau_p}  .\]
\end{itemize}
\end{proposition}

\subsection{Limit theorems of the literature}
\label{limtheosec}
In this section we recall $-$ mostly from Bingham, Goldies \& Teugels~\cite{RegVar} and Feller~\cite{feller1971} $-$ limit theorems for sums of i.i.d.~random variables belonging to the domain of attraction of stable laws. We also recall useful limit theorems for Galton--Watson trees and random walks from Berger~\cite{berger_cauchy}, Duquesne~\cite{duquesne_contour_stable}, Kortchemski~\cite{kortchemski_simple}, and Kortchemski \& Richier~\cite{KRcauchy}.

\paragraph*{Regularly and slowly varying functions} Let us begin by recalling the notions of slow and regular variations introduced by Karamata~\cite{karamata1930mode}. A measurable and locally bounded function $\ell:(0,\infty)\to  (0,\infty)$ is \emph{slowly varying} at infinity (resp.~at $0^+$) if  $\ell(cx)\sim \ell (x)$ as $x\to \infty$ (resp.~as $x\to 0^+)$ for all $c\in (0, \infty)$. A function $f:(0,\infty)\to  (0,\infty)$ is \emph{regularly varying of index} $\alpha\in\mathbb{R}$ at $\infty$ (resp.~at $0^+$) 
if there exists a slowly varying function $\ell$ at $\infty$ (resp.~at $0^+$) such that $f(x)=x^\alpha \ell (x)$.  
Below we gather in a single proposition several well-known results on slowly and regularly varying functions that are used in this article.
\begin{proposition}
\label{potter}
Let $\ell$ be a slowly varying function at $\infty$. Then the following holds. 

\begin{itemize}

\item[$(i)$] \emph{(Potter's bound)\ } For all $\varepsilon\in (0, \infty)$ and all $c \in (1, \infty)$, 
there exists $x_0\in (0, \infty)$ such that for all $x\in [x_0, \infty)$ and all $\lambda \in [1, \infty)$, it holds  
\[\frac{1}{c}\lambda^{-\varepsilon}\leq \frac{\ell( x\lambda )}{\ell(x)} \leq c \lambda^\varepsilon .\]
Therefore, $\ln \ell (x)= o (\ln x)$ and if $f$ is regularly varying with index $\rho \in \mathbb R\backslash\{ 0\}$, then $\ln f(x) \sim \rho \ln x $. Moreover, if $x_n\sim y_n\rightarrow\infty$ then $\ell(x_n)\sim \ell(y_n)$ and $f(x_n)\sim f(y_n)$.

\item[$(ii)$] \emph{(Karamata's Abelian Theorem for Tails)\ }  Let $\rho  \in  (0, \infty)$. Then $\int_1^\infty \! y^{-1-\rho}  \ell (y) \,\dd y  <  \infty$ and, as $x\to \infty$, 
\[ \int_x^\infty \!   y^{-1-\rho} \ell (y)\, \dd y  \,  \sim\,    \tfrac{1}{\rho} x^{-\rho} \ell (x)    \quad \text{ and } \quad  \int_1^x \!  y^{\rho-1} \ell (y)\, \dd y  \, \sim \,   \tfrac{1}{\rho} x^{\rho} \ell (x) .  \] 

\item[$(iii)$] Suppose that $\int_1^\infty \! y^{-1}  \ell (y)\, \dd y  < \infty$ and set $\overline{\ell} (x)  =  \int_x^\infty \!  y^{-1} \ell (y) \, \dd y $ for all $x\in(0,\infty)$. Then $\overline{\ell}$ is slowly varying at $\infty$ and $\lim_{x\rightarrow \infty} \overline{\ell} (x) / \ell (x)  =  \infty$. 

\item[$(iv)$] \emph{(Monotone Density Theorem)\ } Let $u: [0, \infty) \to \mathbb R$ be a locally Lebesgue integrable function and set $U(x)= \int_0^x u(y)\, \dd y $ for all $x\geq 0$. Assume that there are $c,\rho\in (0, \infty)$ such that $U(x) \sim c x^\rho \ell(x)$ as $x\to \infty$ and furthermore assume that  $u$ is ultimately monotone. Then $u(x) \sim c\rho x^{\rho-1} \ell(x)$ as $x\to \infty$. 

\item[$(v)$]  \emph{(Karamata's Abelian Theorem for Laplace Transform)\ }  Let $U: [0, \infty)  \to [0, \infty)$ be a measurable function such that $\widehat{U}(\lambda) : =  \lambda \int_0^\infty \! e^{-\lambda x} U(x)\, \dd x  <  \infty$ for all $\lambda  \in  (0, \infty)$. Assume that there are $c \in (0, \infty)$ and $\rho  \in  (-1, \infty)$ such that $U (x)  \sim \frac{c}{\Gamma (1+\rho)} x^\rho \ell (x)$ as $x\to \infty$, then 
$\widehat{U} (\lambda) \sim c\lambda^{-\rho}\,  \ell(1/\lambda)$ as $\lambda \to 0^+$. 
\end{itemize}
\end{proposition}

\begin{proof} For $(i)$, see e.g.~\cite[Theorem 1.5.6]{RegVar}. For $(ii)$, see e.g.~\cite[Theorem 1.5.8 and Proposition 1.5.10]{RegVar}. For $(iii)$, see e.g.~\cite[Theorem 1.5.9b]{RegVar}. For $(iv)$, see e.g.~\cite[Theorem 1.7.2]{RegVar}. For $(v)$, see e.g.~\cite[Theorem 1.7.6]{RegVar}.
\end{proof}

Before moving on, recall from (\ref{critimu}) that a probability measure $\mu$ on $\bbN$ is critical and non-trivial when $\sum_{k\in\bbN} k\mu(k)=1$ and $\mu(0)>0$. Also recall from (\ref{psi_def}) that for all $s\in[0,1]$, we write $\psi(s)=\varphi(1-s)-(1-s)=-(1-s)+\sum_{k\in\bbN}(1-s)^k\mu(k)$ .

\paragraph*{Limit theorems of the literature: the cases where $\alpha \in (1, 2]$}  Here, we present equivalent formulations of the property for $\mu$ to belong to the domain of attraction of a stable law of index $\alpha \in (1, 2]$ (we handle $1$-stable laws separately). 

\begin{proposition}
\label{limthtpsfix} Let $\mu$ be a critical and non-trivial probability measure on $\mathbb N$.  Let $(W_n)_{n\in \mathbb N}$ be a random walk whose jumps distribution is specified in (\ref{RWW}). Let $\alpha \in (1, 2]$. Then, the following assertions are equivalent.

\begin{longlist}
\item[$(a)$] The probability measure $\mu$ belongs to the domain of attraction of an $\alpha$-stable law. 

\item[$(b)$] There exists $L :  (0, \infty)  \rightarrow  (0, \infty)$ that varies slowly at $\infty$ such that if $\alpha \!  \in \! (1, 2)$ then $\mu ([n ,\infty))  \sim  
n^{-\alpha} L(n)$, and if $\alpha \! = \! 2$ then 
$\sum_{0\leq k\leq n} k^2\mu (k) - 1 \sim  2L(n)$ which is ultimately positive since $\sum_{k\in\bbN} k\mu(k)=1$ and $\sum_{k\geq 2}\mu(k)>0$.

\item[$(c)$]  
If $\alpha \!  \in \! (1, 2)$ then $\psi (s) \sim_{0^+} \frac{\alpha -1}{\Gamma (2-\alpha)} s^{\alpha} L(1/s)$,  and if $\alpha \! = \! 2$ then $\psi (s) \sim_{0^+} s^{2} L(1/s)$. 

\item[$(d)$] There exists a $(0, \infty)$-valued sequence $(a_n)_{n\in \bbN}$ tending to $\infty$ such that $\frac{1}{a_n}W_n$ converges in law to the spectrally positive $\alpha$-stable random variable $X_\alpha$ whose Laplace exponent is given for all $\lambda \in [0, \infty)$ by $\ln  \E [ \exp ( -\lambda X_\alpha)]  =   \frac{\alpha -1}{\Gamma (2-\alpha)} \lambda^\alpha\,  $ if $\alpha \!  \in \! (1, 2)$ and by $\lambda^2\, $ if $\alpha \! = \! 2$. 
\end{longlist}
\noi
Moreover, if one of the four equivalent assumptions from above holds true, then 
$a^\alpha _n\sim n L(a_n)$ and $a_n\sim 
n^{1/\alpha} L^*(n)$ where $L^*: x \in  (0, \infty) \mapsto  x^{-1/\alpha} \inf \{ y\! \in \! (0, \infty) : y^\alpha / L(y) \! >\!  x \}$ is slowly varying at $\infty$. 
 \end{proposition}

\begin{proof} For $(a) \Leftrightarrow (b)$, see e.g.~\cite[Theorem 8.3.1]{RegVar}. The equivalence $(a)\Leftrightarrow (d)$ follows from the definition of the domain of attraction of a stable law: the limiting law is necessarily spectrally positive since $\mu$ is supported by $\mathbb N$ and among the spectrally positive $\alpha$-stable types, it is always possible to choose a centered one because $\alpha \in (1, 2]$ (see e.g~\cite[Section 8.3]{RegVar} and \cite[Chapter XVII.5]{feller1971}). 

For $(b) \Leftrightarrow (c)$, see e.g.~\cite[Theorem 8.1.6]{RegVar}. More precisely, recall from (\ref{psi_def}) the definitions of $\varphi$ and $\psi$. Then, for all $\lambda \! \in \! (0, \infty)$, set 
\begin{equation}
\label{DL2}
f_1 (\lambda)\!:= \varphi (e^{-\lambda}) -1 + \lambda  =  \psi (1 - e^{-\lambda})+ \tfrac{1}{2} \lambda^2+O (\lambda^3). 
\end{equation}
If $\alpha \! \in \! (1, 2)$ then $(b)$ is equivalent to $f_1(\lambda) \sim_{0^+} \frac{\alpha -1}{\Gamma (2-\alpha)} \lambda^{\alpha} L( 1/\lambda)$, as asserted by \cite[Theorem 8.1.6]{RegVar}, which implies that $(b) \Leftrightarrow (c)$ in these cases (with Proposition~\ref{potter} $(i)$). 
If $\alpha\! = \! 2$ then \cite[Theorem 8.1.6]{RegVar} asserts that $(b)$ is equivalent to $f_1(\lambda) \sim_{0^+} \lambda^{2}(\frac{1}{2} +L( 1/\lambda))$ and (\ref{DL2}) implies that $(b) \Leftrightarrow (c)$. 

Let us prove the last point of the proposition. We assume that $(a\text{-}d)$ hold true. 
By Grimvall~\cite[Theorem 2.1]{Grim74}, $\lim_{n\to \infty} \E [ \exp (-\frac{\lambda}{a_n} W_n) ]= \E [ e^{-\lambda X_\alpha} ]$ for all $\lambda \in (0, \infty)$. Observe that $ \E [ \exp (-\lambda W_n) ] =  (e^\lambda (f_1 (\lambda) +1 \! -\! \lambda))^n$. If $\alpha \in (1, 2)$, 
we easily get $nf_1 (\lambda /a_n)  \sim \frac{\alpha -1}{\Gamma (2-\alpha)} \lambda^{\alpha} $ as $n\to \infty$ for all $\lambda \in (0, \infty)$ and thus $a^\alpha_n\sim n L(a_n)$. If $\alpha  = 2$, we get 
$$\tfrac{n\lambda}{a_n} + n \ln \big[ 1 -  \tfrac{\lambda}{a_n}\big(1\! -\!  \tfrac{\lambda}{a_n} \big( \tfrac{1}{2} \! + \! L (a_n) \big) (1+ o(1)) \, \big) \big] \sim  \tfrac{n L(a_n)}{a_n^2}  \lambda^2, $$  
which implies that 
$a_n^2 \sim n L(a_n)$. In both cases, we observe that $a_n^\alpha / L(a_n)\sim n$ and we use \cite[Theorem 1.5.12]{RegVar} to complete the proof of the proposition.
\end{proof}

We next recall standard results on the size of Galton--Watson trees, which are expressed in terms of random walks 
thanks to Proposition~\ref{luka_GW}.

\begin{proposition}
\label{lockemp} Let $\mu$ be a critical and non-trivial probability measure on $\mathbb N$.
Let $(W_n)_{n\in \mathbb N}$ be a random walk with $W_0=0$ and whose jumps distribution is given by (\ref{RWW}). 
Recall from (\ref{ladderdef}) the stopping times $(\mathtt H_p)_{p\in \mathbb N}$. Let $\tau$ be a $\GW(\mu)$-tree. Then the following holds true. 
\begin{itemize}

\item[$(i)$] The process $(\mathtt{H}_p)_{p\in \mathbb N}$ is a random walk with positive jumps.  

\item[$(ii)$] \emph{(Kemperman's formula)\ } $\P (\mathtt H_p= n) = \frac{p}{n} \P (W_n  =  -p)$ for all $n,p\in \mathbb N^*$. 

\item[$(iii)$] Suppose that $\mu$ satisfies $(a$-$d)$ in Proposition~\ref{limthtpsfix} and is aperiodic as in (\ref{muaperio}). Then, $\P (\# \tau  =  n)  \sim c_\alpha /(na_n)$ and 
$\P  (\# \tau  \geq  n)  \sim \alpha c_\alpha/a_n$ where $c_\alpha$ is the value at $0$ of the continuous version of the density of $X_\alpha$, and where $(a_n)$ and $X_\alpha$ are as in $(d)$ in Proposition~\ref{limthtpsfix}. 

\item[$(iv)$] Assume that $\mu$ satisfies $(a$-$d)$ in Proposition~\ref{limthtpsfix} . 
Recall from (\ref{heightdelta}) that $|\tau|$ stands for the height of $\tau$. Then, 
\[\frac{\P(|\tau|\geq n)}{\psi\left(\P(|\tau|\geq n)\right)}\sim (\alpha-1)n.\]
\end{itemize}
\end{proposition}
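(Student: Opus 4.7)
Set $u_n = \P(|\tau|\geq n)$ for $n \in \mathbb{N}$, so $u_0 = 1$. The first step is to derive the fundamental recursion. By Definition \ref{GWfordef}, conditioning on $k_\varnothing(\tau)$ and using the branching property,
\[
\P(|\tau| \leq n) = \sum_{k \in \mathbb{N}} \mu(k) \P(|\tau| \leq n-1)^k = \varphi(1 - u_{n-1}),
\]
so by (\ref{psi_def}) one obtains $u_n = u_{n-1} - \psi(u_{n-1})$, that is
\[
u_{n-1} - u_n = \psi(u_{n-1}).
\]
Since $\mu$ is critical and non-trivial, $\tau$ is a.s.~finite and $u_n \downarrow 0$. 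Moreover, $\psi$ is continuous, strictly convex, with $\psi(0)=0$ and $\psi'(s) = 1 - \varphi'(1-s) \geq 0$ on $[0,1]$ (using $\varphi'(1)=1$), hence $\psi$ is nonnegative and increasing on $[0,1]$.

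The second step is to convert the recursion into an asymptotic integral identity. Using the monotonicity of $\psi$, one has the two-sided bound
\[
1 \;=\; \frac{u_{n-1} - u_n}{\psi(u_{n-1})} \;\leq\; \int_{u_n}^{u_{n-1}} \frac{\dd u}{\psi(u)} \;\leq\; \frac{u_{n-1} - u_n}{\psi(u_n)} \;=\; \frac{\psi(u_{n-1})}{\psi(u_n)} \; .
\]
Since $\psi$ is regularly varying of index $\alpha > 1$ at $0^+$ (by Proposition \ref{limthtpsfix} $(c)$), $\psi(u)/u \to 0$ as $u \to 0^+$, which combined with the recursion gives $u_n / u_{n-1} \to 1$; then by uniform convergence for regularly varying functions, $\psi(u_{n-1})/\psi(u_n) \to 1$. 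Hence each term $\int_{u_n}^{u_{n-1}} \dd u / \psi(u)$ tends to $1$. Telescoping and applying Cesàro (the residual integral $\int_{u_1}^1 \dd u/\psi(u)$ is finite because $\psi$ is bounded below by a positive constant on $[u_1,1]$),
\[
\int_{u_n}^{1} \frac{\dd u}{\psi(u)} \;\sim\; n.
\]

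The third step evaluates this integral asymptotically via Karamata. Write $\psi(u) \sim c_\alpha u^\alpha L(1/u)$ as $u \to 0^+$ from Proposition \ref{limthtpsfix} $(c)$, where $c_\alpha = \frac{\alpha-1}{\Gamma(2-\alpha)}$ if $\alpha \in (1,2)$ and $c_\alpha = 1$ if $\alpha = 2$. Substituting $v = 1/u$ in the integral and applying Proposition \ref{potter} $(ii)$ with $\rho = \alpha - 1 > 0$ to the regularly varying integrand $v^{\alpha-2}/L(v)$,
\[
\int_{u_n}^1 \frac{\dd u}{\psi(u)} \;\sim\; \frac{1}{c_\alpha} \int_1^{1/u_n} \frac{v^{\alpha-2}}{L(v)}\, \dd v \;\sim\; \frac{1}{c_\alpha (\alpha-1)} \cdot \frac{u_n^{1-\alpha}}{L(1/u_n)} \;\sim\; \frac{1}{\alpha-1} \cdot \frac{u_n}{\psi(u_n)} \; .
\]
Comparing with the previous step yields $u_n/\psi(u_n) \sim (\alpha - 1) n$, as wanted. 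The main potential obstacle lies in justifying the Cesàro-type step rigorously (namely controlling the passage from $\int_{u_n}^{u_{n-1}} \dd u/\psi(u) \to 1$ to the global equivalent), but this is handled cleanly by the monotonicity of $\psi$ and the regular variation property, which forces $u_n/u_{n-1} \to 1$.
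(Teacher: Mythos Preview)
Your proposal addresses only part $(iv)$ of the proposition; parts $(i)$--$(iii)$ are left untreated. In the paper these are dispatched quickly: $(i)$ by the strong Markov property and left-continuity of $W$, $(ii)$ by reference to Kemperman's formula, and $(iii)$ by combining $(ii)$ with Proposition \ref{luka_GW} $(iii)$, Gnedenko's local limit theorem, and Karamata's theorem. You should at least indicate how these are obtained.

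For $(iv)$ itself, your argument is correct and in fact more informative than what the paper does: the paper simply cites Slack \cite[Lemma 2]{slack68}, whereas you reconstruct the proof in full. The three steps --- the recursion $u_{n-1}-u_n=\psi(u_{n-1})$, the sandwich $1\leq\int_{u_n}^{u_{n-1}}\dd u/\psi(u)\leq\psi(u_{n-1})/\psi(u_n)\to 1$ via $u_n/u_{n-1}\to 1$ and regular variation, then Karamata on $\int_{u_n}^1\dd u/\psi(u)$ --- are exactly the classical route to Slack's lemma. One small remark: the passage $\int_{u_n}^1\dd u/\psi(u)\sim\frac{1}{c_\alpha}\int_1^{1/u_n}v^{\alpha-2}/L(v)\,\dd v$ is best justified not by substituting the asymptotic for $\psi$ under the integral sign, but by applying Karamata directly to $v\mapsto 1/(v^2\psi(1/v))$, which is regularly varying of index $\alpha-2$ at infinity; this avoids any worry about uniformity of the equivalence. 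Either way the conclusion is valid.
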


\begin{proof} Note that $(i)$ is an immediate consequence of the Markov property and the left-continuity of $(W_n)$. The point $(ii)$ has been proved Kemperman~\cite[Equation (7.15)]{kemperman}, but see also Wendel~\cite{wendel} for the now-standard combinatorial proof. Let us prove $(iii)$. By $(ii)$ and by Proposition~\ref{luka_GW} $(iii)$, we see that  $\P (\# \tau= n)= \frac{1}{n} \P (W_n\! = \! -1)$. We next use Gnedenko's local limit Theorem (see e.g.~\cite[Theorem 8.4.1]{RegVar}) to get $\lim_{n\to \infty}|a_n \P (W_n \! = \! -1)  -\!  c_\alpha | \! = \! 0$ and thus $\P (\# \tau \! = \! n) \! \sim \!  \frac{c_\alpha }{na_n}$. 
Since $(a_n)$ varies regularly with index $1/\alpha$ by Proposition~\ref{limthtpsfix}, we get $\P  (\# \tau \! \geq \! n) \sim \frac{\alpha c_\alpha}{a_n} $ by Karamata's Abelian Theorem for Tails (see Proposition~\ref{potter} $(ii)$). For $(iv)$, see Slack~\cite[Lemma 2]{slack68}.
\end{proof}

We next recall two limit theorems that are used to prove Theorem~\ref{stable_HS_order}. 
One follows from the convergence of rescaled $\GW(\mu)$-trees to stable trees due to Duquesne~\cite{duquesne_contour_stable}. 
The other is the uniform integrability of the density of the law of (roughly speaking) 
the Lukasiewicz path of a $\GW(\mu)$-tree $\tau$ under $\P (\, \cdot \, | \, \# \tau \! = \! n)$ with respect to $\P (\, \cdot \, | \, \# \tau \! \geq \! n)$ that has been proved in Kortchemski~\cite{kortchemski_simple}.

\begin{proposition}
\label{height_stable_cond}
Assume that $\mu$ is a critical and non-trivial probability measure on $\bbN$, that it is aperiodic, and that it belongs to the domain of attraction of a stable law of index $\alpha\in(1,2]$. More precisely, we assume that $(d)$ in Proposition~\ref{limthtpsfix} holds true. 
Let $\tau$ be a $\GW(\mu)$-tree. Recall from (\ref{heightdelta}) 
that $|\tau|$ stands for the height of $\tau$, and that $W(\tau)$ stands for its Lukasiewicz path as in Definition~\ref{Lukadef}. Then, the following holds true.  
\begin{itemize}

\item[$(i)$] There exists a random variable $M\in(0,\infty)$ such that $\frac{a_n}{n}|\tau|$ under $\P(\, \cdot\, |\, \#\tau=n)$ converges in law to $M$ as $n\to \infty$. 

\item[$(ii)$] Let $r\!\in\!(0,1)$. Then for all large enough $n$, there is a function $D^{(r)}_{n}\! :\! 
\mathbb{N}\!\to\!  [0,\infty)$ such that
\begin{equation}
\label{abs_continuity_equation}
\E\big[ f\big( W_{\min(\lfloor nr \rfloor,\cdot)} (\tau)\big) \, \big|\,  \#\tau \!=\! n \big] = \E\big[ f\big(W_{\min(\lfloor nr \rfloor,\cdot)} (\tau)\big)  
D_n^{(r)} \big( W_{\lfloor rn\rfloor}(\tau) \big) \, \big| \, \#\tau \! \geq\!  n  \big] 
\end{equation}
for all bounded functions $f : \mathbb N^{\mathbb N}\rightarrow [0,\infty)$. Moreover, these functions satisfy
\begin{equation}
\label{abs_continuity_estimate}
\lim_{c\rightarrow\infty}\limsup_{n\rightarrow\infty} 
\E\big[  \un_{\{ D_n^{(r)}\! (W_{\lfloor rn\rfloor}(\tau))\geq c\}}
D_n^{(r)}  \big( W_{\lfloor rn\rfloor}(\tau) \big) \, \big| \, \#\tau \geq  n  \big] =0. 
\end{equation}
\end{itemize}
\end{proposition}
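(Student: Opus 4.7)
For assertion $(i)$, I would invoke the scaling limit for conditioned Galton--Watson trees established by Duquesne \cite{duquesne_contour_stable}: under the assumption of Proposition \ref{limthtpsfix} $(d)$, the rescaled height process $\bigl(\tfrac{a_n}{n}|u_{\lfloor nt\rfloor}(\tau)|\bigr)_{t\in[0,1]}$ under $\P(\,\cdot\,|\,\#\tau=n)$ converges in distribution, in the Skorokhod sense, to the normalized excursion $H^{\mathrm{exc}}$ of the $\alpha$-stable height process, whose sample paths are continuous. Since $|\tau| = \max_{0 \leq j < \#\tau}|u_j(\tau)|$ and the supremum is a continuous functional at continuous paths, the continuous mapping theorem yields the convergence in law of $\tfrac{a_n}{n}|\tau|$ to $M := \sup_{t\in[0,1]}H^{\mathrm{exc}}_t$, which is almost surely in $(0,\infty)$ by standard properties of stable height processes.

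For assertion $(ii)$, my plan is to exploit the Markov property of the Lukasiewicz walk $W$ encoding $\tau$ via Proposition \ref{luka_GW} $(iii)$. Write $N = \lfloor rn \rfloor$. The events $\{\#\tau = n\}$ and $\{\#\tau \geq n\}$ translate into $\{\mathtt{H}_1 = n\}$ and $\{\mathtt{H}_1 \geq n\}$ respectively, and on $\{\mathtt{H}_1 > N\} = \{W_N \geq 0\}$ the Markov property at time $N$ combined with the translation invariance of $W$ gives, for any nonnegative measurable $f$,
\[
\E\!\left[f\bigl(W_{\min(N,\cdot)}\bigr)\, \I{\mathtt{H}_1 = n}\right] = \E\!\left[f\bigl(W_{\min(N,\cdot)}\bigr)\, \I{\mathtt{H}_1 > N}\, g_{n,N}(W_N)\right],
\]
with $g_{n,N}(w) = \P(\mathtt{H}_{w+1} = n - N)$, and the analogous identity for $\{\mathtt{H}_1 \geq n\}$ with $h_{n,N}(w) = \P(\mathtt{H}_{w+1} \geq n - N)$. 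Normalizing by $\P(\#\tau = n)$ and $\P(\#\tau \geq n)$ identifies
\[
D_n^{(r)}(w) = \frac{\P(\#\tau \geq n)}{\P(\#\tau = n)} \cdot \frac{\P(\mathtt{H}_{w+1} = n - N)}{\P(\mathtt{H}_{w+1} \geq n - N)},\qquad w \in \mathbb{N},
\]
which depends on the trajectory only through $W_N$, as required.

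To obtain the uniform integrability estimate (\ref{abs_continuity_estimate}), I would combine Kemperman's identity (Proposition \ref{lockemp} $(ii)$) $\P(\mathtt{H}_p = m) = \tfrac{p}{m}\P(W_m = -p)$ with Gnedenko's uniform local limit theorem, namely $a_m\P(W_m = -p) \to p_\alpha(p/a_m)$ uniformly in $p$, where $p_\alpha$ denotes the continuous density of $X_\alpha$. Together with the asymptotics $\P(\#\tau = n)\sim c_\alpha/(na_n)$ and $\P(\#\tau \geq n) \sim \alpha c_\alpha / a_n$ from Proposition \ref{lockemp} $(iii)$, this yields that $D_n^{(r)}(a_n x)$ converges as $n\to\infty$ to a continuous bounded function $\Phi_{\alpha,r}(x)$, uniformly on compact subsets of $[0,\infty)$. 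Since tightness of $W_N/a_n$ under $\P(\,\cdot\,|\,\#\tau\geq n)$ follows from the scaling convergence invoked in $(i)$, the uniform integrability reduces to controlling the tail contribution of $D_n^{(r)}(W_N)$ when $W_N/a_n$ is atypically large. The most delicate point is precisely the uniformity of the local-limit estimates in the regime where $w$ is of order $a_n$ or larger, where both $g_{n,N}(w)$ and $h_{n,N}(w)$ become small and their ratio sensitive; this quantitative analysis is the technical heart of the argument of Kortchemski \cite{kortchemski_simple} and would need to be reproduced to upgrade the pointwise asymptotics of $D_n^{(r)}$ to the required uniform integrability.
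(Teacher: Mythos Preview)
Your plan for $(i)$ and for the derivation of $D_n^{(r)}$ in $(ii)$ are correct and coincide with the paper's approach, which simply cites Duquesne \cite[Theorem 3.1]{duquesne_contour_stable} and Kortchemski \cite[Equation (1), Lemma 2]{kortchemski_simple}.

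For the uniform integrability (\ref{abs_continuity_estimate}), however, you miss a simplification that makes the tail analysis you flag as ``the technical heart'' unnecessary. Apply (\ref{abs_continuity_equation}) itself with $f = \un_{\{D_n^{(r)}(W_N) \geq c\}}$, where $N = \lfloor rn\rfloor$: the left-hand side of (\ref{abs_continuity_estimate}) is then exactly
\[
\P\big(D_n^{(r)}(W_N(\tau)) \geq c \ \big|\ \#\tau = n\big).
\]
Now one only needs two ingredients: the uniform convergence $D_n^{(r)}(a_n\,\cdot\,) \to \Phi$ on compact subsets of $(0,\infty)$ (Kortchemski \cite[Equation (12)]{kortchemski_simple}), and tightness of $W_N(\tau)/a_n$ in $(0,\infty)$ under $\P(\,\cdot\,|\,\#\tau = n)$ (Kortchemski \cite[Equation (17)]{kortchemski_simple}). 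Given $\epsilon > 0$, choose a compact $K \subset (0,\infty)$ with $\P(W_N/a_n \notin K\,|\,\#\tau = n) < \epsilon$ for large $n$; then for $c > 1 + \sup_K \Phi$ the event $\{D_n^{(r)}(W_N) \geq c\}$ forces $W_N/a_n \notin K$ for all large $n$, and (\ref{abs_continuity_estimate}) follows. No control of $D_n^{(r)}$ in the regime $W_N \gg a_n$ is required.

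Note also that you invoke tightness under $\P(\,\cdot\,|\,\#\tau \geq n)$, appealing to $(i)$; but $(i)$ concerns the conditioning $\#\tau = n$, and more importantly, tightness under $\geq n$ alone would \emph{not} suffice for your direct route, precisely because the expectation in (\ref{abs_continuity_estimate}) carries the extra factor $D_n^{(r)}(W_N)$, which could blow up on the small-probability tail. The switch to conditioning on $\#\tau = n$ via (\ref{abs_continuity_equation}) is what absorbs this factor.
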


\begin{proof} The point $(i)$ is a consequence of the convergence of rescaled Galton--Watson trees to the $\alpha$-stable tree: see Duquesne~\cite[Theorem 3.1]{duquesne_contour_stable}. Here, $M$ is the height of the normalized $\alpha$-stable tree that is a $(0, \infty)$-valued random variable. See Kortchemski~\cite[Equation (1) and Lemma 2]{kortchemski_simple} for (\ref{abs_continuity_equation}). For (\ref{abs_continuity_estimate}), \cite[Equation (12)]{kortchemski_simple} shows that $D_n^{(r)}(a_n\,\cdot\,)$ uniformly converges on all compact intervals of $(0,\infty)$ towards a continous function. Moreover, the laws of $a_n^{-1}W_{\lfloor rn\rfloor}(\tau)$ under $\P(\,\cdot\,|\,\#\tau=n)$ are tight in $(0,\infty)$ (see \cite[Equation (17)]{kortchemski_simple}), which completes the proof of (\ref{abs_continuity_estimate}) by (\ref{abs_continuity_equation}). 
\end{proof}

\paragraph*{Limit theorems of the literature: the $1$-stable cases} We now consider the domain of attraction of the spectrally positive $1$-stable law, which features more complicated behaviors.

\begin{proposition}
\label{limthCauchy1} Let $\mu$ be a critical and non-trivial probability measure on $\mathbb N$. Let $(W_n)_{n\in \mathbb N}$ be a random walk starting at $W_0=0$ whose jumps distribution is specified in (\ref{RWW}). 
\begin{itemize}

\item[$(i)$] The following assertions are equivalent. 

\begin{itemize}

\item[$(a)$] The probability measure $\mu$ belongs to the domain of attraction of a $1$-stable law. 

\item[$(b)$] 
There exists $L\! :\!  (0, \infty)\!  \rightarrow\!  (0, \infty)$ that varies slowly at $\infty$ such that $\int_1^\infty \! y^{-1} L(y) \, \dd y \! <\! \infty$ and such that 
$\mu ([n ,\infty))  \sim  n^{-1} L(n)$. 

\item[$(c)$] There exists a $(0, \infty)$-valued sequence $(a_n)_{n\in \bbN}$ tending to $\infty$ such that $\frac{1}{a_n} (W_n+b_n)$ converges in law to $X$, where $b_n =  n\E [W_1 \un_{\{ |W_1| >a_n\}} ]$ and where $X$ is the spectrally positive $1$-stable random variable whose Laplace exponent is given for all $\lambda \in (0, \infty)$ by $\ln  \E [ \exp ( -\lambda X)]  =  \lambda \ln \lambda $. 
\end{itemize}

\item[$(ii)$] Assume that $(a$-$c)$ hold true. Then, $\sum_{k\geq n} k\mu (k) \sim \overline{L} (n)$ where $\overline{L}$ is the slowly varying function defined by 
 \begin{equation}
 \label{deflL}\forall x\in(0,\infty),\quad \overline{L} (x) = \int_x^\infty  \frac{L(y)}{y} \, \dd y. 
 \end{equation}
 This implies that $\psi (s) \sim_{0^+} s \overline{L} (1/s)$. Moreover, $L(x)  =  o(\overline{L} (x))$ as $x\to \infty$. 

\item[$(iii)$]  Assume that $(a$-$c)$ hold true. Then, 
$a_n\sim n L(a_n)$ and $a_n\sim n L^*(n)$ where the function $L^*: x \in  (0, \infty)\mapsto x^{-1} \inf \{ y\! \in \! (0, \infty) : y/ L(y) \! >\!  x \}$ varies slowly at $\infty$. Furthermore, it holds that $b_n \sim n \overline{L} (a_n)$ where $\overline{L}$ is given by (\ref{deflL}), and therefore $a_n=o(b_n)$.  
\end{itemize}
\end{proposition}

\begin{proof} The equivalences $(a) \Leftrightarrow (b)$ and $(a) \Leftrightarrow (c)$ are proved as in Proposition~\ref{limthtpsfix} (for the form of $b_n$, see e.g.~\cite[Chapter IX.8, Equation (8.15)]{feller1971}). To prove $(ii)$, first observe that 
$\sum_{k\geq n} k\mu (k)  =  \sum_{j\geq 1} \mu ([\max(n,j) , \infty))= n \mu ([n , \infty)) + 
\sum_{j>n} \mu ([ j , \infty))$. Then note that $n \mu ([n , \infty)) \sim L(n)$ and that $\sum_{j>n} \mu ([ j , \infty)) \sim \overline{L} (n)$ where $\overline{L}$ is defined by (\ref{deflL}). By Proposition~\ref{potter} $(iii)$, $L(n)= o(\overline{L} (n))$ and we get $\sum_{k\geq n} k\mu (k) \sim \overline{L} (n)$. By \cite[Theorem 8.1.6]{RegVar}, this is equivalent to $f_1(\lambda) \sim_{0^+} \lambda \overline{L} (1/\lambda)$ 
where $f_1$ is given by (\ref{DL2}). This implies  $\psi (s) \sim_{0^+} s \overline{L} (1/s)$ by Proposition~\ref{potter} $(i)$, which completes the proof of $(ii)$. 
Then, we prove that $a_n\sim n L(a_n)$ and $a_n\sim 
n L^*(n)$ as in Proposition~\ref{limthtpsfix}. Next observe that $b_n  =  n \sum_{k>a_n}  k\mu (k+1)  \sim n\overline{L} (a_n) $. Thus $a_n/b_n \sim L(a_n)/ \overline{L} (a_n) \to 0$ by $(ii)$. This completes the proof of $(iii)$.
\end{proof}

Suppose that $(c)$ in Proposition~\ref{limthCauchy1} holds true. 
Since $a_n=o(b_n)$, it holds $b_n^{-1} W_{\lfloor ns \rfloor}\to -s$ in law, and thus in probability, for all $s\in [0, \infty)$. Standard arguments (or a stronger result such as Skorokhod~\cite[Theorem 2.7]{skorokhod57}) entail the following convergence 
\begin{equation}
\label{luka_func_cauchy}
\big(\tfrac{1}{b_n}W_{\lfloor ns\rfloor} \ ; \ s\in [0, \infty) \big)\longrightarrow  \big( -\!s\ ;\  s\in [0, \infty)\big)
\end{equation}
in probability for the topology of uniform convergence on all compact intervals. Recall from (\ref{ladderdef}) the definition of the stopping times $(\mathtt{H}_p)_{p\in \mathbb N}$. Then, (\ref{luka_func_cauchy}) implies for all $x\in [0, \infty)$ that 
\begin{equation}
\label{rela_stabi_H}
\tfrac{1}{n}\mathtt{H}_{\lfloor b_n x \rfloor } \longrightarrow  x \; 
\end{equation}  
in probability. Namely the law of $\mathtt{H}_1$ is \emph{relatively stable} (see e.g.~\cite[Section 8.8 $\S$ 1]{RegVar}). Since the total size of a $\GW(\mu)$-tree $\tau$ has the same distribution as $\mathtt{H}_1$ (by Proposition~\ref{luka_GW} $(iii)$), the law of $\# \tau$ is thus relatively stable. 
By use of  Berger~\cite[Theorem 2.4 and Lemma 4.3]{berger_cauchy} and Kortchemski \& Richier~\cite[Proposition 12]{KRcauchy}, we get the following result. 
\begin{lemma}
\label{numtau1sta}
Let $\mu$ be a critical and non-trivial probability measure on $\mathbb N$ that belongs to the domain of attraction of a $1$-stable law. Let $\tau$ be a $\GW(\mu)$-tree. Then, 
\begin{equation}
\label{PS_tails_size}
\P (\# \tau \geq  n) \sim  \frac{L(b_n)}{b_n \overline{L} (b_n)}, 
\end{equation}
where $(b_n)$, $L$ and $\overline{L}$ are as in Proposition~\ref{limthCauchy1}. Moreover, if $\mu(n)\sim L(n)/n^2$ as in (\ref{H1_loc}) then
\begin{equation}
\label{local_tails_size}
\P(\#\tau=n)\sim \frac{1}{n}\P(\#\tau\geq n)\sim\frac{L(b_n)}{b_n^2}.
\end{equation}
\end{lemma}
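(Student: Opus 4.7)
\medskip
\noindent\textbf{Proof plan.} The strategy is to translate both claims into statements about the hitting time $\mathtt{H}_1$ of the left-continuous random walk $(W_n)$ and then invoke the cited results of Berger and of Kortchemski--Richier. By Proposition \ref{luka_GW}~$(iii)$, $\#\tau$ and $\mathtt{H}_1$ have the same law, so
\[
\P(\#\tau \geq n)=\P(\mathtt H_1\geq n)\quad\text{and}\quad \P(\#\tau=n)=\P(\mathtt H_1=n),
\]
and by Kemperman's identity (Proposition \ref{lockemp}~$(ii)$),
\[
\P(\#\tau=n)=\tfrac{1}{n}\P(W_n=-1).
\]

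\smallskip

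\noindent For the first asymptotic, I would argue as follows. Under Proposition \ref{limthCauchy1}~$(c)$, the law of $\mathtt H_1$ is relatively stable with norming sequence $(b_n)$, since $\mathtt H_{\lfloor b_n x \rfloor}/n\to x$ in probability by (\ref{rela_stabi_H}). Now $(\mathtt H_p-\mathtt H_{p-1})_{p\geq 1}$ is an i.i.d.\ sequence with common law $\mathtt H_1$, so Berger's Theorem 2.4 (together with the tail form of $\mu$ expressed through $L$ and $\ell$) gives the precise tail
$\P(\mathtt H_1\geq n)\sim L(b_n)/(b_n\ell(b_n))$. The identification of the slowly varying factors proceeds via the asymptotic $\psi(s)\sim_{0^+} s\ell(1/s)$ from Proposition \ref{limthCauchy1}~$(ii)$, and via $b_n\sim n\ell(a_n)$, $a_n\sim nL(a_n)$ from Proposition \ref{limthCauchy1}~$(iii)$; Berger's Lemma 4.3 is the auxiliary tool that matches the ladder-height / ladder-epoch renewal structure of $W$ to the tail of $\mu$.

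\smallskip

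\noindent For the local result under the stronger hypothesis (\ref{H1_loc}), note that $\mu(n)\sim L(n)/n^2$ implies $\mu([n,\infty))\sim L(n)/n$ (by summation and Proposition \ref{potter}~$(ii)$--$(iii)$), so Proposition \ref{limthCauchy1}~$(b)$ is satisfied and the previous asymptotic is in force. The additional local regularity on $\mu$ allows one to apply Kortchemski--Richier's Proposition 12, which supplies the local limit statement
\[
\P(W_n=-1)\sim \tfrac{n}{b_n^2}L(b_n),
\]
or equivalently, after Kemperman's identity, $\P(\#\tau=n)\sim L(b_n)/b_n^{2}$. Comparing with the tail asymptotic of $\P(\#\tau\geq n)$ obtained above and using $b_n\sim n\ell(a_n)$ together with $L(a_n)=o(\ell(a_n))$ gives the desired equivalence $\P(\#\tau=n)\sim \tfrac{1}{n}\P(\#\tau\geq n)$, which is the typical subexponential ratio.

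\smallskip

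\noindent The main obstacle I anticipate is not conceptual but bookkeeping: carefully matching the normalization conventions of Berger and of Kortchemski--Richier (centering constants $b_n$, ladder-height tail exponents, choice of slowly varying representative) with those used here so that the final form of the asymptotic comes out as $L(b_n)/(b_n\ell(b_n))$ rather than some equivalent but differently expressed quantity. Once this translation is set up, both assertions follow from a direct quotation of the cited theorems.
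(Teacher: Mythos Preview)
Your overall skeleton is right --- reduce to $\mathtt H_1$ via Proposition~\ref{luka_GW}, use relative stability and the cited results of Berger and of Kortchemski--Richier --- but you have interchanged the roles of the two quoted theorems, and this is not merely a labeling issue.

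Berger \cite[Theorem 2.4]{berger_cauchy} is a \emph{local} limit theorem for $\P(W_n=-1)$, and it requires the stronger pointwise hypothesis (\ref{H1_loc}). You cannot invoke it for the tail estimate (\ref{PS_tails_size}), which must hold under the weaker assumption $\mu([n,\infty))\sim L(n)/n$ alone. Conversely, Kortchemski--Richier \cite[Proposition 12]{KRcauchy} is a \emph{tail} result: it converts information on the truncated mean $\sum_{k\leq n}\P(\mathtt H_1\geq k)$ into an asymptotic for $\P(\mathtt H_1\geq n)$, and does not supply the local statement $\P(W_n=-1)\sim nL(b_n)/b_n^2$.

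The paper's route for (\ref{PS_tails_size}) is therefore a bit more structured than your sketch. From relative stability of $\mathtt H_1$, \cite[Theorem 8.8.1]{RegVar} furnishes a slowly varying $l$ with $\sum_{k\leq n}\P(\mathtt H_1\geq k)\sim l(n)$ and $1-\E[e^{-\lambda\mathtt H_1}]\sim_{0^+}\lambda\, l(1/\lambda)$; combining this with (\ref{rela_stabi_H}) identifies $l(n)\sim 1/\ell(a_n)$, and Berger \cite[Lemma 4.3]{berger_cauchy} then gives $\ell(a_n)\sim\ell(b_n)$. Only at this point does \cite[Proposition 12]{KRcauchy} apply to extract $\P(\#\tau\geq n)\sim L(b_n)/(b_n\ell(b_n))$. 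For (\ref{local_tails_size}), it is Berger's Theorem 2.4 (under (\ref{H1_loc})) plus Kemperman that yields $\P(\#\tau=n)\sim L(b_n)/b_n^2$, and the ratio $\tfrac{1}{n}\P(\#\tau\geq n)$ follows since $b_n\sim n\ell(b_n)$. So: swap your two citations, and make the intermediate step through $\sum_{k\leq n}\P(\mathtt H_1\geq k)$ explicit.
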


\begin{proof} Since $b_n / n \sim \overline{L} (a_n) \to 0$ as $n\to \infty$, (\ref{rela_stabi_H}) implies that there is a sequence $(c_n)$ tending to $\infty$ such that $\frac{1}{c_n} \mathtt{H_n} \to 1$ in probability: namely, the law of $\mathtt H_1$ is relatively stable. Then, \cite[Theorem 8.8.1]{RegVar} asserts the existence of a function $\ell$ that slowly varies at $\infty$ such that 
$\sum_{0\leq k\leq n} \P ( \mathtt{H}_1\! \geq\!  k)  \sim \ell(n)$ and $1-\E [ e^{-\lambda \mathtt{H}_1} ] \sim_{0^+} \lambda  \ell (1/\lambda)$. Thus, writing $\E[e^{- \mathtt{H}_{\lfloor b_n\rfloor}/n}]=\E[e^{-\mathtt{H}_1/n}]^{\lfloor b_n\rfloor}$ thanks to Proposition~\ref{lockemp} $(i)$, the convergence (\ref{rela_stabi_H}) entails that $\overline{L} (a_n) \ell (n) \to 1$. By Berger~\cite[Lemma 4.3]{berger_cauchy}, we get $\overline{L} (a_n)\sim \overline{L} (b_n) $ and thus, 
$\sum_{0\leq k\leq n} \P (\# \tau \!  \geq \!  k)  =    \sum_{0\leq k\leq n} \P ( \mathtt{H}_1 \! \geq \!  k)   
\sim  1/\overline{L} (b_n)$.  By Kortchemski \& Richier~\cite[Proposition 12]{KRcauchy}, we get (\ref{PS_tails_size}) because within the notation of \cite{KRcauchy}, we necessarily get $\Lambda (n) \! \sim \! 1/\overline{L} (b_n) $. Moreover, Berger~\cite[Theorem 2.4]{berger_cauchy} asserts that if $\mu(n)\sim L(n)/n^2$ then $\P (W_n = \! -1)  \sim  nL(b_n)/b_n^2$, so (\ref{local_tails_size}) follows from Kemperman's formula (Proposition~\ref{lockemp} $(ii)$). 
\end{proof}
 
 \begin{remark}
\label{newesti} 
Although the difficult part of (\ref{PS_tails_size}) is the very content of Kortchemski \& Richier~\cite[Proposition 12]{KRcauchy}, the relatively explicit form of the right member of (\ref{PS_tails_size}) seems novel under the sole assumption that $\mu$ is in the domain of attraction of a $1$-stable law.
\end{remark}

We next recall two limit theorems on the maximal out-degree of a $\GW(\mu)$-tree $\tau$ when $\mu$ belongs to the domain of attraction of a $1$-stable law. They are part of more general results due to Kortchemski \& Richier~\cite{KRcauchy}. 
\begin{proposition}
\label{KRs}
\label{D_stable_cond_atleast}
Let $\tau$ be a $\GW(\mu)$-tree whose offspring distribution $\mu$ is critical and non-trivial and that belongs to the domain of attraction of a $1$-stable law. Recall from (\ref{heightdelta}) that $\Delta (\tau)$ stands for the maximal out-degree of $\tau$. Then, the following holds true. 
\begin{itemize}
\item[$(i)$] The following convergence holds in distribution on $[0, \infty)$:
\[\frac{1}{b_n}\Delta(\tau) \; \text{ under } \P (\, \cdot \, |\, \# \tau  \geq n)  \, \longrightarrow J,\]
where the law of $J$ is given by $\P(J\geq x)=1/x$ for all $x\in [1, \infty)$.  

\item[$(ii)$] Under the stronger assumption (\ref{H1_loc}), the following convergence holds in probability:  
\[\frac{1}{b_n}\Delta(\tau) \; \text{ under } \P (\, \cdot \, |\, \# \tau = n)  \, \longrightarrow 1. \]
\end{itemize}
\end{proposition}

\begin{proof} For $(i)$, see \cite[Theorem 6]{KRcauchy}. For $(ii)$, see \cite[Theorem 1]{KRcauchy}.
\end{proof}

Recall that $W(\tau)$ stands for the Lukasiewicz path of $\tau$, as in Definition~\ref{Lukadef}. We conclude this section by recalling a result from Kortchemski \& Richier~\cite{KRcauchy} that shows that the law of $W(\tau)$ under $\P (\, \cdot \, |\, \# \tau =  n)$ is closed in variation distance to the law of the Vervaat transform of the path $(W_0, W_1, \ldots, W_{n-1} , -1)$ under $\P$. More precisely, let $(W_n)_{n\in \mathbb N}$ be as in Proposition~\ref{limthCauchy1}. For all $n\in \mathbb N^*$, we introduce the following notation. 
\begin{align}
\label{minargmin}
I_n &= -   \min_{ 0\leq j \leq n-1}  W_j \quad \text{ and } \quad \sigma_n =  \inf\{0  \leq  k \leq  n - 1\ :\ W_k  = -I_n  \},\quad \text{ and }\\
\label{Vervdef}
 Z_j^{(n)}  &= \begin{cases}
				  W_{\sigma_n +j}  + I_n &  \text{ if } \; \, 0 \leq  j < n - \sigma_n  , \\
				 I_n  - 1 + W_{j-(n-\sigma_n)}& \text{ if } \; \, n  \geq  j  \geq  n - \sigma_n. 
			\end{cases}
\end{align}			
Namely, $Z^{(n)}$ is constructed by reading the increments of 
$(W_0,W_1,...,W_{n-1},-1)$ from left to right in cyclic order by starting at time $\sigma_n$: this is a kind of \emph{Vervaat transform} of $(W_0,W_1,...,W_{n-1},-1)$ (see Vervaat~\cite{vervaat} for more details).  
We shall use Kortchemski \& Richier~\cite[Theorem 21]{KRcauchy}, which we re-state by convenience 
into the following proposition. 
\begin{proposition}
\label{KRvervaat}
We keep the above notation and assume that $\mu$ is critical and non-trivial and satisfies (\ref{H1_loc}). Let $\mathscr{B}(\mathbb{R}^{n+1})$ stand for the Borel sigma-field of $\mathbb{R}^{n+1}$. Then, 
\[\sup_{A\in\mathscr{B}(\mathbb{R}^{n+1} )}\Big|\, \P\big(W (\tau)  \in  A \, \big|\, \#\tau  = n\big)-
\P\big(Z^{(n)} \in  A\big)\, \Big| \longrightarrow 0.\]
\end{proposition}

\begin{proof} See Kortchemski \& Richier~\cite[Theorem 21]{KRcauchy} and note that $\P(I_n>1)\rightarrow 1$ by using (\ref{luka_func_cauchy}) for example.
\end{proof}

\section{Distribution of the Horton--Strahler number of Galton--Watson trees}
\label{distribution_HS}

\subsection{Alternative definitions of the Horton--Strahler number and basic properties}
\label{genHSsec}
In this section, we prove basic results on the Horton--Strahler number. Firstly we provide alternative definitions, then we state a key upper bound in Lemma~\ref{left_right_HS}.

The first alternative definition of the Horton--Strahler number uses \emph{Horton pruning} of a finite tree $t\in \mathbb T$ that is defined as follows: \emph{remove the leaves of $t$ and merge each line into one edge (a \emph{line} in $t$ is a maximal sequence of vertices $v_0, \ldots, v_n \in t$ such that $k_{v_1} (t)  =  \ldots  =  k_{v_{n-1}} (t) =  1$  and $v_j =  \overleftarrow{v_{j+1}}$ for all  $0 \leq  j  <  n$). The resulting tree is called the Horton-pruned tree, which we denote here by $\mathtt{Prun} (t)$.} Then, $\HS(t)$ is the minimal number of Horton prunings that are necessary to obtain 
$\{ \varnothing\} $ from $t$. Namely, 
\begin{equation}
\label{HSprundef} 
\HS(t)=  \min\big\{n  \in  \mathbb N\, :\, \mathtt{Prun}_n (t) = \{ \varnothing \} \big\}, 
\end{equation}
where $\mathtt{Prun}_n$ stands for the $n$-th iteration of $\mathtt{Prun}$ for all $n\in\bbN^*$ and where $\mathtt{Prun}_0$ stands for the operation that merges each line into one edge. We refer to Kovchegov \& Zaliapin~\cite[Section 2.3]{hortonlaws} for a proof and more details. 

Another useful definition uses embeddings of perfect binary trees. More precisely, let $t, t^\prime   \in  \mathbb T$ be finite. Then, $\phi: t\to t^\prime$ is an \emph{embedding if it is injective and if $\phi (u\wedge v) \! = \! \phi (u) \! \wedge \! \phi (v)$ for all $u, v  \in  t$}. For all $n\in \mathbb N$, denote by $T_{2,n} =  \bigcup_{0\leq k\leq n} \{ 1,2\}^k$ the $n$-perfect binary tree, with the convention that $\{ 1, 2\}^0 =  \{ \varnothing \}$. Then, for all finite trees $t\in \mathbb T$, 
\begin{equation}
\label{HSbindef}
\HS(t)= \max \big\{ n\in \mathbb N\, :\, \exists \phi : T_{2, n} \to t\, \text{ embedding} \big\}. 
\end{equation}    
This result seems to be `part of the folklore'. Let us however provide a short proof. 

\begin{proof}[Proof of (\ref{HSbindef})] We reason by induction on the height $|t|$ of $t$, as defined by (\ref{heightdelta}). Note that (\ref{HSbindef}) obviously holds true 
if $|t|=0$. Now assume that $k:=k_\varnothing(t)\geq 1$ and let $\phi:T_{2,n+1}\to t$ be an embedding. We separate the cases according to the positions of $\phi(\varnothing),\phi(1),\phi(2)$. 

\begin{itemize}
\item If $(i)\preceq \phi(\varnothing)$ with $1  \leq  i \leq k$, then we have $(i)\preceq\phi(u)$ for all $u\in T_{2,n+1}$ by definition of embeddings, and we can check that setting $\phi(u)=(i)*\phi_i(u)$ defines an embedding $\phi_i:T_{2,n+1}\to\theta_{(i)}t$. Conversely, an embedding $\phi_i:T_{2,n+1}\to\theta_{(i)}t$ induces an embedding $\phi:T_{2,n+1}\to t$ such that $(i)\preceq \phi(\varnothing)$. Thus, it holds that
\[\max\big\{n\in\bbN\, :\, \exists \phi : T_{2, n} \to t\, \text{ embedding},(i)\preceq\phi(\varnothing)\big\}=\HS(\theta_{(i)}t).\]

\item Otherwise, $\varnothing=\phi(\varnothing)=\phi(1)\wedge\phi(2)$ so we have distinct $1\leq i, j \leq k$ such that $(i)\preceq \phi(1)$ and $(j)\preceq \phi(2)$. Similarly as before, we see that setting $\phi((1)*u)=(i)*\phi_i(u)$ and $\phi((2)*u)=(j)*\phi_j(u)$ respectively defines two embeddings $\phi_i:T_{2,n}\to\theta_{(i)}t$ and $\phi_j:T_{2,n}\to\theta_{(j)}t$. Conversely, two embeddings $\phi_i:T_{2,n}\to\theta_{(i)}t$ and $\phi_j:T_{2,n}\to\theta_{(j)}t$ induces an embedding $\phi:T_{2,n+1}\to t$ such that $\phi(\varnothing)=\varnothing$, $(i)\preceq \phi(1)$, and $(j)\preceq\phi(2)$. Thus,
\begin{multline*}
\max\big\{n\in\bbN\, :\, \exists \phi : T_{2, n} \to t\, \text{ embedding},(i)\preceq\phi(1),(j)\preceq\phi(2)\big\}\\
=1+\min\big(\HS(\theta_{(i)}t),\HS(\theta_{(j)}t)\big).
\end{multline*}
\end{itemize}
Taking the maximum over $\phi(\varnothing),\phi(1),\phi(2)$  concludes the proof by Definition~\ref{defHS}.
\end{proof}

\noi
The definition (\ref{HSbindef}) immediately implies the following. Let $t, t^\prime\in \mathbb T$ be finite. 
 \begin{equation}
 \label{monotony_HS} 
 \textit{If there is an embedding $\phi: t \to t^\prime$},\quad \textit{ then } \quad \HS(t)\leq \HS(t^\prime). 
\end{equation}

We next use (\ref{HSbindef}) and (\ref{monotony_HS}) to get an upper bound of $\HS(t)$ in terms of $\HS(R_m t)$, where $R_m t$ is defined in (\ref{left_portion}) (recall that it 
is the tree consisting in the first $m+1$ vertices of $t$ in lexicographic order) and of $R_m t^\star$, where $t^\star$ is the \emph{mirror image of $t$} that is formally defined just below. Informally, the mirror image of $t$ is the tree obtained by reversing the left-right order of $t$ while preserving its genealogical structure. See Figure~\ref{fig:mirror} for an example.

\begin{figure}
    \centering
    \begin{subfigure}{0.4\textwidth}
    \centering
    \begin{tikzpicture}[baseline={(current bounding box.center)}, every node/.style={circle,fill,inner sep=0pt, minimum size=5pt}, level
distance=10mm,level 1/.style={sibling distance=20mm},
     level 2/.style={sibling distance=10mm},
     level 3/.style={sibling distance=5mm}]
     \node[label={$\varnothing$}]  {} [grow=up]
     child {node{}
     	child {node{}
                child {node[label={[shift={(0.3,-0.1)}]$v$}]{}
                    child {node{}}
                }
     		child {node{}}
     		child {node{}
     			child {node{}}
     			child {node{}}
     			}
     		}
     	child {node[label={[shift={(-0.3,-0.1)}]$u$}]{}
     		child {node{}}
                }
     	}
     child {node{}};
    \end{tikzpicture}
    \end{subfigure}
    \begin{subfigure}{0.4\textwidth}
    \centering
    \begin{tikzpicture}[baseline={(current bounding box.center)}, every node/.style={circle,fill,inner sep=0pt, minimum size=5pt}, level
distance=10mm,level 1/.style={sibling distance=20mm},
     level 2/.style={sibling distance=10mm},
     level 3/.style={sibling distance=5mm}]
     \node[label={$\varnothing$}]  {} [grow=up]
     child {node{}}
     child {node{}
            child {node[label={[shift={(0.3,-0.1)}]$u^\star$}]{}
     		child {node{}}
                }
     	child {node{}
                child {node{}
     			child {node{}}
     			child {node{}}
     			}
                child {node{}}
                child {node[label={[shift={(-0.3,-0.1)}]$v^\star$}]{}
                    child {node{}}
                    }
     		}
     	}
      ;
    \end{tikzpicture}
    \end{subfigure}
    \caption{An example of a tree $t$ on the left, and its mirror image $t^\star$ on the right. Two vertices $u,v$ of $t$ are depicted, as well as their mirror images $u^\star,v^\star$ on $t^\star$.}
    \label{fig:mirror}
\end{figure}

Let $t\in \mathbb T$ and $u \in t \backslash \{ \varnothing \} $ be the word $(j_1, \ldots, j_n)$.  We set $u_{|0}\! = \! \varnothing$ and $u_{| p}\! = \! (j_1, \ldots, j_p)$ for all $1 \leq p \leq  n$. Then, the mirror image of $u$ is the word $u^\star =  (j_1^\star, \ldots, j_n^\star)$ where  
$$ j^\star_p:= k_{u_{|p-1}} (t)-j_p +1 , \quad 1\leq p \leq n.$$
Also set $\varnothing^\star=\varnothing$. Then $t^\star =  \{ u^\star\,  :\,  u \in  t \}$ and it is easy to show that $t^\star\in \mathbb T$. See Figure~\ref{fig:mirror} for an example of mirror images of vertices on a tree. Note that $u\mapsto u^\star$ is a bijective embedding so $\HS(t^\star)=\HS(t)$ by (\ref{monotony_HS}). We stress that the word $u^\star$ depends on the tree $t$ on which $u$ is observed. Nevertheless, this notation should not lead to confusion here because the underlying tree will always be clear according to context. Since $k_{u^\star} (t^\star)= k_u(t)$, (\ref{lawGW}) implies that if $\tau$ is a critical $\GW(\mu)$-tree, then so is $\tau^\star$. Furthermore, since $\# \tau^\star =  \#\tau$, if $n  \in \mathbb N^*$ is such that $\P(\# \tau \! = \! n) > 0$, then we easily check that 
\begin{equation}
\label{symmtau}
\text{under $\P(\, \cdot \, | \, \# \tau  =  n)$, } \quad \tau^\star \overset{\text{(law)}}{=} \tau.
\end{equation}
The following lemma plays a key role in the proofs of Theorems~\ref{stable_HS_order} and \ref{cauchy_HS_order_equal}. 

\begin{lemma}
\label{left_right_HS}
Let $t\in \mathbb T$ be finite. Then the following holds true. 

\begin{itemize}
\item[$(i)$] For all $u\in  t$, set $t_{\leq u} =  \{ v  \in  t\, :\, v  \leq  u\} $. Then, 
\begin{equation}
\label{mirsli0}
\HS(t) \leq 1+ \max \Big( \HS(t_{\leq u}) \, , \,  \max\{ \HS(\theta_v t) \, : \, v  \in  t,  \overleftarrow{v}  \preceq  u  \text{ and }  v > u \} \Big).
\end{equation}

\item[$(ii)$] Let $m\in\mathbb{N}$ be such that $2m\geq\#t+|t|$. Recall from (\ref{left_portion}) that $R_m t$ is the tree consisting in the first $m+1$ vertices of $t$ in lexicographic order. Then, 
\begin{equation}
\label{keybnd}
\HS(t) \leq1+\max \big(\HS(R_m t),\HS(R_m t^\star)\big).
\end{equation}
\item[$(iii)$] If $\tau$ is a random finite tree such that $\tau^\star$ has the same law as $\tau$, then for all $n, m\in \mathbb N^*$,
\[\P (\HS(\tau) \geq n ) \leq 2\P \big( \HS( R_m \tau) \geq  n \! -\!  1 \big)+ \P ( \# \tau + |\tau|  >  2m ).  \]
\end{itemize}
\end{lemma}

\begin{proof}
 We first prove $(i)$. If $\HS (t) =  0$, then (\ref{mirsli0}) is obviously true. We next suppose that $\HS(t) =  n  \geq 1$, and to simplify notation, set 
\[\lgeo \varnothing , u\rgeo =\{ v \in t\, :\,  v \preceq  u\} \quad \text{ and } \quad  B= \{ v \in t \, :\,   \overleftarrow{v}  \preceq  u \text{ and } v > u\} . \]
By (\ref{HSbindef}), there is an embedding $\phi  :  T_{2, n}  \to  t$. Observe that $\phi (1)$ and $\phi (2)$ cannot both belong to $\lgeo \varnothing , u \rgeo$, otherwise it would imply $\phi (1)\wedge \phi (2) = \phi (\varnothing)\in\{\phi(1),\phi(2)\}$ (by definition of embeddings since $\varnothing = (1) \wedge  (2)$), which contradicts the injectivity of $\phi$. Thus, there is $j \in  \{ 1, 2\}$ such that either 
$\phi (j) \in  t_{\leq u} \backslash \lgeo \varnothing , u \rgeo $ or $\phi (j) \in  t\backslash  t_{\leq u} $. 
In the first case, by definition of embeddings, $\phi(j)\preceq\phi((j)*v)$ for all $v\in T_{2,n-1}$ and so $\phi((j)*T_{2, n-1} )  \subset  t_{\leq u}$. Then, (\ref{HSbindef}) entails that $n\! -\! 1 \leq  \HS(t_{\leq u})$. Suppose next that $\phi (j) \in  t\backslash  t_{\leq u} $. Since 
$ t\backslash  t_{\leq u}$ is the disjoint union of the $v  \ast  (\theta_v t)$ for $v \in  B$, there exists $v \in  B$ such that 
$\phi ((j)\ast T_{2, n-1} )  \subset  \phi (j) \ast (\theta_{\phi (j)} t ) \subset  v\ast ( \theta_v t )$. Then, (\ref{HSbindef}) entails that $n\! -\! 1 \leq  \max_{v\in B} \HS(\theta_v t )$, which completes the proof of (\ref{mirsli0}). 
 
\smallskip

Let us now prove $(ii)$. To that end, denote by $\overline{u}$ the $\leq$-minimal leaf of $\theta_u t$ and set $t_{\geq u}  =  \{ v  \in  t \, :\, v  \geq  \overline{u} \} \cup \lgeo \varnothing, \overline{u} \rgeo$. By definition, $t =  t_{\leq u} \cup t_{\geq u}$ and $\lgeo \varnothing, u \rgeo =    t_{\leq u} \cap t_{\geq u}$. Then, note that $v \ast (\theta_v t)  \subset   t_{\geq u}$ for all $v \in  B$, and that $(t_{\geq u})^\star =  t^\star_{\leq \overline{u}^\star}$. Thus, (\ref{monotony_HS}) and $(i)$ imply that
\begin{equation}
\label{mirsli}
\HS(t) \leq   1+ \max \big( \HS(t_{\leq u}), \HS(t_{\geq u}) \big) =1+ \max \big( \HS(t_{\leq u}), \HS(t^\star_{\leq \overline{u}^\star}) \big).
\end{equation}
Next set $m\!+\!1 = \# t_{\leq u}  =  \# \{ v \in  t\, :\,  v \leq  u \}$ and 
$m^\prime\!+\! 1 = \# t_{\geq u}  =  \# \{ v \in  t^\star\, :\,  v \leq  \overline{u}^\star \}$.  
Observe that $R_m t =  t_{\leq u}$ and $R_{m^\prime} t^\star =  t^\star_{\leq\overline{u}^\star}$. 
Moreover, $\#t  =  \# t_{\leq u} + \# t_{\geq u}  - \# \lgeo \varnothing , u \rgeo  =  m+ m'  + 1 - |u|$. 
Thus, $m+ m^\prime  <  \# t + |t|$. If  
$2m\geq\#t+|t|$, then $m^\prime  <  \frac{1}{2}  (\# t + |t|)  \leq  m$ and $R_{m^\prime} t^\star \! \subset \! R_m t^\star$. By (\ref{monotony_HS}), we get $\HS(R_{m^\prime}t^\star)\! \leq \!   \HS(R_{m}t^\star)$ and we obtain (\ref{keybnd}) from (\ref{mirsli}). 

\smallskip

The point $(iii)$ is an easy consequence of $(ii)$: we leave the details to the reader. 
\end{proof}
 
\subsection{Recursive equation for the distribution}
\label{sec:recursive}

Let us fix $\mu$ a probability measure on $\bbN$. Recall from (\ref{critimu}) that we say that $\mu$ is critical and non-trivial when $\sum_{k\in\bbN}k\mu(k)=1$ and $\mu(0)>0$. In this section, we state in Proposition~\ref{equHS} a recursive equation satisfied by the tail of the Horton--Strahler number of $\GW(\mu)$-trees that is the starting point of the analysis of their asymptotic behavior. This equation involves the functions $\varphi$ and $\psi$ introduced in (\ref{psi_def}), that are given by $\varphi(s)=\sum_{k\in\bbN}s^k\mu(k)$ and $\psi(s)=\varphi(1-s)-(1-s)$ for $s\in[0,1]$.

\begin{proposition}
\label{equHS}
Assume that $\mu$ is critical and non-trivial and let $\tau$ be a $\GW(\mu)$-tree.  For all $n\in \mathbb N$, set $q_n= \P(\HS(\tau)>n)$. Then it holds that
\begin{equation}
\label{rec_tails_HS} 
1-q_0= \frac{\mu (0)}{1-\mu(1)} \quad \text{ and } \quad 1-q_{n+1}=\varphi(1-q_n)+(q_n-q_{n+1})\varphi'(1-q_n), \quad n\in \mathbb N.
\end{equation}
This equation can be rewritten in terms of $\psi$ as $q_n - q_{n+1} =   \psi (q_n)/\psi' (q_n) $. 
\end{proposition}

\begin{proof}
By Definition~\ref{defHS} of the Horton--Strahler number, $\HS(\tau) =  0$ if and only if $k_\varnothing (\tau) =  0$ or $(k_\varnothing (\tau)\! = \! 1 ;  \HS(\theta_{(1)} \tau )\! = \! 0)$. Thus, by Definition~\ref{GWfordef} of $\GW(\mu)$-trees, we get $\P(\HS(\tau) \! = \! 0) =  \mu (0)+ \mu(1) \P(\HS(\tau) \! = \! 0)$, which gives the first equality in (\ref{rec_tails_HS}). 

Let us prove the recursive relation in (\ref{rec_tails_HS}). Let $n\in\mathbb{N}$. By Definition~\ref{defHS}, $\HS(\tau )  \leq  n\!+\!1$ if and only if $\HS(\theta_{u}\tau)  \leq  n$ for all children $u$ of $\varnothing$ in $\tau$ (if any) with the possible exception of one child $v$, which may satisfy $\HS(\theta_v\tau)=n\!+\!1$. More precisely, we have
\begin{equation}
\label{decomHStau}
\un_{\{ \HS(\tau)\leq n+1\}}= \!\!\!\!\!\! \prod_{\quad 1\leq j \leq k_\varnothing(\tau)}\!\!\!\!\!  \I{\HS(\theta_{(j)}\tau)\leq n }
 +\!\!\! \!\!\!\!\!\! \sum_{\quad 1\leq j \leq k_\varnothing(\tau)} \!\!\!\! \I{ \HS(\theta_{(j)}\tau)=n+1} \!\!\! \!\!\!\!\!\! \prod_{\substack{\quad 1\leq i\leq k_\varnothing(\tau)\\ i\neq j}}\!\!\!\!  \I{\HS(\theta_{(i)}\tau)\leq n } . 
\end{equation}
As $\varphi$ is the generating function of $\mu$, taking the expectation yields (\ref{rec_tails_HS}) by Definition~\ref{GWfordef}.
\end{proof}

\begin{remark}
\label{distribution_HS_exact_stable}
While it seems difficult to solve (\ref{rec_tails_HS}) explicitly in general, it can be done for the so-called 
\emph{$\alpha$-stable offspring distribution $\mu_\alpha$}, $\alpha  \in (1,2]$, 
whose generating function is \[\forall s\in[0,1],\quad \varphi_\alpha(s):=\sum_{k\in\bbN}s^k\mu_\alpha(k)=s+\tfrac{1}{\alpha}(1-s)^\alpha.\]
Indeed, if $\tau_\alpha$ is a $\GW(\mu_\alpha)$-tree then the law of $\HS(\tau_\alpha)$ is a geometric with parameter $\frac1\alpha$, i.e.
\[\P (\HS(\tau_\alpha)=n)= \tfrac{1}{\alpha} \big(1\! -\! \tfrac{1}{\alpha}\big)^{n}, \quad n\in \mathbb N.\]
This is explicitly proved in Kovchegov \& Zaliapin~\cite[Lemma 10]{kovchegov} and earlier for the $\alpha\! = \!  2$ case, see Burd, Waymire \& Winn~\cite[Proposition 2.5]{burd}.

Since their emergence in Zolotarev~\cite{zolotarev}, stable offspring distributions have appeared in numerous papers and proved to be of the utmost importance among all critical offspring distributions. This is due to their nice combinatorial properties~\cite{marchal}, their unique invariances by pruning~\cite{duquesne_winkel}, and their close links with universal scaling limits of random trees~\cite{levytree_DLG}.
\end{remark}

Motivated by Lemma~\ref{equHS}, we end this section with a lemma that lists basic properties of $\psi$, that are useful to analyse (\ref{rec_tails_HS}). Furthermore, we give the consequences for the functions $\Lambda$ and $\Upsilon$, which have been defined in (\ref{Ups}) in terms of $\psi$ as follows: 
\[\forall s\in (0, 1), \quad \Lambda(s)=\frac{s\psi'(s)}{s\psi'(s)-\psi(s)} \quad \text{ and } \quad \Upsilon(s)=\int_s^1\frac{\dd r}{r\ln\Lambda(r)}  .\]

\begin{lemma}
\label{psi'(s)_psi(s)/s}
If $\mu$ is critical and non-trivial, then the following holds true. 
\begin{itemize}
\item[$(i)$] The function $\psi $ is nonnegative, increasing, strictly convex, and analytic on $(0, 1]$. Moreover, $\psi'$ is increasing, concave, and $\psi (0) =  \psi' (0) =  0$. 

\item[$(ii)$] For all $s\in (0, 1]$, it holds that $\frac{1}{2}\psi'(s)\leq \psi(s)/s \leq \psi'(s)$.

\item[$(iii)$]  The functions $\Lambda$ and $\Upsilon$ are well-defined on $(0,1)$ and $\Lambda \geq 2$. Moreover, $\Upsilon$ is continuous, positive, and decreasing on $(0,1)$.
\end{itemize}
\end{lemma}

\begin{proof} The point $(i)$ is elementary.  
The upper bound in $(ii)$ is a consequence of the convexity of $\psi$. To obtain the lower bound in $(ii)$, we apply \emph{Hermite-Hadamard inequality} that asserts that for all convex functions $f: [a, b] \to  \mathbb R$, it holds
\begin{equation}
\label{hermhada}
f \big( \tfrac{1}{2}(a+b) \big) \leq \frac{1}{b - a} \int_a^b \! f(t)\, \dd t \, \leq \tfrac{1}{2} \big( f(a) + f(b) \big).
\end{equation}
Since $ \frac{1}{s} \psi (s) =  \frac{1}{s}\int_0^s\psi'(s)\, \dd s$, applying (\ref{hermhada}) to $-\psi'$ yields that $ \frac{1}{s} \psi (s)\geq \frac{1}{2}\psi'(s)$ and so $(ii)$. Then, $(ii)$ entails that $\Lambda(s)\geq 2$ for all $s\in(0,1)$, and $(iii)$ follows immediately.
\end{proof}

\subsection{Tail estimates of joint laws}
\label{estimates_tech}
In this section, we provide four estimates of the tail of the joint distribution of the Horton--Strahler number of a Galton--Watson tree $\tau$ with either its size $\#\tau$, its height $|\tau|$, or its maximal out-degree $\Delta(\tau)$. As in (\ref{critimu}) and Section~\ref{sec:recursive}, fix a critical and non-trivial probability measure $\mu$ on $\bbN$, and recall from (\ref{psi_def}) that $\varphi(s)=\sum_{k\in\bbN}s^k\mu(k)$ and $\psi(s)=\varphi(1-s)-(1-s)$ for $s\in[0,1]$. Moreover, observe that $\psi' (1) =  1\! -\! \varphi' (0)  =  1\! -\! \mu(1) > 0$ since $\mu$ is non-trivial. Finally, it is convenient to write
\[q_n = \P(\HS(\tau)>n), \quad n \in \{ -1\} \cup \mathbb N,\]
where $q_{-1} =  1$ obviously.

\begin{proposition}
\label{first_moment_size_general}
Let $\tau$ be a $\GW(\mu)$-tree. If $\mu$ is critical and non-trivial then
\begin{equation}
\label{euhenn}
\E \big[\#\tau\I{\HS(\tau)= 0 } \big] = \frac{ \mu(0)}{(1-\mu (1))^2}\quad\text{ and }\quad\psi'(q_{n-1})\E \big[ \#\tau \I{\HS(\tau)\leq n }\big] \leq 2, \quad n\in \mathbb N. 
\end{equation}
\end{proposition}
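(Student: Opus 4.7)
The first identity follows directly from Definition \ref{defHS}: $\HS(\tau)=0$ holds precisely when $\tau$ is a chain, i.e.\ a tree in which every non-leaf vertex has a unique child. By (\ref{lawGW}) such a chain of height $n$ (which has $n+1$ vertices) occurs with probability $\mu(0)\mu(1)^n$, so summing $(n+1)\mu(0)\mu(1)^n$ over $n\geq 0$ yields the announced $\mu(0)/(1-\mu(1))^2$.

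For the inequality, I would set $A_n:=\E[\#\tau\,\un_{\{\HS(\tau)\leq n\}}]$ (with the convention $A_{-1}=0$) and first derive the recursion
\[\psi'(q_{n-1})A_n=(1-q_n)+\psi''(q_{n-1})(q_{n-1}-q_n)A_{n-1}\qquad(n\geq 0).\]
To obtain it I would decompose $\un_{\{\HS(\tau)\leq n\}}$ as in (\ref{decomHStau}) (shifted by one), i.e.\ according to whether all root subtrees have $\HS\leq n-1$ or exactly one of them has $\HS=n$, combine this with $\#\tau=1+\sum_j\#\theta_{(j)}\tau$, and condition on $k_\varnothing(\tau)$ to exploit the i.i.d.\ law of the root subtrees. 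After summing the resulting series against $\mu$, the factors $\varphi'(1-q_{n-1})=1-\psi'(q_{n-1})$ and $\varphi''(1-q_{n-1})=\psi''(q_{n-1})$ appear naturally, and the relation $A_n=A_{n-1}+\E[\#\tau\,\un_{\{\HS(\tau)=n\}}]$ then lets me collect every $A_n$-term on the left hand side.

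Substituting $q_{n-1}-q_n=\psi(q_{n-1})/\psi'(q_{n-1})$ from Lemma \ref{equHS} and writing $a_n:=\psi'(q_{n-1})A_n$, the recursion becomes $a_n=(1-q_n)+c_n a_{n-1}$ with $c_n=\psi(q_{n-1})\psi''(q_{n-1})/\bigl(\psi'(q_{n-1})\psi'(q_{n-2})\bigr)$. Since $\psi'$ is increasing and $q_{n-1}\leq q_{n-2}$ it follows that $c_n\leq \psi(q_{n-1})\psi''(q_{n-1})/\psi'(q_{n-1})^2$, and the heart of the argument is the universal pointwise inequality $2\psi(s)\psi''(s)\leq \psi'(s)^2$ on $[0,1]$. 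To prove it I would introduce $f(s):=\psi'(s)^2-2\psi(s)\psi''(s)$; since $\psi(0)=\psi'(0)=0$ we have $f(0)=0$, while $f'(s)=-2\psi(s)\psi'''(s)\geq 0$ because $\psi'''(s)=-\varphi'''(1-s)\leq 0$ ($\varphi$ being a power series with nonnegative coefficients). Consequently $c_n\leq 1/2$. Taking $a_0=\mu(0)/(1-\mu(1))\leq 1$ as the base case (a consequence of the first identity together with $\psi'(1)=1-\mu(1)$), a one-line induction yields $a_n\leq 1+\tfrac{1}{2}\cdot 2=2$. I expect the main conceptual obstacle to be recognising that $\psi\psi''/(\psi')^2\leq 1/2$ is the correct universal quantity to control; once that is guessed, its proof reduces to the short sign argument on $\psi'''$ sketched above.
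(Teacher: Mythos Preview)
Your proposal is correct and follows essentially the same route as the paper: you derive the same exact recursion for $A_n=e_n$, rewrite it as $a_n=(1-q_n)+c_na_{n-1}$, and close the argument by showing $c_n\le \tfrac12$.

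The only substantive difference lies in how the key inequality $2\psi(s)\psi''(s)\le\psi'(s)^2$ is obtained. The paper writes $\psi(q_n)=\int_0^{q_n}\psi'(r)\,\dd r$, bounds the integrand by the tangent line at $q_n$ using concavity of $\psi'$, and then completes a square. Your argument --- differentiating $f=\psi'^2-2\psi\psi''$ to get $f'=-2\psi\psi'''\ge 0$ --- is shorter and more transparent, and of course both rely on the same fact $\psi'''\le 0$ (equivalently, $\psi'$ concave). One minor point you omit: before manipulating the recursion you should justify that $A_n<\infty$, since you cannot subtract $\varphi'(1-q_{n-1})A_n$ from both sides otherwise; the paper handles this with a preliminary truncation $\min(m,\#\tau)$ and an easy induction.
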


\begin{proof}
 Although $\E [\# \tau ]  =  \infty$, let us first prove that $e_{n} :=  \E  [\#\tau \un_{\{ \HS(\tau)\leq n\} }] < \infty$. By Definition~\ref{defHS} of the Horton--Strahler number, if $\HS(\tau)\leq n$ then $\HS(\theta_u\tau)\leq n$ for all children $u$ of $\varnothing$ in $\tau$ (if any) and $\HS(\theta_u\tau)= n$ for at most one child. 
This implies that for all $m, n\in \bbN$,
\begin{multline}
\label{fini1}
\min(m,\# \tau) \I{ \HS(\tau)\leq n } \leq 1+\sum_{i=1}^{k_\varnothing(\tau)}  \min\big(m,\#\theta_{(i)}\tau \big) \I{\HS(\theta_{(i)}\tau)\leq n-1 }  \\
+\sum_{i=1}^{k_\varnothing(\tau)} \min\big( m, \#\theta_{(i)}\tau \big)  \I{\HS(\theta_{(i)}\tau)\leq n} \prod_{\substack{1\leq j\leq k_\varnothing(\tau) \\ j\neq i}} \I{\HS(\theta_{(j)}\tau)\leq n-1}
\end{multline}
which makes sense even when $n\! = \! 0$, the first sum in the right-hand side of (\ref{fini1}) being then null. 
To simplify notation, set $e_{n} (m) =  \E  [\min( m,\#\tau ) \I{ \HS(\tau)\leq n}]$ for all integers $m\! \geq \! 0$ and $n\! \geq \! -1$ (with $e_{-1} (m)  =  0$). 
Taking the expectation in (\ref{fini1}) gives $e_{n} (m)\! \leq \! 1+e_{n-1} (m)+ e_{n} (m) \varphi'(1\! -\! q_{n-1} )$.   
It easily implies that $e_n  \leq (1+ e_{n-1})/(1\! -\! \varphi'(1\! -\! q_{n-1} ))$ because $\lim_{m\to \infty} e_n(m) =  e_n$. This recursively entails that $e_n <  \infty$ for all integers $n \geq  -1$. 

To simplify notation, set $k_\varnothing  =  k_\varnothing (\tau)$, 
$\tau_j =  \theta_{(j)} \tau$ and $\HS_j  =  \HS ( \theta_{(j)} \tau)$ for all $1\! \leq \! j\! \leq \!  k_\varnothing (\tau)$. 
We first compute $e_0$ by observing that $\# \tau \I{ \HS(\tau)  =  0} =  \I{k_\varnothing =0}+   \I{k_\varnothing =1} (1+ \tau_1) \I{\HS_1 = 0 }$. Taking the expectation entails that $e_0 =  \mu (0)+ \mu(1) (1\! -\! q_0 + e_0)$. By (\ref{rec_tails_HS}), this becomes
\begin{equation}
\label{ezero}
e_0= \E  [\#\tau \I{\HS(\tau)= 0}] =\frac{1-q_0}{1-\mu(1)}= \frac{\mu(0)}{(1-\mu (1))^2}.
\end{equation}
Next, by using the fact that $\# \tau  =  1+ \sum_{1\leq j\leq k_\varnothing } \# \tau_j $ and the decomposition (\ref{decomHStau}), we get 
\begin{multline*}
\label{fini2}
\# \tau \I{ \HS(\tau)\leq n+1 } = \I{ \HS(\tau)\leq n+1 }  + \sum_{j=1}^{k_\varnothing} \#\tau_j \I{ \HS_j \leq n } \prod_{\substack{1\leq i\leq k_\varnothing\\ j\neq i}} \I{\HS_i \leq n }\\
+\sum_{j=1}^{k_\varnothing} \#\tau_j   \I{ \HS_j = n+1 } \prod_{\substack{1\leq i\leq k_\varnothing\\ j\neq i}}  \I{\HS_i \leq n }    + \sum_{\substack{1\leq i, j\leq k_\varnothing\\ j\neq i}}  \# \tau_j  \I{\HS_j \leq n }\I{ \HS_i =n+1 } \prod_{\substack{1\leq l\leq k_\varnothing\\ l\neq i,j }} \I{\HS_l \leq n}.
\end{multline*} 
Taking the expectation term-by-term, we obtain that 
\[e_{n+1}=1-q_{n+1}+e_n\varphi'(1-q_n)+(e_{n+1}-e_n)\varphi'(1-q_n)+e_n(q_n-q_{n+1})\varphi''(1-q_n)\]
for all $n\in\mathbb{N}$. Recall from (\ref{psi_def}) that $\psi (s) \! = \! \varphi (1\! -\! s) \! -\! 1+s$. By Proposition~\ref{equHS}, we find
\[e_{n+1}\psi'(q_n)=1-q_{n+1}+e_n (q_n-q_{n+1})\psi''(q_n)= 1-q_{n+1}+e_n \frac{\psi''(q_n)}{\psi'(q_n)}\int_0^{q_n}\!  \psi'(s)\,\dd s\]
since $\psi(0) =  0$ by Lemma~\ref{psi'(s)_psi(s)/s} $(i)$. Still from Lemma~\ref{psi'(s)_psi(s)/s}, we know that $\psi'$ is concave, so we get that $\psi'(s) \leq \psi'(q_n)  -  (q_n - s)\psi''(q_n)$. Thus, if we set $x:= q_n\psi''(q_n) / \psi'(q_n)$ then
\[\frac{\psi''(q_n)}{\psi'(q_n)}\int_0^{q_n}\! \psi'(s)\, \dd s\leq q_n \psi''(q_n) -  \frac{(q_n\psi''(q_n))^2}{2\psi'(q_n)}= \tfrac{1}{2} \psi'(q_n) \big( 1 - (1\!  -\! x)^2 \big).\]
Note that $x=\psi''(q_n) / (\psi'(q_n)/q_n)$ belongs to $[0, 1]$ since $\psi'$ is concave. Thus, we get 
\[ x_{n+1}:= e_{n+1}\psi'(q_n)\leq 1+ \tfrac{1}{2} e_n \psi'(q_n) \leq  1+ \tfrac{1}{2} e_n \psi'(q_{n-1}) = 1+   \tfrac{1}{2} x_n\]
since  $(q_n)$ is decreasing and $\psi'$ is increasing. This entails that $x_n \leq 2 -2^{-n} (1-x_0)$, which leads to (\ref{euhenn}) 
because $x_0 =  \psi'(1)e_0 =  (1\! -\! \mu(1))e_0 = 1 - q_0 <  1$ by (\ref{ezero}).
\end{proof}

For any $t\in \mathbb T$, set $Z(t)=\max \big\{ |u|\, :\, u \in t\text{ such that } \HS(\theta_u t) = \HS(t) \big\}$. Note that $Z(t) \leq |t|$, where recall from (\ref{heightdelta}) that $|t|$ stands for the height of $t$. 
\begin{proposition}
\label{min_hauteur_selon_H_general}
Let $\tau$ be a $\GW(\mu)$-tree. Assume that $\mu$ is critical and non-trivial. Then $\P (Z (\tau )  \geq  m \, \big| \, \HS(\tau) \! =\!  n)  =  (1 - \psi'(q_{n-1}))^m$ for all $m, n\!\in\! \mathbb N$. Furthermore, for all $\lambda\! >\! 0$,
\begin{equation}
\label{pseudogeo}
\limsup_{n\rightarrow\infty}\P\big(  \psi' (q_{n-1})|\tau|\leq \lambda \,  \big|\,  \HS(\tau)= n \big)\leq 1-e^{-\lambda}.
\end{equation}
\end{proposition}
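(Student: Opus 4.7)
The proof splits naturally into the exact identity and its asymptotic consequence. For the identity, I would argue by \emph{one-step spine decomposition}. Set $p_n(m) = \P(Z(\tau)\geq m,\HS(\tau)=n)$. The base case $m=0$ is $p_n(0)=q_{n-1}-q_n$. For the inductive step, observe the following structural fact: if $\HS(\tau)=n\geq 1$, then by Definition \ref{defHS} the root cannot have two or more children whose subtrees have Horton-Strahler number $\geq n$ (otherwise the root's HS would be at least $n+1$), and moreover $Z(\tau)\geq 1$ forces at least one child $u$ of $\varnothing$ to satisfy $\HS(\theta_u\tau)=n$. Hence $\{Z(\tau)\geq m,\HS(\tau)=n\}$ for $m\geq 1$ is exactly the event that there is \emph{exactly one} child $u$ of the root with $\HS(\theta_u\tau)=n$ and $Z(\theta_u\tau)\geq m-1$, while all other children have HS $\leq n-1$.

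Applying Definition \ref{GWfordef} (the branching property), summing over $k=k_\varnothing(\tau)$ and over which of the $k$ children carries the spine, and using the independence of the subtrees, I obtain
\begin{equation*}
p_n(m) = p_n(m-1) \sum_{k\geq 1} k\mu(k) (1-q_{n-1})^{k-1} = p_n(m-1)\,\varphi'(1-q_{n-1}).
\end{equation*}
Since $\psi(s)=\varphi(1-s)-(1-s)$ gives $\psi'(s)=1-\varphi'(1-s)$, this reads $p_n(m)=(1-\psi'(q_{n-1}))\,p_n(m-1)$. Iterating and dividing by $p_n(0)=\P(\HS(\tau)=n)=q_{n-1}-q_n$ yields the claimed identity.

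For the asymptotic bound, I would use the trivial inequality $Z(\tau)\leq |\tau|$, so that with $m_n:=\lfloor \lambda/\psi'(q_{n-1})\rfloor$,
\begin{equation*}
\P\bigl(\psi'(q_{n-1})|\tau|\leq\lambda\,\big|\,\HS(\tau)=n\bigr) \leq \P\bigl(Z(\tau)\leq m_n\,\big|\,\HS(\tau)=n\bigr) = 1-(1-\psi'(q_{n-1}))^{m_n+1}.
\end{equation*}
Since $\mu$ is critical and non-trivial, $\tau$ and hence $\HS(\tau)$ are a.s.\ finite, so $q_n\downarrow 0$; combined with $\psi'(0)=1-\varphi'(1)=0$ and the continuity of $\psi'$, this gives $\psi'(q_{n-1})\to 0$. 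Since $(m_n+1)\psi'(q_{n-1})\to\lambda$, the elementary limit $(1-x)^{\lambda/x}\to e^{-\lambda}$ as $x\to 0^+$ delivers the $\limsup$ bound $1-e^{-\lambda}$.

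The only genuinely delicate point is the structural claim that justifies the recursion for $p_n(m)$, namely the characterization of $\{Z(\tau)\geq m,\HS(\tau)=n\}$ via a unique spine child; but this follows directly and cleanly from Definition \ref{defHS} once one notes that HS cannot drop along a ``surviving'' spine vertex nor can two independent children attain the value $n$. The rest is routine generating-function manipulation and a standard limit.
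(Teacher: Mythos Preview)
Your proof is correct and follows essentially the same approach as the paper: a one-step branching decomposition at the root to obtain the recursion $p_n(m)=\varphi'(1-q_{n-1})\,p_n(m-1)$, followed by the identification $\varphi'(1-q_{n-1})=1-\psi'(q_{n-1})$, and then the asymptotic consequence via $Z(\tau)\leq|\tau|$ and $\psi'(q_{n-1})\to 0$. The paper's proof is slightly terser but the argument is identical in substance.
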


\begin{proof}
By Definition~\ref{defHS}, for all integers $n \geq  0$ and $m \geq  1$, it holds that
\[\I{ Z(\tau)\geq m\, ;\, \HS(\tau)=n}=\sum_{i=1}^{k_\varnothing(\tau)}\I{ Z(\theta_{(i)}\tau)\geq m-1\,  ; \,  \HS(\theta_{(i)}\tau)=n}  \prod_{\substack{1\leq j \leq k_\varnothing(\tau)\\ j\neq i}}  \I{\HS(\theta_{(j)}\tau)\leq n-1}. \] 
By taking the expectation, we get
\[\P\big( Z(\tau) \geq  m \, ;\, \HS(\tau) = n \big) = \P\big( Z(\tau) \geq  m - 1\, ;\, \HS(\tau) = n \big)\,  \varphi' (1 - q_{n-1}) ,\]
which yields the desired equality as $ \varphi' (1 \!- q_{n-1}) =  1 \!- \psi'(q_{n-1})$. Since $\psi' (q_{n-1}) \rightarrow \! \psi'(0)\! =\! 0$, $\lim_{n\rightarrow\infty}\P (Z (\tau )  \geq  \lambda /  \psi' (q_{n-1}) \, | \, \HS(\tau) \! = \! n)  =  e^{-\lambda}$. This implies (\ref{pseudogeo}) since $Z(\tau)  \leq  |\tau|$.  
\end{proof}

In \cite{brandenberger}, Brandenberger, Devroye \& Reddad study the Horton--Strahler number of a Galton--Watson tree 
conditioned to have exactly $n$ vertices under the assumption that the variance of the offspring distribution 
is finite. To that end, they use a (little more than) local convergence of this tree towards the corresponding size-biased tree. We adapt and extend this idea in a more general context using only 
the Many-To-One Principle to get the following.  
\begin{proposition}
\label{maj_hauteur_selon_H_general}
Let $\tau$ be a $\GW(\mu)$-tree. Assume that $\mu$ is critical and non-trivial. For all integers $n ,m \in  \mathbb N$, if $2\psi'(\P(|\tau|  \geq  \lfloor n/2\rfloor ))  \leq  \psi'(q_m)$ then it holds that
\begin{equation}
\label{heightcond}
\P\big(\HS(\tau)  \leq  m\, \big|\, |\tau|  \geq  n\big) \leq \exp \big(  \!-\! \tfrac{1}{8}\,  n \psi'(q_m)\big). 
\end{equation}
\end{proposition}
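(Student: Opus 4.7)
Let me set $n_* = \lfloor n/2\rfloor$ and $\rho_k = \P(|\tau|\geq k)$ for $k \in \mathbb N$. The plan is to bound the joint probability $\P(\HS(\tau)\leq m,|\tau|\geq n)$ by a spine decomposition at depth $n_*$ combined with the Many-To-One Principle, and then to control the conditioning factor $\rho_n$ using a multiplicative lower bound derived from the standard recursion $\rho_{k+1}=\rho_k-\psi(\rho_k)$.

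\textbf{Spine decomposition.} On $\{|\tau|\geq n\}$ there exists $u\in\tau$ with $|u|=n_*$ and $|\theta_u\tau|\geq n-n_*$. Moreover $\HS$ is monotone under descent, i.e.\ $\HS(\theta_w\tau)\leq\HS(\tau)$ for every $w\in\tau$, by easy induction on $|w|$ using Definition~\ref{defHS}. Hence, on $\{\HS(\tau)\leq m\}$, every sibling $w$ of every strict ancestor of $u$ satisfies $\HS(\theta_w\tau)\leq m$. Define $G_1(t,v)$ to be the indicator that, for every strict ancestor $v_i$ of $v$ in $t$ and every sibling $w$ of $v_i$, $\HS(\theta_w t)\leq m$; since the sibling subtrees in question are entirely contained in $\Cut_u\tau$, the quantity $G_1(\Cut_u\tau,u)$ genuinely depends only on $(\Cut_u\tau,u)$, and
\[\mathbf{1}_{\{\HS(\tau)\leq m,\,|\tau|\geq n\}}\leq\sum_{u\in\tau}\mathbf{1}_{\{|u|=n_*\}}\,G_1(\Cut_u\tau,u)\,\mathbf{1}_{\{|\theta_u\tau|\geq n-n_*\}}.\]

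\textbf{Many-To-One.} Taking expectations and applying Proposition~\ref{many-to-one} with $G_2(t)=\mathbf{1}_{\{|t|\geq n-n_*\}}$ gives
\[\P(\HS(\tau)\leq m,\,|\tau|\geq n)\leq\E\bigl[G_1(\Cut_{U_{n_*}}\tau_\infty,U_{n_*})\bigr]\cdot\rho_{n-n_*}.\]
By Definition~\ref{IGWdef}, the spine degrees $k_{U_{i-1}}(\tau_\infty)$ for $i=1,\ldots,n_*$ are independent with $\P(k_{U_{i-1}}(\tau_\infty)=j)=j\mu(j)$, and conditionally on them the off-spine subtrees are i.i.d.\ $\GW(\mu)$-trees, each having $\HS\leq m$ with probability $1-q_m$. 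This yields
\[\E\bigl[G_1(\Cut_{U_{n_*}}\tau_\infty,U_{n_*})\bigr]=\prod_{i=1}^{n_*}\sum_{j\geq 1}j\mu(j)(1-q_m)^{j-1}=\bigl(\varphi'(1-q_m)\bigr)^{n_*}=(1-\psi'(q_m))^{n_*}.\]

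\textbf{Controlling $\rho_n$ and conclusion.} Conditioning on the offspring of the root gives $\rho_{k+1}=1-\varphi(1-\rho_k)=\rho_k-\psi(\rho_k)$, so with Lemma~\ref{psi'(s)_psi(s)/s}$(ii)$ we obtain $\rho_{k+1}\geq\rho_k(1-\psi'(\rho_k))$. As $(\rho_k)$ is non-increasing and $\psi'$ non-decreasing, $\psi'(\rho_k)\leq\psi'(\rho_{n_*})$ for $k\geq n_*$; iterating $n_*$ times from $k=n-n_*$ (which is $\geq n_*$) yields $\rho_n\geq\rho_{n-n_*}(1-\psi'(\rho_{n_*}))^{n_*}$. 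Combining with the previous estimate and the hypothesis $\psi'(\rho_{n_*})\leq\tfrac12\psi'(q_m)$,
\[\P(\HS(\tau)\leq m\mid|\tau|\geq n)\leq\left(\frac{1-\psi'(q_m)}{1-\tfrac12\psi'(q_m)}\right)^{n_*}\leq\exp\bigl(-\tfrac12 n_*\psi'(q_m)\bigr),\]
where the last inequality uses the elementary $(1-q)/(1-q/2)\leq e^{-q/2}$ valid for $q\in[0,1]$. Since $\psi'(1)=1-\mu(1)>0$ by (\ref{critimu}) and $\psi'(q_m)\leq\psi'(1)$, the hypothesis forces $n_*\geq 1$, hence $n\geq 2$ and $n_*\geq n/4$, which gives the stated bound $\exp(-\tfrac18 n\psi'(q_m))$.

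\textbf{Main obstacle.} The one subtle point is the choice of spine depth: going to depth $n$ directly would give the cleaner spine factor $(1-\psi'(q_m))^n$ but leaves $\rho_n$ in the denominator with no useful handle, while depth $n_*$ splits the work between the spine estimate and a multiplicative control on $\rho_{n-n_*}/\rho_n$ that is precisely calibrated to the hypothesis $2\psi'(\rho_{n_*})\leq\psi'(q_m)$.
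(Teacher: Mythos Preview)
Your proof is correct and takes a genuinely simpler route than the paper's. The paper runs the spine all the way to depth $n$, anchoring at the \emph{leftmost} vertex at that height; this yields a product $\prod_{p=0}^{n-1}\frac{\varphi(1-q_m)-\varphi(\varphi_p(0))}{1-q_m-\varphi_p(0)}$ which mixes a height constraint on the left siblings with the $\HS$ constraint on the right siblings, and the paper then splits this product at level $\lfloor n/2\rfloor$ and uses a Hermite--Hadamard estimate on $\psi'$ to extract the factor $(1-\tfrac14\psi'(q_m))^{n/2}$ from the upper half. You instead stop the spine at depth $n_*=\lfloor n/2\rfloor$, obtaining the clean spine factor $(1-\psi'(q_m))^{n_*}$ directly, and you handle the conditioning separately by controlling $\rho_{n-n_*}/\rho_n$ via the one-step recursion $\rho_{k+1}=\rho_k-\psi(\rho_k)\geq\rho_k(1-\psi'(\rho_k))$. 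This avoids Hermite--Hadamard entirely and only needs the easy inequality $\psi(s)/s\leq\psi'(s)$ from Lemma~\ref{psi'(s)_psi(s)/s}$(ii)$. One small wording slip: your description of $G_1$ says ``siblings of every strict ancestor of $u$'', which read literally omits the siblings of $u$ itself, yet your product $\prod_{i=1}^{n_*}$ (correctly) includes that last level; the intended constraint is on all off-spine children along the ancestral line $\varnothing=v_0,\ldots,v_{n_*}=u$, and these subtrees are indeed contained in $\Cut_u\tau$.
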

\begin{proof}
For all $n\in  \mathbb N$, denote by $\varphi_n$ the $n$-iterate of $\varphi$ with the convention $\varphi_0 =  \mathrm{Id}$. It is classic that $\P(|\tau|  <  n)  =  \varphi_n (0)$. Then, observe the following: the $\leq$-smallest vertex of $\tau$ at height $n$ is the only vertex $u\in\tau$ such that $|u|=n$ and such that for all $v \in \tau$ with $v <  u$ and $\overleftarrow{v} \prec u$, 
we have $|\theta_v\tau|+|v|<n$.
Moreover for all $p \in  \mathbb N$, (\ref{monotony_HS}) implies that if $\HS(\tau)\leq m$ then $\HS(\theta_v\tau)\leq m$ for all $v\in\tau$. Therefore,
 \begin{equation}
 \label{1rsthght}
 \I{ |\tau|  \geq  n \, ; \, \HS(\tau)  \leq  m} \leq 
 \sum_{u\in\tau}\I{ |u|=n } \!\!\!\! 
  \prod_{\substack{w\in\tau,i\geq 1\\ w*(i)\preceq u}}\!\! 
  \Big( \prod_{j=1}^{i-1}\I{|\theta_{w*(j)}\tau|+|w*(j)|<n} \Big) \Big(\prod_{j=i+1}^{k_w(\tau)}\I{\HS(\theta_{w*(j)}\tau)\leq m}\Big) .
 \end{equation} 
Here we adopt the following convention: a product over an empty set of indices is taken equal to $1$. Recall Definition~\ref{IGWdef} of the size-biased $\GW(\mu)$-tree $\tau_\infty$ and recall from (\ref{bivari}) in Remark~\ref{jntlw} the joint law of the number of left/right siblings of individuals on the infinite line of descent. We now use the Many-To-One Principle (Proposition~\ref{many-to-one}) after taking the expectation in (\ref{1rsthght}) to get
\begin{equation}
\label{convex_tool_1}
\P\big( |\tau|  \geq  n \, ; \, \HS(\tau) \leq  m \big)\leq \prod_{p=0}^{n-1}\frac{\varphi (1\! -\! q_m) - \varphi(\varphi_p(0))}{1\! -\! q_m  -  \varphi_p(0)}.
\end{equation}
We now use convexity properties of $\varphi$ and $\psi$ given in Lemma~\ref{psi'(s)_psi(s)/s} to get an upper bound of the right-hand side of (\ref{convex_tool_1}). 
First observe that the convexity of $\varphi$ implies that for all real numbers $s,r  \in  [0, 1]$ such that $s  \leq r $, we have 
$\frac{\varphi(r)  - \varphi(s) }{r - s} \leq \frac{1 - \varphi(s)}{1-s}$. Therefore,  
\begin{equation}
\label{convex_tool_2}
\prod_{p=0}^{\lfloor n/2 \rfloor-1}\frac{\varphi (1\! -\! q_m) - \varphi(\varphi_p(0))}{1\! -\! q_m  -  \varphi_p(0)} \leq 
\prod_{p=0}^{\lfloor n/2 \rfloor-1}\frac{1 - \varphi(\varphi_p(0))}{1 - \varphi_p(0)}= 1 - 
\varphi_{\lfloor n/2 \rfloor} (0)\; .
\end{equation}
To get an upper bound of $\frac{\varphi (1 - q_m) - \varphi(\varphi_p(0))}{1 - q_m  -  \varphi_p(0)}$ when 
$p \geq \lfloor n/2 \rfloor$, we perform the following computation: let $s, r\in [0, 1]$ and suppose that $2\psi' (s)  \leq  \psi'(r) $, then
\begin{eqnarray}
\frac{\varphi(1\! -\! r)  - \varphi(1\! -\! s) }{s - r}& = & \frac{1 - \varphi(1\! -\! s) }{s}+
\frac{\varphi(1\! -\! s)- 1 }{s} + \frac{\varphi(1\! -\! r)  - \varphi(1\! -\! s) }{s - r} \nonumber  \\
& =&  \frac{1 - \varphi(1\! -\! s) }{s}  +\frac{\psi (s)}{s}   - \frac{\psi(r) - \psi(s)}{r - s} \nonumber \\
& = &  \frac{1 - \varphi(1\! -\! s) }{s}  +\frac{\psi (s)}{s} - \frac{1}{r - s} \int_s^r \!\! \psi'(x)\, \dd x  \nonumber \\
& \leq & \frac{1 - \varphi(1\! -\! s) }{s} +\psi'(s) - \tfrac{1}{2} \big(\psi'(r) + \psi'(s)  \big)\nonumber \\
& \leq &  \frac{1 - \varphi(1 - s) }{s} -\tfrac{1}{4} \psi'(r) 
 \leq   \frac{1 - \varphi(1 - s) }{s} \big( 1 - \tfrac{1}{4}  \psi'(r) \big) . \label{concav}
\end{eqnarray}
Here, we have used Hermite-Hadamard inequality (\ref{hermhada}) on the convex function $-\psi'$, and the two convexity inequalities $\frac{1}{s}\psi(s)\leq \psi'(s)$ and $0\leq \frac{1}{s} (1\! -\! \varphi(1\! -\! s) )  \leq  \varphi'(1)=1$.  

Let $m, n  \in  \mathbb N$ such that $2\psi'( 1\! - \! \varphi_{\lfloor n/2 \rfloor} (0))  \leq  \psi' (q_m) $. Then, for all $p \geq  \lfloor n/2 \rfloor$, we have
$2\psi'( 1\! - \! \varphi_p(0))  \leq  \psi' (q_m) $. Applying (\ref{concav}) successively with $s =    1\! - \! \varphi_p(0)$ and $r =  q_m$ gets us 
\begin{align*}
 \P(|\tau|\geq n\,;\, \HS(\tau)\leq m)&\leq  
\big( 1 - \varphi_{\lfloor n/2 \rfloor} (0) \big)  \big( 1 - \tfrac{1}{4} \psi'(q_m) \big)^{n/2}   \prod_{p=\lfloor n/2 \rfloor }^{n-1}  \frac{1 - \varphi_{p+1}(0)}{1 -  \varphi_p(0)}\\
& = \big( 1 - \tfrac{1}{4} \psi'(q_m) \big)^{n/2} 
 \P (|\tau|  \geq  n), 
\end{align*}
by (\ref{convex_tool_1}) and  (\ref{convex_tool_2}). This easily entails (\ref{heightcond}) since $\ln (1\! -\! x) \leq -x$ for all $x\in [0, 1)$. 
 \end{proof}

\noi
Although Proposition~\ref{maj_hauteur_selon_H_general} holds under the sole assumption that $\mu$ is critical and non-trivial, its application requires knowing the behavior of the tail of the height of the $\GW(\mu)$-tree. When $\mu$ is in the domain of attraction of a stable law of index $\alpha\!\in\!(1,2]$, Proposition~\ref{lockemp} $(iv)$ provides such information, and Proposition~\ref{maj_hauteur_selon_H_general} then yields the more convenient result below.
\begin{corollary}
\label{maj_hauteur_selon_H_stable}
Let $\tau$ be a $\GW(\mu)$-tree. Assume that $\mu$ is critical and non-trivial. If $\mu$ belongs to the domain of attraction of a stable law of index $\alpha\in(1,2]$, then there exists a constant $C_\mu\in(0,\infty)$ that only depends on $\mu$ such that for all integers $n ,m \in  \mathbb N$,
\begin{equation}
\label{heightcond_stable}
\P\big(\HS(\tau)  \leq  m\, \big|\, |\tau|  \geq  n\big) \leq C_\mu \exp \big(  \!-\! \tfrac{1}{8}\,  n \psi'(q_m)\big). 
\end{equation}
\end{corollary}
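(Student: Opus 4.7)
The strategy is to reduce the statement to Proposition~\ref{maj_hauteur_selon_H_general} and absorb the technical hypothesis $2\psi'(\P(|\tau|\geq\lfloor n/2\rfloor))\leq\psi'(q_m)$ into the multiplicative constant $C_\mu$. Write $p_k=\P(|\tau|\geq k)$ for short. By Proposition~\ref{lockemp}~$(iv)$, we have $p_k/\psi(p_k)\sim(\alpha-1)k$, whence $\psi(p_k)/p_k\sim 1/((\alpha-1)k)$ as $k\to\infty$. Combined with the inequality $\psi'(s)\leq 2\psi(s)/s$ from Lemma~\ref{psi'(s)_psi(s)/s}~$(ii)$, this yields a constant $K_\mu\in(0,\infty)$ and an integer $N_\mu\in\mathbb{N}$, both depending only on $\mu$, such that $\psi'(p_{\lfloor n/2\rfloor})\leq K_\mu/n$ for every $n\geq N_\mu$.

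I would then split the analysis into three regimes according to $n$ and $m$. In the first regime, $n\geq N_\mu$ and the hypothesis of Proposition~\ref{maj_hauteur_selon_H_general} holds, i.e., $2\psi'(p_{\lfloor n/2\rfloor})\leq\psi'(q_m)$: in this case the proposition directly gives $\P(\HS(\tau)\leq m\mid|\tau|\geq n)\leq\exp(-n\psi'(q_m)/8)$, so any $C_\mu\geq 1$ works. In the second regime, $n\geq N_\mu$ but the hypothesis fails: then $\psi'(q_m)<2\psi'(p_{\lfloor n/2\rfloor})\leq 2K_\mu/n$, so $n\psi'(q_m)<2K_\mu$, hence $\exp(-n\psi'(q_m)/8)\geq\exp(-K_\mu/4)$; the trivial bound $\P(\HS(\tau)\leq m\mid|\tau|\geq n)\leq 1$ then gives the result with $C_\mu\geq\exp(K_\mu/4)$. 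In the third regime, $n<N_\mu$: since $\psi'$ is increasing on $(0,1]$ we have $\psi'(q_m)\leq\psi'(1)=1-\mu(1)\leq 1$, hence $n\psi'(q_m)\leq N_\mu$, and again the trivial bound yields the inequality with $C_\mu\geq\exp(N_\mu/8)$.

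Finally I would set $C_\mu$ to be the maximum of the constants produced by the three regimes, which depends only on $\mu$ through $K_\mu$ and $N_\mu$, both of which depend only on $\mu$. The only subtle point is quantitatively controlling $N_\mu$ and $K_\mu$ from Proposition~\ref{lockemp}~$(iv)$, but since this proposition is purely asymptotic and we only need existence of such constants (not explicit values), this is immediate. I do not expect any serious obstacle; the corollary is essentially a repackaging of Proposition~\ref{maj_hauteur_selon_H_general} using the tail estimate on $|\tau|$ valid in the stable regime to eliminate the awkward side condition.
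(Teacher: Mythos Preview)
Your proof is correct and follows essentially the same approach as the paper's: both reduce to Proposition~\ref{maj_hauteur_selon_H_general}, use Proposition~\ref{lockemp}~$(iv)$ together with Lemma~\ref{psi'(s)_psi(s)/s}~$(ii)$ to bound $\psi'(p_{\lfloor n/2\rfloor})$ by $O(1/n)$, and absorb the failure of the side condition into the constant via the trivial bound. The only cosmetic difference is that the paper observes directly that $\sup_{n\in\mathbb N}(n+1)\psi(s_n)/s_n<\infty$, giving a bound valid for \emph{all} $n$ and collapsing your three regimes into two.
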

\begin{proof}
To simplify notation, set $s_n=\P(|\tau|\geq \lfloor n/2\rfloor)$ for all $n\in\bbN$. By Proposition~\ref{lockemp} $(iv)$, the constant $2c_\mu:=\sup_{n\in\bbN}(n+1)\frac{\psi(s_n)}{s_n}$ is finite and positive. Then, we get from Lemma~\ref{psi'(s)_psi(s)/s} $(ii)$ that $2\psi'(s_n)\leq 8c_\mu/(n+1)$ for all $n\in\bbN$, by definition of $c_\mu$. Let us set $C_\mu=e^{c_\mu}\in(0,\infty)$ and let $n,m\in\bbN$. If $\psi'(q_m)\geq 8 c_\mu/(n+1)$, then $2\psi'(s_n)\leq \psi'(q_m)$ so Proposition~\ref{maj_hauteur_selon_H_general} yields (\ref{heightcond_stable}) because $1\leq C_\mu$. Otherwise, $\psi'(q_m)<8c_\mu/(n+1)$ and $C_\mu \exp(-\frac{1}{8}n\psi'(q_m))\geq 1$ by choice of $C_\mu$. Thus, (\ref{heightcond_stable}) clearly holds in that case too.
\end{proof}

Our last estimate relies on the same intuition as Proposition~\ref{maj_hauteur_selon_H_general}: if the maximal out-degree of a Galton--Watson tree is large, then the tree contains several independent copies of itself. 
\begin{proposition}
\label{min_H_selon_D} 
Let $\tau$ be a $\GW(\mu)$-tree and assume that $\mu$ is critical and non-trivial. Recall from (\ref{heightdelta}) that $\Delta (\tau)$ stands for the maximal out-degree of $\tau$. 
Then, for all integers $n,m\geq 1$ such that $\P(\Delta(\tau)\geq n)>0$, the following inequality holds true:
\begin{equation}
\label{Deltesti}
\P\big(\HS(\tau)  \leq   m\, \big|\, \Delta(\tau)  \geq  n\big)\leq e^{-nq_m}.
\end{equation}
Moreover, for all $\lambda >0$, we also have 
\begin{equation}
\label{Deltesti2}
\E\big[e^{-\lambda \# \tau } \, \big|\, \Delta(\tau)  \geq  n\big] \leq  \E \big[ e^{-\lambda \# \tau }\big]^n .
\end{equation}
\end{proposition}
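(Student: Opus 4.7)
The plan is a branching-property argument centered on a canonical vertex of out-degree at least $n$. For a finite tree $t$ with $\Delta(t)\geq n$, let $V(t)\in t$ denote the lexicographically smallest vertex with $k_{V(t)}(t)\geq n$, and write $E_{u^\ast}=\{V(\tau)=u^\ast\}$ for each $u^\ast\in\bbU$. The family $(E_{u^\ast})_{u^\ast\in\bbU}$ partitions $\{\Delta(\tau)\geq n\}$, and both $E_{u^\ast}$ and the integer $k_{u^\ast}(\tau)$ are measurable with respect to $\tau_{\leq u^\ast}:=\{v\in\tau:v\leq u^\ast\}$; note that none of the children $u^\ast*(j)$, $1\leq j\leq k_{u^\ast}(\tau)$, belongs to $\tau_{\leq u^\ast}$, since each satisfies $u^\ast*(j)>u^\ast$ in lexicographic order.

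The central ingredient, which I would derive directly from Definition~\ref{GWfordef} by induction along the depth-first traversal of $\tau_{\leq u^\ast}$, is the following branching property: conditionally on $\tau_{\leq u^\ast}$, the subtrees $\theta_{u^\ast*(j)}\tau$ for $1\leq j\leq k_{u^\ast}(\tau)$ are i.i.d.\ $\GW(\mu)$-trees. Pushing this conditional independence cleanly through the non-trivial stopping event $E_{u^\ast}$ is the main bookkeeping step and, while standard, is the only delicate point of the argument.

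For (\ref{Deltesti}), the map $w\mapsto u^\ast*(j)*w$ embeds $\theta_{u^\ast*(j)}\tau$ into $\tau$ (it preserves the most-recent-common-ancestor operation), so (\ref{monotony_HS}) forces $\HS(\theta_{u^\ast*(j)}\tau)\leq m$ for every $j$ on $E_{u^\ast}\cap\{\HS(\tau)\leq m\}$. The conditional independence then gives
\[
\P\bigl(\HS(\tau)\leq m,\,E_{u^\ast}\bigr)\leq \E\bigl[\un_{E_{u^\ast}}(1-q_m)^{k_{u^\ast}(\tau)}\bigr]\leq (1-q_m)^n\,\P(E_{u^\ast}),
\]
and summing over $u^\ast$, dividing by $\P(\Delta(\tau)\geq n)$, and using $1-q_m\leq e^{-q_m}$ yields (\ref{Deltesti}). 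For (\ref{Deltesti2}), the decomposition $\#\tau=\#\Cut_{u^\ast}\tau+\sum_{j=1}^{k_{u^\ast}(\tau)}\#\theta_{u^\ast*(j)}\tau$ valid on $E_{u^\ast}$ gives $e^{-\lambda\#\tau}\leq\exp\bigl(-\lambda\sum_{j=1}^{k_{u^\ast}(\tau)}\#\theta_{u^\ast*(j)}\tau\bigr)$, so the same conditional independence produces $\E[e^{-\lambda\#\tau}\un_{E_{u^\ast}}]\leq \E\bigl[\un_{E_{u^\ast}}\E[e^{-\lambda\#\tau}]^{k_{u^\ast}(\tau)}\bigr]\leq \P(E_{u^\ast})\E[e^{-\lambda\#\tau}]^{n}$, because $\E[e^{-\lambda\#\tau}]\in(0,1]$ and $k_{u^\ast}(\tau)\geq n$ on $E_{u^\ast}$; summing over $u^\ast$ and dividing by $\P(\Delta(\tau)\geq n)$ finishes the proof.
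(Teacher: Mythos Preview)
Your approach is essentially the paper's: both isolate the lexicographically first vertex of out-degree $\geq n$ and exploit the conditional independence of the subtrees stemming from its children; the paper packages this as an indicator-sum inequality followed by the Many-To-One Principle (Proposition~\ref{many-to-one}), whereas you partition on the location of that vertex and invoke the branching property directly.

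One imprecision to correct: $k_{u^\ast}(\tau)$ is \emph{not} measurable with respect to $\tau_{\leq u^\ast}=\{v\in\tau:v\leq u^\ast\}$ as you define it, since none of the children of $u^\ast$ lie in this set (and, for the same reason, the full out-degrees $k_v(\tau)$ of the strict ancestors $v\prec u^\ast$ are not determined by $\tau_{\leq u^\ast}$ either, yet they are needed to decide $E_{u^\ast}$). The right sigma-field is the one generated by the Lukasiewicz path up to the step that reveals $k_{u^\ast}(\tau)$, i.e.\ by $\tau_{\leq u^\ast}$ together with $(k_v(\tau))_{v\preceq u^\ast}$; with that adjustment your conditional-independence step is the standard branching property along the depth-first exploration, and the remainder of your argument is correct as written.
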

\begin{proof} When $\HS(\tau) \leq m$ and $\Delta (\tau) \geq  n$, we decompose $\tau$ along the ancestral line of the $\leq$-first vertex $u$ with $k_u (\tau) \! \geq \! n$ and, by (\ref{monotony_HS}), we see $\HS( \theta_{u \ast (j)} \tau) \! \leq \! m$ for all $1\! \leq \! j\!\leq\! n$. 
Hence, 
\[\I{ \HS(\tau)\leq m \, ; \, \Delta(\tau)\geq n} \leq \sum_{u\in\tau}\I{k_u(\tau)\geq n}\Big(\!\! \!\! \!\!\prod_{\quad v\in\tau: v<u}\!\! \!\! \!\!\I{k_v(\tau)<n}\Big)\Big(\prod_{j=1}^n\I{\HS(\theta_{u*(j)}\tau)\leq m}\Big).\]
We take the expectation and apply the Many-To-One Principle (Proposition~\ref{many-to-one}) `forwards and backwards' to obtain 
\begin{align*}
\P\big(\HS(\tau)\leq m\, ;\, \Delta(\tau)\geq n\big) & \leq  \E\bigg[\sum_{u\in\tau}\I{k_u(\tau)\geq n}
 \!\! \!\! \!\! \prod_{\quad v\in\tau: v<u}\!\! \!\! \!\!  
 \I{k_v(\tau)<n} \bigg] \P \big( \HS(\tau)  \leq  m\big)^n \\
&= \P \big( \Delta (\tau)  \geq  n\big)  \big( 1 - q_m \big)^n, 
\end{align*}
which entails (\ref{Deltesti}) since $\ln (1\! -\! x) \leq -x$ for all $x\in [0, 1)$. To prove (\ref{Deltesti2}), we observe that 
\[ e^{-\lambda \# \tau } \I{  \Delta(\tau)\geq n} \leq \sum_{u\in\tau}\I{k_u(\tau)\geq n}\Big(
\!\! \!\! \!\! \prod_{\quad v\in\tau: v<u}\!\! \!\! \!\! 
\I{k_v(\tau)<n}\Big)\Big(\prod_{j=1}^n e^{-\lambda \# \theta_{u*(j)}\tau }\Big).\]
Then, as in the previous argument, we take the expectation and apply the Many-To-One Principle (Proposition~\ref{many-to-one}) `forwards and backwards' to get the desired result. 
\end{proof}

\section{Asymptotics of the Horton--Strahler number: the cases where $\alpha\in(1,2]$}
\label{pfThmsec_alpha}

In all this section, whose goal is to prove Proposition~\ref{HS_tails_alpha} and Theorem~\ref{stable_HS_order}, we fix a probability measure $\mu$ on $\bbN$ and let $\tau$ denote a $\GW(\mu)$-tree. Moreover, we shall assume that $\mu$ is critical and non-trivial as in (\ref{critimu}), i.e.~that $\sum_{k\in\bbN}k\mu(k)=1$ and $\mu(0)>0$, and that $\mu$ belongs to the domain of attraction of an $\alpha$-stable law with $\alpha \in (1,2]$. More precisely, we shall assume that $(a$-$d)$ in Proposition~\ref{limthtpsfix} hold. Finally, recall from (\ref{psi_def}) the function $\psi$ given by $\psi(s)=-(1-s)+\sum_{k\in\bbN}(1-s)^k\mu(k)$ for all $s\in[0,1]$.

\subsection{Proof of Proposition~\ref{HS_tails_alpha}}
\label{tailHSsec_alpha}

To derive the desired estimate of the tail of the Horton--Strahler number of $\tau$, let us first recall from (\ref{Ups}) the functions $\Lambda$ and $\Upsilon$, defined as follows:
\[\forall s\in (0, 1), \quad \Lambda(s)=\frac{s\psi'(s)}{s\psi'(s)-\psi(s)}\quad \text{ and } \quad  \Upsilon(s)=\int_s^1\frac{\dd r}{r\ln\Lambda(r)}.\]
Under the assumption that $\mu$ belongs to the domain of attraction of an $\alpha$-stable law with $\alpha\in(1,2]$, the following lemma characterizes the asymptotic behavior of $\Lambda$ and $\Upsilon$ near $0^+$. 

\begin{lemma}
\label{behavior_Lambda_alpha}
Assume that $\mu$ is critical and non-trivial and is in the domain of attraction of a stable law of index $\alpha\in (1,2]$. Then $\lim_{s\to 0^+}\Lambda(s)  = \frac{\alpha}{\alpha-1}$ and $\Upsilon  (s) \sim_{0^+} \log_{\frac{\alpha}{\alpha-1}}1/s$.
\end{lemma}

\begin{proof} 
By Proposition~\ref{limthtpsfix}, $\psi$ is regularly varying of index $\alpha$ at $0^+$. Then, recall that $\psi'$ is increasing, and since $\psi (s) =  \int_0^s \! \psi'(x) \, \dd x$, the Monotone Density Theorem (given as Proposition~\ref{potter} $(iv)$) implies that $s\psi'(s)\sim_{0^+}  \alpha \psi (s)$. This entails that $\lim_{s\to 0^+}\Lambda(s)  = \frac{\alpha}{\alpha-1}$ and thus $\Upsilon  (s) \sim_{0^+} \log_{\frac{\alpha}{\alpha-1}}1/s$.
\end{proof}

\begin{proof}[Proof of Proposition~\ref{HS_tails_alpha}]
Set $q_n =  \P (\HS(\tau) \! >\! n ) $ and note that $q_n\to 0$. Proposition~\ref{equHS} asserts that 
$q_n - q_{n+1} =  \psi (q_n)/ \psi'(q_n)$, namely $q^{-1}_{n+1} =  q^{-1}_n \Lambda (q_n)$. To simplify, set 
$Q_n = -\ln q_n$ so that $e^{-Q_n}\to 0^+$. Therefore, we have $Q_{n+1}=Q_n+\ln\Lambda(e^{-Q_n})$ for any $n\in \mathbb N$. By Lemma~\ref{behavior_Lambda_alpha}, $\ln\Lambda(e^{-Q_n})\longrightarrow \ln \frac{\alpha}{\alpha-1}$. Cauchy's limit theorem then yields that
\[\frac{Q_n-Q_0}{n}=\frac{1}{n}\sum_{i=0}^{n-1} \ln\Lambda(e^{-Q_i})\longrightarrow \ln\frac{\alpha}{\alpha-1},\] 
which is exactly the desired asymptotic estimate (\ref{Ytail_alpha}). 
\end{proof}

\subsection{Proof of Theorem~\ref{stable_HS_order}}
\label{pfThm1sec}

Let $(a_n)$ be as in $(d)$ in Proposition~\ref{limthtpsfix}, which we assume to be true. Here, we also suppose that $\mu$ is aperiodic, so that $\P(\#\tau=n)>0$ for all $n$ large enough. To lighten notation, let us set $\gamma=\ln \tfrac{\alpha}{\alpha-1}$ so that (\ref{Ytail_alpha}) becomes
\begin{equation}
\label{HS_tails_stable}
-\ln \P(\HS(\tau)>n)\sim \gamma n.
\end{equation}
Recall from (\ref{heightdelta}) that $|\tau|$ denotes the height of $\tau$. The proof of Theorem~\ref{stable_HS_order} relies on the relation between $\HS(\tau)$ and $|\tau|$, and consists of two parts: a lower bound and an upper bound.

\begin{proof}[Proof of the lower bound in Theorem~\ref{stable_HS_order}]
We prove for all $\varepsilon \in  (0, 1)$ that 
\begin{equation}
\label{lwbound1}
\P\big( \alpha\gamma\HS(\tau)  \leq  (1 - \varepsilon)\ln n\ \big|\ \#\tau=n\big)\xrightarrow[n\to \infty]{} 0.
\end{equation}
The idea is to apply Proposition~\ref{maj_hauteur_selon_H_general}, or rather Corollary~\ref{maj_hauteur_selon_H_stable}, and 
to switch the size of $\tau$ in the conditioning with the heigh of $\tau$. To that end, we use Proposition~\ref{height_stable_cond} $(i)$ to find 
\[\limsup_{\eta\rightarrow0}\limsup_{n\rightarrow\infty}\P\big( \, |\tau| \leq \eta \tfrac{n}{a_n}\ \big|\ \#\tau=n\big)=0.\]
Therefore, to prove (\ref{lwbound1}), we only need to prove for all $\eta  \in (0, 1)$  that
\[\limsup_{n\rightarrow\infty}\P\big( \, |\tau|> \eta \tfrac{n}{a_n} \, ; \, \alpha\gamma\HS(\tau)  \leq  (1\! -\! \varepsilon)\ln n\ \big|\ \#\tau=n\big)=0.\]
We roughly bound the conditional probability by the ratio of the probabilities as follows. 
\begin{equation}
\label{staiaip}
\P\big( |\tau| \! > \! \eta \tfrac{n}{a_n} \, ; \, \alpha\gamma\HS(\tau)  \leq  (1\! -\! \varepsilon)\ln n\, \big|\, \#\tau \! =\! n \, \big)
\leq\frac{\P\big( \alpha\gamma\HS(\tau)  \leq  (1\! -\! \varepsilon)\ln n\, \big|\, |\tau| \! > \! \eta \tfrac{n}{a_n} \, \big) }{\P(\#\tau\! =\! n)} \! .
\end{equation}
Since $a_n\sim n^{\frac{1}{\alpha}+o(1)}$ by Proposition~\ref{limthtpsfix} and Potter's bound (see Proposition~\ref{potter} $(i)$), Proposition~\ref{lockemp} $(iii)$ entails the following for the denominator of the right-hand side of (\ref{staiaip}):
\begin{equation}
\label{lwlwbbdd}
\P (\# \tau  =  n )\sim \frac{c_\alpha}{n a_n} \sim n^{-1-\frac{1}{\alpha}+o(1)}.
\end{equation}
To bound the right-hand side of (\ref{staiaip}), we want to apply Corollary~\ref{maj_hauteur_selon_H_stable} and to do so, we first control $\psi' \big( \P \big(  \HS(\tau)\!  > \! \tfrac{1-\varepsilon}{\alpha \gamma} \ln n \big)\big)$. By Potter's bound and $(c)$ in Proposition~\ref{limthtpsfix}, we get that $\ln \psi(s)\sim_{0^+}\alpha\ln s$. Lemma~\ref{psi'(s)_psi(s)/s} $(ii)$ then yields that $\ln \psi'(s)\sim_{0^+} (\alpha-1)\ln s$. Together with (\ref{HS_tails_stable}), this implies that
\begin{equation}
\label{lwbbdd}
\psi' \big( \P \big(  \HS(\tau)  >  \tfrac{1-\varepsilon}{\alpha \gamma} \ln n \big) \big)\sim n^{-(1-\varepsilon) \frac{\alpha -1}{\alpha} +o(1)}.
\end{equation}
We finally apply Corollary~\ref{maj_hauteur_selon_H_stable}, and then (\ref{lwlwbbdd}) and (\ref{lwbbdd}), to get that 
\[\P \big(  \HS(\tau)  \leq  \tfrac{1-\varepsilon}{\alpha \gamma} \ln n \ \big| \  |\tau| \! >\! \tfrac{\eta n}{a_n}  \big)\leq C_\mu\exp\Big(\! -\! \tfrac{\eta n}{8 a_n} \,  \psi'\big( \P \big(  \HS(\tau)\!  > \! \tfrac{1-\varepsilon}{\alpha \gamma} \ln n \big) \big)\Big) \leq \exp \big( -n^{\varepsilon\frac{\alpha-1}{2\alpha}}\big)\]
for all large $n$.  The previous upper bound combined with (\ref{staiaip}) and (\ref{lwlwbbdd}) implies (\ref{lwbound1}). 
\end{proof}

\begin{proof}[Proof of the upper bound in Theorem~\ref{stable_HS_order}]
We shall prove for all $\varepsilon  \in  (0, 1)$ that 
\begin{equation}
\label{upbound1}
\P\big( \alpha\gamma\HS(\tau) > (1 + 2\varepsilon)\ln n\ \big|\ \#\tau=n\big)\xrightarrow[n\to \infty]{} 0.
\end{equation}
To that end, we first show that
\begin{equation}
\label{maj_stable_atleast}
 \P\big( \alpha\gamma\HS(\tau) > (1+\varepsilon)\ln n\ \big|\ \#\tau \geq  n\big) \xrightarrow[n\to \infty]{} 0.
\end{equation}
To do this, we first write the following rough bound 
\[ \P\big( \alpha\gamma\HS(\tau) >  (1+\varepsilon)\ln n\ \big|\ \#\tau \geq  n\big) \leq\frac{\P\big(\alpha\gamma\HS(\tau)> (1+\varepsilon)\ln n\big)}{\P(\#\tau\geq n)}. \]
Then, by Propositions~\ref{lockemp} $(iii)$ and~\ref{limthtpsfix} on the one hand, and by (\ref{HS_tails_stable}) on the other, we get
\[\P(\#\tau \geq  n) \sim  \frac{c_\alpha}{a_n} \sim n^{-\frac{1}{\alpha}+o(1)} \quad  \text{ and } \quad  \P\big(\alpha\gamma\HS(\tau) >  (1\! +\! \varepsilon)\ln n\big)\sim n^{-(1+\varepsilon) \frac{1}{\alpha} +o(1)}, \]
which implies (\ref{maj_stable_atleast}).

\smallskip

Now recall from (\ref{symmtau}) that under $\P (\, \cdot \, | \, \# \tau \! =\! n)$, $\tau$ and its mirror image have the same law. Therefore, we can apply Lemma~\ref{left_right_HS} $(iii)$ with $m =  \lfloor \frac{7}{8}n \rfloor$ and thus get, for all $n$ large enough, 
\begin{multline*}
 \P\big( \HS(\tau)  >  \tfrac{1 + 2\varepsilon}{\alpha\gamma}\ln n\ \big|\ \#\tau=n\big) \leq
2\P\big( \HS(R_{\lfloor \frac{7}{8}n \rfloor }\tau)  >  \tfrac{1 + \varepsilon}{\alpha\gamma}\ln n\ \big|\ \#\tau=n\big)\\
+ 
\P \big(  |\tau| > \tfrac{1}{2}n \ \big|\ \#\tau=n\big).
\end{multline*}
By Proposition~\ref{height_stable_cond} $(i)$, $\P \big( |\tau|  > \tfrac{1}{2}n \, \big|\, \#\tau=n\big) \to 0$ when $n\to \infty$. To control the first term of the right-hand side of the previous inequality, we use Proposition~\ref{height_stable_cond} $(ii)$: to that end, recall from Proposition~\ref{luka_GW} $(ii)$ that $R_{\lfloor \frac{7}{8}n \rfloor }\tau$ is a measurable function of the Lukasiewicz path 
$(W_k(\tau))_{0\leq k \leq \lfloor \frac{7}{8}n \rfloor}$. Therefore, for all $n\in\bbN$ and $c\in(0,\infty)$,
\begin{align*}
x_n&:=  \P\big( \HS(R_{\lfloor \frac{7}{8}n \rfloor } \tau)  > \tfrac{1 + \varepsilon}{\alpha\gamma}\ln n\, \big|\, \#\tau=n\big) \\
& = \E \Big[  \un_{\big\{   \HS(R_{\lfloor \frac{7}{8}n \rfloor }\tau)  > \tfrac{1 + \varepsilon}{\alpha\gamma}\ln n \big\} } D^{(7/8)}_n \big( W_{\lfloor \frac{7}{8}n \rfloor } (\tau)\big)  \, \Big| \, \# \tau \geq  n \Big] \\
& \leq  \E \Big[  \un_{\{   \HS(\tau)  >  \tfrac{1 + \varepsilon}{\alpha\gamma}\ln n \} } D^{(7/8)}_n \big( W_{\lfloor \frac{7}{8}n \rfloor } (\tau)\big)  \, \Big| \, \# \tau \geq  n \Big] \\
& \leq  c \, \P \big( \HS(\tau)  >  \tfrac{1 + \varepsilon}{\alpha\gamma}\ln n  \, \big| \, \#\tau \! \geq \! n \big) + \E \Big[ \un_{\big\{\! D^{(7/8)}_n ( W_{\lfloor \frac{7}{8}n \rfloor } (\tau))  \, \geq c  \big\} } D^{(7/8)}_{n} \big( W_{\lfloor \frac{7}{8}n \rfloor } (\tau)\big)  \, \Big| \, \#\tau \! \geq \! n \Big]
\end{align*}
by (\ref{monotony_HS}). We let $n\to\infty$ then $c\to\infty$, and thus, by (\ref{maj_stable_atleast}) and (\ref{abs_continuity_estimate}) in Proposition~\ref{height_stable_cond}, we get $ \limsup_{n\to \infty} x_n=0$. This implies (\ref{upbound1}) and readily completes the proof of Theorem~\ref{stable_HS_order}.
\end{proof}

\section{Asymptotics of the Horton--Strahler number: the $1$-stable cases}
\label{pfThmsec_1}

The main goals of this section are to prove Proposition~\ref{HS_tails_1} and Theorems~\ref{cauchy_HS_order_atleast} and \ref{cauchy_HS_order_equal}. Throughout this section, denote by $\mu$ a probability measure on $\bbN$ and by $\tau$ a $\GW(\mu)$-tree. We both assume that $\mu$ is critical and non-trivial as in (\ref{critimu}), i.e.~that $\sum_{k\in\bbN}k\mu(k)=1$ and $\mu(0)>0$, and that $\mu$ belongs to the domain of attraction of a $1$-stable law. More precisely, we shall assume that $(a$-$c)$ in Proposition~\ref{limthCauchy1} hold, so that $L$ stands for a slowly varying function at $\infty$ such that $\mu\big([n,\infty)\big)\sim L(n)/n$. Also recall from (\ref{1stableexpl}), or $(c)$ in Proposition~\ref{limthCauchy1}, the sequence $(b_n)$. We make extensive use of the function $\psi:s\in[0,1]\mapsto -(1-s)+\sum_{k\in\bbN} (1-s)^k\mu(k)$, as defined in (\ref{psi_def}), and of functions $\Lambda$ and $\Upsilon$ given by (\ref{Ups}) as follows:
\[\forall s\in (0, 1), \quad \Lambda(s)=\frac{s\psi'(s)}{s\psi'(s)-\psi(s)} \quad \text{ and } \quad \Upsilon(s)=\int_s^1\frac{\dd r}{r\ln\Lambda(r)}.\]

\subsection{Proof of Proposition~\ref{HS_tails_1}}
\label{tailHSsec_1}

Similarly as for the cases where $\alpha\in(1,2]$, we begin by describing the behavior of $\Lambda$ at $0^+$ under the assumption that $\mu$ belongs to the domain of attraction of a $1$-stable law. The following lemma shows that $\Lambda$ varies slowly at $0^+$. 

\begin{lemma}
\label{behavior_Lambda_1}
Assume that $\mu$ is critical and non-trivial and that it belongs to the domain of attraction of a $1$-stable law. More precisely, let $L$ be such that $\mu ([n ,\infty))  \sim  
n^{-1} L(n)$ and $\overline{L}$ be the slowly varying function given by $\overline{L} (x)  =  \int_x^\infty \! y^{-1} L (y) \, \dd y$ for all $x \in  (0, \infty)$ (see Proposition~\ref{limthCauchy1}). Then,    
\[\Lambda(s)\sim_{0^+}\frac{\overline{L}(1/s)}{L(1/s)}\quad\text{ and }\quad\lim_{s\to0^+}\Lambda(s)=\infty.\]
\end{lemma}

\begin{proof} 
Let $\xi$ be a random variable whose distribution is $\mu$. Thus, $x\P(\xi>x)\sim_\infty L(x)$. Proposition~\ref{limthCauchy1} $(ii)$ asserts that $\E[\xi \I{  \xi>x  }] \sim_\infty \overline{L} (x)$ and $\psi (s) \sim_{0^+} s \overline{L} (1/s)$. The Monotone Density Theorem, i.e.~Proposition~\ref{potter} $(iv)$, then entails that $s\psi' (s) \sim_{0^+} s \overline{L} (1/s)$. 
\noi
We next consider $s\psi'(s)\! -\! \psi(s)$, $s\! \in\! (0,1)$ being fixed. To simplify notation, set $\lambda\! =\! -\ln(1\! -\! s)$ 
and first observe that
\begin{eqnarray*}
s\psi'(s)-\psi(s)& =& \E\left[1-e^{-\lambda\xi}-\tfrac{s\xi}{1-s}e^{-\lambda\xi}\right]=\E\left[\int_0^\xi \! \big( \lambda e^{-\lambda x}-\tfrac{s}{1-s} e^{-\lambda x}+\tfrac{s x\lambda}{1-s}e^{-\lambda x} \big)\,\dd x\right],\\
\text{(by Fubini)}& =& 
\tfrac{s\lambda}{1- s}\int_0^\infty\! \P(\xi>x) x e^{-\lambda x} \, \dd x\ \ - \ \ \big( \tfrac{s}{1-s} -\lambda\big)\! \int_0^\infty \! \P(\xi>x)e^{-\lambda x}\, \dd x.
\end{eqnarray*}
We estimate the second term of the right-hand side by writing
\begin{equation}
\label{floum}
\big(\tfrac{s}{1-s}+\ln(1-s)\big) \int_0^\infty\! \P(\xi>x)e^{-\lambda x}\, \dd x  \sim_{0^+}  \tfrac{1}{2} s^2\E[\xi]=\tfrac{1}{2}s^2.
\end{equation}
Next, Karamata's Abelian Theorem for Laplace transform (see Proposition~\ref{potter} $(v)$) yields 
\begin{equation}
\label{floumi}\lambda \int_0^\infty\!  x\P(\xi >x) \,  e^{-\lambda x}\,\dd x \, \underset{\lambda\to 0^+}{\sim} L(1/\lambda)
\end{equation}
Since $\lambda\! =\! -\! \ln(1\! -\! s)$, (\ref{floum}) and (\ref{floumi}) entail that $s\psi'(s)\! -\! \psi (s) \sim_{0^+} s L(1/s)$ thanks to Potter's bound (see Proposition~\ref{potter} $(i)$), which implies the desired estimate for $\Lambda$ because $s\psi' (s) \sim_{0^+} s \overline{L} (1/s)$. Finally, $\lim_{s\to\infty}\Lambda(s)=\infty$ comes from an application of Proposition~\ref{potter} $(iii)$.
\end{proof}

\begin{proof}[Proof of Proposition~\ref{HS_tails_1}]
As in the proof of Proposition~\ref{HS_tails_alpha}, set $Q_n= -\ln q_n = -\ln \P (\HS(\tau) \! >\! n )$ and note that $Q_n\to \infty$. Also recall from the proof of Proposition~\ref{HS_tails_alpha} that
 \begin{equation}
 \label{expequ}
 Q_{n+1}=Q_n+\ln\Lambda(e^{-Q_n}), \quad n\in \mathbb N. 
 \end{equation}
By Lemma~\ref{behavior_Lambda_1}, $\Lambda$ varies slowly at $0^+$. Thus, applying Potter's bound, as stated in Proposition~\ref{potter} $(i)$, to $\ell (y)  =  \Lambda (1/y)$ with $c= e^{\varepsilon}$ and $\lambda  =  e^{x-Q_n}$ ensures the following: for any $\varepsilon  \in   (0, \frac{1}{10})$, there exists $n_\varepsilon  \in \mathbb{N}$ such that for all $n\geq n_\varepsilon$ and all $x\in[Q_n,Q_{n+1}]$, it holds that
\[\big| \ln\Lambda(e^{-x})-\ln \Lambda(e^{-Q_n})\big| = \Big|\ln\Big( \frac{\Lambda (e^{-(x-Q_n)} e^{-Q_n})}{\Lambda (e^{-Q_n})}\Big)  \Big|  \leq \varepsilon+\varepsilon ( x - Q_n),\]
which further yields that $\big| \ln\Lambda(e^{-x})-\ln \Lambda(e^{-Q_n})\big|\leq  \varepsilon+\varepsilon \ln \Lambda(e^{-Q_n})$ by (\ref{expequ}). This implies 
\begin{align*}
\Big| \frac{1}{\ln \Lambda(e^{-x})} - \frac{1}{\ln \Lambda(e^{-Q_n})}\Big| & \leq \frac{\varepsilon}{\ln \Lambda(e^{-Q_n})}\cdot 
 \frac{1+ \ln \Lambda(e^{-Q_n})}{\ln \Lambda(e^{-x}) }  \\
& \leq  \frac{\varepsilon}{\ln \Lambda(e^{-Q_n})}\cdot 
 \frac{1+ \ln \Lambda(e^{-Q_n})}{ (1-\varepsilon)\ln \Lambda(e^{-Q_n})-\varepsilon} \\
 &\leq  \frac{\varepsilon}{\ln \Lambda(e^{-Q_n})}\cdot  \frac{(\ln 2)^{-1}+ 1}{ 1-\varepsilon-\varepsilon (\ln 2)^{-1}} \leq  \frac{100\varepsilon}{\ln \Lambda(e^{-Q_n})}
\end{align*}
because $\Lambda \geq 2$ by Lemma~\ref{psi'(s)_psi(s)/s} $(iii)$, and since $\varepsilon  \in   (0, \frac{1}{10})$. Therefore, by  (\ref{expequ}), we get that
 \[\int_{Q_n}^{Q_{n+1}}\Big| \frac{1}{\ln \Lambda(e^{-x})} - \frac{1}{\ln \Lambda(e^{-Q_n})}\Big| \dd x\leq 100 \varepsilon \cdot \frac{Q_{n+1} - Q_n}{\ln \Lambda(e^{-Q_n})}=100\varepsilon.\]
Hence, it follows that for all $\varepsilon  \in   (0, \frac{1}{10})$ and all $n \geq  n_\varepsilon$,
\[\bigg| n-n_\varepsilon- \int_{Q_{n_\varepsilon}}^{Q_n}\frac{\dd x}{\ln\Lambda(e^{-x})} \bigg| = \bigg|  \sum_{n_\varepsilon \leq k< n} \frac{Q_{k+1} - Q_k}{\ln \Lambda(e^{-Q_k})} - \int_{Q_{n_\varepsilon}}^{Q_n}\frac{\dd x}{\ln\Lambda(e^{-x})} \bigg| \leq  100\varepsilon (n-n_\varepsilon).\]
Thus, $\lim_{n\to \infty}\frac{1}{n}\int_{Q_0}^{Q_n}\frac{\dd x}{\ln\Lambda(e^{-x})}\! = \! 1$. This proves (\ref{Ytail_1}) after the change of variable $r=e^{-x}$. 
\end{proof}

Lemma~\ref{behavior_Lambda_1} also entails some information on $\Upsilon$, useful to show Theorems~\ref{cauchy_HS_order_atleast} and \ref{cauchy_HS_order_equal}.

\begin{proposition}
\label{Ups_SV}
Assume that $\mu$ is critical and non-trivial and that it belongs to the domain of attraction of a $1$-stable law. Then, $\Upsilon (s)=o(\ln 1/s)$ and $\ln\ln 1/s=o(\Upsilon(s))$ as $s\to 0^+$. Moreover, for all $\lambda\in  (0, \infty)$ and all $\kappa\in\mathbb{R}$, it holds that
\[\Upsilon(\lambda s\ln^\kappa 1/s)\sim_{0^+}\Upsilon(s)\quad\text{ and }\quad\Upsilon\big(s\Lambda(s)^\kappa\big)\sim_{0^+}\Upsilon(s).\]
In particular, $\Upsilon$ is slowly varying at $0^+$.
\end{proposition}

\begin{proof}
By Lemma~\ref{behavior_Lambda_1}, $\ln\Lambda(r)\longrightarrow\infty$ as $r\to 0^+$, so $\Upsilon(s)$ becomes negligible compared to $\int_s^1 \!  \tfrac{\dd r}{r} =\ln 1/s$ as $s\to 0^+$. We also know from Lemma~\ref{behavior_Lambda_1} that $\Lambda$ is slowly varying at $0^+$, so $\ln\Lambda(r)=o(\ln 1/r)$ as $r\to 0^+$ by Proposition~\ref{potter} $(i)$. This allows us to write 
\[\ln\ln 1/s\sim_{0^+}\int_s^{1/2}\frac{\dd r}{r\ln 1/r}\underset{s\to0^+}{=}o\big(\Upsilon(s)\big).\]
Next, without loss of generality, we may assume that $\lambda  \in  [1, \infty)$, so that $1\leq \lambda \ln^{|\kappa|} 1/s$ when $s$ is small enough. Recall from Lemma~\ref{psi'(s)_psi(s)/s} $(iii)$ that $\Lambda \geq 2$. Thus,
\[\big|\Upsilon(\lambda s \ln^\kappa 1/s)-\Upsilon(s)\big|\leq \int_{\frac{s}{\lambda}\ln^{-|\kappa|} 1/s}^{\lambda s\ln^{|\kappa|} 1/s}\frac{\dd r}{r\ln \Lambda(r)}\leq\frac{1}{\ln 2}\ln\big(\lambda^2 \ln^{2|\kappa|} 1/s\Big)\underset{s\to 0^+}{=}o\big(\Upsilon(s)\big),\]
which is the third desired estimate. 
Then, observe that $\lim_{s\to 0^+}s\Lambda(s)^{|\kappa|}=0$ because $\Lambda$ slowly varies at $0^+$. Another application of Proposition~\ref{potter} $(i)$, together with $\Lambda\geq 2$, entails that
$\ln\Lambda(r)\geq \frac{1}{2}\ln \Lambda(s)$ for all small enough $s$ and for all $r\in(0,1)$ such that $s\Lambda(s)^{-|\kappa|}\leq r\leq s\Lambda(s)^{|\kappa|}$. Therefore, we can complete the proof by writing
\[\big|\Upsilon(s\Lambda(s)^\kappa)-\Upsilon(s)\big|\leq \int_{s\Lambda(s)^{-|\kappa|}}^{s\Lambda(s)^{|\kappa|}}\frac{\dd r}{r\ln \Lambda(r)}\leq \frac{2}{\ln\Lambda(s)}\ln \Lambda(s)^{2|\kappa|}=4|\kappa|\underset{s\to 0^+}{=}o\big(\Upsilon(s)\big).\text{\qedhere}\]
\end{proof}

\subsection{Proof of Theorem~\ref{cauchy_HS_order_atleast}}
\label{pfThm2sec}

First, recall from Proposition~\ref{limthCauchy1} $(iii)$ that $b_n$ varies regularly with index $1$, which implies that $\ln b_n \sim \ln n$ by Proposition~\ref{potter} $(i)$. Thus, the comparisons $\Upsilon(\frac{1}{b_n}) = o(\ln n)$ and $\ln\ln n = o\big(\Upsilon(\frac{1}{b_n})\big)$ follow from Proposition~\ref{Ups_SV}. 
For the rest of the proof of Theorem~\ref{cauchy_HS_order_atleast}, it is convenient to set 
\[ \forall x\in [0, \infty), \quad q_x = \P\big( \HS(\tau) > x \big).\]
Recall from (\ref{heightdelta}) that $\Delta (\tau)$ stands for the maximal out-degree of $\tau$, which is the relevant quantity to consider in the $1$-stable cases. Note that $\P(\Delta(\tau)\!\geq\! n)\geq\mu\big([n,\infty)\big)>0$ for all $n\in\bbN$ because $n\mu\big([n,\infty)\big)$ is slowly varying. We first prove the following estimates.

\begin{lemma}
\label{estitechni}
Assume that $\mu$ is critical and non-trivial and that it belongs to the domain of attraction of a $1$-stable law. Then, the following holds for all $\varepsilon  \in  (0, 1)$ and all $\kappa  \in  (0, \infty)$. 
\begin{itemize}
\item[$(i)$] There is $n_0\in\bbN$ that depends on $\varepsilon$ and $\kappa$ such that for all integers $n \geq  n_0$, 
\[ b_n\,  q_{(1+ \varepsilon) \Upsilon(\frac{1}{b_n})} \leq  (\ln n)^{-\kappa} \quad \text{ and } \quad  b_n \,  q_{(1- \varepsilon) \Upsilon(\frac{1}{b_n})} \geq  (\ln n)^{\kappa}.\]

\item[$(ii)$] It holds that $ \lim_{n\to \infty} b_n \Lambda (\frac{1}{b_n}) \, q_{(1+ \varepsilon) \Upsilon(\frac{1}{b_n})} = 0$. 

\item[$(iii)$] It holds that $\lim_{n\to \infty}  n^\kappa \, \P \big(\HS(\tau)  \leq  (1\! -\! \varepsilon) \Upsilon \big(\frac{1}{b_n} \big) \, \big| \, \Delta (\tau)  >  \tfrac{1}{2} b_n \big) = 0$. 
\end{itemize}
\end{lemma}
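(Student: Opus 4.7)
The three parts all rest on Proposition \ref{HS_tails}, which provides $\Upsilon(q_n)\sim n$, together with the two slow-variation invariances for $\Upsilon$ collected in Proposition \ref{Ups_SV}. Throughout, set
\[q^\pm:=q_{\lfloor(1\pm\varepsilon)\Upsilon(1/b_n)\rfloor},\]
so that $\lfloor(1\pm\varepsilon)\Upsilon(1/b_n)\rfloor\to\infty$ and Proposition \ref{HS_tails} directly yields $\Upsilon(q^\pm)\sim(1\pm\varepsilon)\Upsilon(1/b_n)$. The plan for (i) and (ii) is to argue by contradiction, each estimate on $b_nq^\pm$ being shown incompatible with one of the invariances of Proposition \ref{Ups_SV} via the monotonicity of $\Upsilon$.

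For the upper bound of (i), assume $b_nq^+\geq(\ln n)^{-\kappa}$ along a subsequence. Since $b_n\sim n\ell(a_n)$ with both $\ell$ and $a_n/n$ slowly varying (Proposition \ref{limthCauchy1}(iii)), $\ln b_n\sim\ln n$, hence $q^+\geq(\ln b_n)^{-\kappa}/b_n$ up to $1+o(1)$ factors. Applying the invariance $\Upsilon(\lambda s\ln^\kappa(1/s))\sim\Upsilon(s)$ from Proposition \ref{Ups_SV} then gives
\[\Upsilon(q^+)\leq\Upsilon\bigl((\ln b_n)^{-\kappa}/b_n\bigr)(1+o(1))\sim\Upsilon(1/b_n),\]
contradicting $\Upsilon(q^+)\sim(1+\varepsilon)\Upsilon(1/b_n)$. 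The lower bound of (i) is symmetric, using $\Upsilon((\ln b_n)^{\kappa}/b_n)\sim\Upsilon(1/b_n)$. For (ii), suppose along a subsequence $\Lambda(1/b_n)b_nq^+\geq c>0$, so $q^+\geq c/(b_n\Lambda(1/b_n))$. The invariance $\Upsilon(s\Lambda(s)^\kappa)\sim\Upsilon(s)$ of Proposition \ref{Ups_SV} (with $\kappa=-1$) yields $\Upsilon((b_n\Lambda(1/b_n))^{-1})\sim\Upsilon(1/b_n)$, hence
\[\Upsilon(q^+)\leq\Upsilon\bigl(c/(b_n\Lambda(1/b_n))\bigr)(1+o(1))\sim\Upsilon(1/b_n),\]
again contradicting $\Upsilon(q^+)\sim(1+\varepsilon)\Upsilon(1/b_n)$.

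For (iii), set $n'=\lfloor b_n/2\rfloor+1$, so that $\{\Delta(\tau)>b_n/2\}=\{\Delta(\tau)\geq n'\}$ and $n'>b_n/2$. Since $\mu$ has unbounded support in the $1$-stable regime, $\P(\Delta(\tau)\geq n')\geq\mu([n',\infty))>0$, so Proposition \ref{min_H_selon_D} is applicable with $m=\lfloor(1-\varepsilon)\Upsilon(1/b_n)\rfloor$, and it gives
\[\P\bigl(\HS(\tau)\leq m\,\big|\,\Delta(\tau)>b_n/2\bigr)\leq\exp(-n'q_m)\leq\exp(-b_nq_m/2).\]
The lower bound from (i) applied with $\kappa+2$ in place of $\kappa$ gives $b_nq_m\geq(\ln n)^{\kappa+2}$ eventually, so the right-hand side is at most $\exp(-(\ln n)^{\kappa+2}/2)=o(n^{-\kappa})$.

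The main subtlety lies in selecting the correct invariance of Proposition \ref{Ups_SV} for each prohibition: the three excluded regimes for $b_nq^\pm$ correspond exactly to the three types of multiplicative perturbations of $s=1/b_n$ left asymptotically invariant by $\Upsilon$, namely constant factors, powers of $\ln(1/s)$, and powers of $\Lambda(s)$. Once the matching is made, the contradiction argument is mechanical, and (iii) is then a direct combination of the lower bound of (i) with Proposition \ref{min_H_selon_D}.
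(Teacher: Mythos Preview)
Your proof is correct and follows essentially the same route as the paper's: both use Proposition \ref{HS_tails} to obtain $\Upsilon(q^\pm)\sim(1\pm\varepsilon)\Upsilon(1/b_n)$, then invoke the invariances of $\Upsilon$ from Proposition \ref{Ups_SV} together with the monotonicity of $\Upsilon$ to compare $q^\pm$ with the threshold $(\ln b_n)^{\mp\kappa}/b_n$ (respectively $\Lambda(1/b_n)^{-1-\kappa}/b_n$), and part (iii) is in both cases a direct plug of (i) into Proposition \ref{min_H_selon_D}. The only cosmetic difference is that the paper argues directly (from $\Upsilon(q^+)>\Upsilon((\ln b_n)^{-\kappa}/b_n)$ deduce $q^+<(\ln b_n)^{-\kappa}/b_n$), whereas you phrase the same implication as a contradiction along a subsequence; these are logically equivalent.
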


\begin{proof} Let us prove $(i)$. By Proposition~\ref{HS_tails_1} and then Proposition~\ref{Ups_SV}, we have 
\[ \Upsilon \big( q_{(1\pm \varepsilon) \Upsilon(\frac{1}{b_n})} \big) \sim (1\pm \varepsilon) \Upsilon \big(\tfrac{1}{b_n}) \sim (1\pm \varepsilon) \Upsilon \big( \tfrac{(\ln b_n)^{\mp \kappa}}{b_n} \big). \]
Then, $ \Upsilon \big( q_{(1+ \varepsilon) \Upsilon(\frac{1}{b_n})} \big)  > \Upsilon \big( \tfrac{(\ln b_n)^{- \kappa}}{b_n} \big)$ and $\Upsilon \big( q_{(1 -  \varepsilon) \Upsilon(\frac{1}{b_n})} \big)  < \Upsilon \big( \tfrac{(\ln b_n)^{\kappa}}{b_n} \big)$ for all sufficiently large $n$. Since we know from Lemma~\ref{psi'(s)_psi(s)/s} $(iii)$ that $\Upsilon$ decreases, it follows that
\[b_n \, q_{(1+ \varepsilon) \Upsilon(\frac{1}{b_n})} \leq  (\ln b_n)^{-\kappa} \quad \text{ and } \quad  b_n\,  q_{(1- \varepsilon) \Upsilon(\frac{1}{b_n})} \geq  (\ln b_n)^{\kappa}.\]
By Proposition~\ref{limthCauchy1} $(iii)$, $b_n$ varies regularly with index $1$ so we have $\ln b_n \sim \ln n$ thanks to Proposition~\ref{potter} $(i)$, which implies the desired result.

We use similar arguments to prove $(ii)$: by Proposition~\ref{HS_tails_1} then Proposition~\ref{Ups_SV}, we get
\[\Upsilon \big( q_{(1+ \varepsilon) \Upsilon(\frac{1}{b_n})} \big) \sim  (1+ \varepsilon) \Upsilon \big(\tfrac{1}{b_n})  \sim (1+ \varepsilon) \Upsilon \big( \tfrac{1}{b_n} \Lambda \big(\tfrac{1}{b_n} \big)^{-1-\kappa} \big),\]
so $ \Upsilon \big( q_{(1+ \varepsilon) \Upsilon(\frac{1}{b_n})} \big)  >  \Upsilon \big( \tfrac{1}{b_n} \Lambda \big(\tfrac{1}{b_n} \big)^{-1-\kappa} \big) $ for all sufficiently large $n$, which implies that 
\[b_n \Lambda \big(\tfrac{1}{b_n}\big) \, q_{(1+ \varepsilon) \Upsilon(\frac{1}{b_n})}  \leq  \Lambda \big(\tfrac{1}{b_n} \big)^{-\kappa} .\]
The point $(ii)$ follows since $\lim_{x\to \infty} \Lambda (1/x) =  \infty$ by Lemma~\ref{behavior_Lambda_1}.

To prove $(iii)$, we first use $(i)$ with e.g.~$\kappa  =  2$. Then, Proposition~\ref{min_H_selon_D} entails that
\begin{align*}
n^\kappa\, \P \big( \HS(\tau) \leq  (1\! -\! \varepsilon) \Upsilon \big(\tfrac{1}{b_n} \big) \, \big| \, \Delta (\tau)  >  \tfrac{1}{2} b_n \big)  &\leq  n^\kappa \exp \big(\! -\! \tfrac{1}{2} b_n \, q_{(1- \varepsilon) \Upsilon(\frac{1}{b_n})}  \big) 
\\
&\leq n^\kappa \exp \big(\! -\! \tfrac{1}{2} (\ln n)^2  \big)
\end{align*}
for all sufficiently large $n$, which entails the desired result.
\end{proof}

\noi
The proof of Theorem~\ref{cauchy_HS_order_atleast} is cut into two parts: an upper bound and a lower bound. 
\begin{proof}[Proof of the upper bound in Theorem~\ref{cauchy_HS_order_atleast}]
We first prove for all $\varepsilon  \in  (0, 1)$ that 
\begin{equation}
\label{upbound2}
\P\big( \HS(\tau)  > (1 + \varepsilon) \Upsilon \big(\tfrac{1}{b_n}\big)\ \big|\ \#\tau  \geq  n\big)\xrightarrow[n\to \infty]{} 0.
\end{equation}
To that end, we use a direct upper bound, then we combine Lemma~\ref{numtau1sta} with Lemma~\ref{behavior_Lambda_1}: 
\[\P\big( \HS(\tau) > (1\! +\! \varepsilon) \Upsilon \big(\tfrac{1}{b_n}\big)\ \big|\ \#\tau  \geq  n\big) \leq \frac{q_{(1+ \varepsilon) \Upsilon (\frac{1}{b_n})}}{\P (\#\tau  \geq  n)}   \sim b_n \Lambda \big(\tfrac{1}{b_n} \big)  \, q_{(1+ \varepsilon) \Upsilon (\frac{1}{b_n})}.  \]
Next, we use Lemma~\ref{estitechni} $(ii)$ to conclude.
\end{proof}

\begin{proof}[Proof of the lower bound in Theorem~\ref{cauchy_HS_order_atleast}]
We next prove for all $\varepsilon  \in  (0, 1)$ that 
\begin{equation}
\label{lwbound2}
\P\big(  \HS(\tau)  \leq  (1 - \varepsilon) \Upsilon \big(\tfrac{1}{b_n}\big)\ \big|\ \#\tau  \geq  n\big)\xrightarrow[n\to \infty]{} 0.
\end{equation}
To that end, we first prove that 
\begin{equation}
\label{stepun}
\lim_{n\rightarrow\infty}
\P\big( \#\tau  < n \ \big| \ \Delta(\tau) \geq   2 b_n \big) \xrightarrow[n\to \infty]{} 0 .
\end{equation}
By (\ref{Deltesti2}) in Proposition~\ref{min_H_selon_D} combined with Markov inequality, for all $\lambda  \in  (0, \infty) $, it holds that
\[\P\big( \#\tau  < n \, \big| \, \Delta(\tau) \geq   2 b_n \big) \leq e^{\lambda }\E \big[ e^{-\frac{\lambda}{n} \# \tau } \big]^{\lfloor 2 b_n\rfloor }.\]
Recall from Proposition~\ref{luka_GW} $(iv)$ that $\E \big[ \exp \big(\!  -\! \frac{\lambda}{n} \# \tau \big) \big]^{\lfloor  2 b_n\rfloor } =  \E \big[ \exp \big(\!  -\! \frac{\lambda}{n} \mathtt H_{\lfloor  2b_n\rfloor} \big) \big]$. The convergence (\ref{rela_stabi_H}) with $x = 2$ then implies that $\lim_{n\to \infty}\E \big[ e^{-\frac{\lambda}{n} \# \tau } \big]^{\lfloor  2 b_n\rfloor } =  e^{-2\lambda}$. Thus,  
\[\limsup_{n\rightarrow\infty}
\P\big( \#\tau  < n \, \big| \, \Delta(\tau) \geq   2b_n \big)  \leq e^{-\lambda }  \xrightarrow[\lambda \to \infty]{} 0,\]
which entails (\ref{stepun}) as desired.

\smallskip

We next prove that for all $c \in  (0, \infty)$,
\begin{equation}
\label{stepdeuz}
\limsup_{n\rightarrow\infty}
\frac{\P\big(\Delta(\tau) \geq  cb_n\big)}{\P(\#\tau  \geq  n)}\, \leq 4 c^{-1}.
\end{equation}
Let us begin with the case where $c =  2$. Observe that 
\[u(n):= \frac{\P\big(\Delta(\tau) \geq  2b_n\big)}{\P(\#\tau  \geq  n)} =  \P\big(\Delta(\tau) \geq  2b_n \, \big| \, \#\tau  \geq  n \big) + u(n) \P\big( \#\tau  < n \, \big| \, \Delta(\tau) \geq   2 b_n \big) .\]
By (\ref{stepun}), for all sufficiently large $n$, we have $\P\big( \#\tau  < n \, \big| \, \Delta(\tau) \geq   2 b_n \big)  \leq  \frac{1}{2}$. It therefore follows that $u(n)  \leq  2  \P\big(\Delta(\tau) \geq  2b_n \, \big| \, \#\tau  \geq  n \big)  \longrightarrow  2\P (J \! \geq\!  2) =  1$ by Proposition~\ref{D_stable_cond_atleast} $(i)$. This proves (\ref{stepdeuz}) when $c =  2$. Now, recall from Proposition~\ref{limthCauchy1} $(iii)$ that $(b_n)$ is $1$-regularly varying. Then for all sufficiently large $n$, we get $cb_n  \geq  2b_{\lfloor \frac{1}{4}cn \rfloor }$ and thus 
\[\frac{\P\big(\Delta(\tau) \geq c b_n\big)}{\P(\#\tau \geq n)}\leq  \frac{\P\big(\Delta(\tau) \geq 2b_{\lfloor \frac{1}{4}cn \rfloor }\big)}{\P(\#\tau  \geq  n)} = u\big(\lfloor \tfrac{1}{4}cn\rfloor\big) 
 \frac{\P(\#\tau  \geq  \lfloor\frac{1}{4} cn\rfloor)}{\P(\#\tau \geq n)} .\]
This implies the desired bound (\ref{stepdeuz}) since we know from Lemma~\ref{numtau1sta} that $\P (\# \tau \! \geq \! n)$ varies regularly with exponent $-1$,  
and since $\, \limsup_{n\to \infty} u(\lfloor \tfrac{1}{4}cn\rfloor) \leq 1$ as proved above.

\smallskip

Finally, we complete the proof of (\ref{lwbound2}) as follows: 
\begin{eqnarray*} 
v_n &:=& \P\big(\HS(\tau)  \leq  (1\! -\! \varepsilon)\Upsilon \big( \tfrac{1}{b_n} \big)\ \big|\ \#\tau  \geq  n\big) \\
& \leq & \frac{\P \big( \HS(\tau)  \leq  (1\! -\! \varepsilon)\Upsilon \big( \tfrac{1}{b_n} \big) \, ; \, \Delta(\tau)  > \tfrac{1}{2} 
b_n\big)}{ \P ( \#\tau \geq n) } + \P \big( \Delta(\tau) \leq \tfrac{1}{2}  b_n \  \big|\   \#\tau  \geq  n\big)   \\
& \leq &  \frac{\P \big(  \Delta(\tau)  >  \tfrac{1}{2} b_n\big)}{ \P ( \#\tau \geq n) } \P \big( \HS(\tau) \leq (1\! -\! \varepsilon)\Upsilon \big( \tfrac{1}{b_n} \big)\ \big| \  \Delta(\tau) > \tfrac{1}{2}  b_n\big) + \P \big(  \Delta(\tau) \! \leq \! \tfrac{1}{2}  b_n \   \big|\   \#\tau  \geq  n\big)  . 
\end{eqnarray*} 
This first term of the above right-hand side tends to $0$ by (\ref{stepdeuz}) with $c=1/2$ and by Lemma~\ref{estitechni} $(iii)$. Moreover, Proposition~\ref{D_stable_cond_atleast} $(i)$ ensures that $\lim_{n\to \infty}\P \big(  \Delta(\tau)  \leq  \tfrac{1}{2}  b_n \,   \big|\,   \#\tau  \geq  n\big)=0$. This completes the proof of (\ref{lwbound2}), and of Theorem~\ref{cauchy_HS_order_atleast}.
\end{proof}

\subsection{Proof of Theorem~\ref{cauchy_HS_order_equal}}
\label{pfThm3sec}

In addition to the hypotheses taken at the start of Section~\ref{pfThmsec_alpha}, here we further assume that (\ref{H1_loc}) holds, i.e.~that $\mu (n) \sim n^{-2} L(n)$ where $L$ is the same slowly varying function at $\infty$ introduced above. Indeed, observe from Proposition~\ref{potter} $(ii)$ that $\mu (n) \sim n^{-2} L(n)$ implies that $\mu \big([n, \infty)\big) \sim n^{-1} L(n)$, so that $\mu$ belongs to the domain of attraction of a $1$-stable law by Proposition~\ref{limthCauchy1}. In particular, we still have $\Upsilon(\frac{1}{b_n}) = o(\ln n)$ and $\ln\ln n = o\big(\Upsilon(\frac{1}{b_n})\big)$ as in Theorem~\ref{cauchy_HS_order_atleast}. The proof of Theorem~\ref{cauchy_HS_order_equal} is separated into two parts: firstly a lower bound and 
secondly an upper bound.

\begin{proof}[Proof of the lower bound in Theorem~\ref{cauchy_HS_order_equal}]
We prove for all $\varepsilon  \in (0, 1)$ that 
\begin{equation}
\label{lwbound3}
\P\big( \HS(\tau) \leq (1\! -\! \varepsilon) \Upsilon \big(\tfrac{1}{b_n}\big)\ \big|\ \#\tau = n\big)\xrightarrow[n\to \infty]{} 0.
\end{equation}
Indeed, observe that 
\begin{eqnarray*} 
v_n &:=& \P\big(\HS(\tau) \leq (1\! -\! \varepsilon)\Upsilon \big( \tfrac{1}{b_n} \big)\ \big|\ \#\tau = n\big) \\
& \leq & \frac{\P \big( \HS(\tau) \leq (1\! -\! \varepsilon)\Upsilon \big( \tfrac{1}{b_n} \big) \, ; \, \Delta(\tau)  > \tfrac{1}{2} 
b_n\big)}{ \P ( \#\tau = n) } + \P \big(  \Delta(\tau) \leq \tfrac{1}{2}  b_n \   \big|\   \#\tau = n\big)   \\
& \leq &  \frac{\P \big(  \Delta(\tau)\! >\! \tfrac{1}{2} b_n\big)}{ n\P ( \#\tau  = n) } \, n \,  \P \big( \HS(\tau) \leq (1\! -\! \varepsilon)\Upsilon \big( \tfrac{1}{b_n} \big)\,  \big| \,  \Delta(\tau) >  \tfrac{1}{2}  b_n\big) + \P \big(  \Delta(\tau) \leq \tfrac{1}{2}  b_n \, \big|\,   \#\tau = n\big)  . 
\end{eqnarray*} 
Then recall from (\ref{local_tails_size}) in Lemma~\ref{numtau1sta} that $n\P ( \#\tau = n)\sim \P ( \#\tau \geq n)$. Thus, by (\ref{stepdeuz}), we get 
$$ \limsup_{n\to \infty}  \frac{\P \big(  \Delta(\tau) > \tfrac{1}{2} b_n\big)}{ n\P ( \#\tau = n) }\leq 8.$$
By Lemma~\ref{estitechni} $(iii)$,  $\lim_{n\to \infty}n \P \big( \HS(\tau) \! \leq \! (1\! - \varepsilon)\Upsilon \big( \tfrac{1}{b_n} \big)\,  \big| \,  \Delta(\tau) \! >\!  \tfrac{1}{2}  b_n\big)\! = \! 0$. Then, Proposition~\ref{D_stable_cond_atleast} $(ii)$ ensures that $\P \big(  \Delta(\tau)  \leq  \tfrac{1}{2}  b_n \,   \big|\,   \#\tau \! = \! n\big) \longrightarrow 0$, which finally implies (\ref{lwbound3}).
\end{proof}

\begin{proof}[Proof of the upper bound in Theorem~\ref{cauchy_HS_order_equal}]
We finally show for all $\varepsilon \in (0, 1)$ that 
\begin{equation}
\label{upbound3}
\P\big( \HS(\tau) \geq (1\! +\! \varepsilon) \Upsilon \big(\tfrac{1}{b_n}\big)\ \big|\ \#\tau  = n\big)\xrightarrow[n\to \infty]{} 0.
\end{equation}
To prove (\ref{upbound3}), we use the result of Kortchemski \& Richier~\cite[Theorem 21]{KRcauchy}, presented as Proposition~\ref{KRvervaat}, that shows that the law of the Lukasiewicz path $(W_j(\tau))_{0\leq j \leq n}$ of $\tau$ under $\P (\, \cdot \, | \, \#\tau  =  n)$ is close in total variation distance to the law of $(Z^{(n)}_j)_{0\leq j \leq n}$ as defined in (\ref{Vervdef}). 

First, let us briefly recall the definition of $Z^{(n)}$. Let $(W_n)_{n\in \mathbb N}$ be a left-continuous random walk starting at $0$ and whose jumps distribution is given by (\ref{RWW}). We recall from (\ref{minargmin}) the two integers $I_n =  -   \min_{ 0\leq j \leq n-1} W_j$ and $\sigma_n  =   \inf\{0\!  \leq \! k\! \leq \! n\!  - \! 1 : W_k \! = \! - I_n \}$. Then,
\[ Z_j^{(n)}=  W_{\sigma_n +j}  + I_n \; \, \text{if}\; \,  0\! \leq \! j\! <\! n\! -\! \sigma_n \quad \text{ and } \quad   Z_j^{(n)}=  I_n \! -\! 1 +\! W_{j-(n-\sigma_n)} \; \, \text{if}\; \, n \! \geq \! j \! \geq \! n\! -\! \sigma_n. \]
Next, we interpret Proposition~\ref{KRvervaat} in terms of trees, i.e.~we view $Z^{(n)}$ as the Lukasiewicz path of a random tree $\tau^{(n)}$. Namely, observe that $Z_0^{(n)} = 0$, $Z_n^{(n)} = -1$, and the other values of $Z^{(n)}$ are nonnegative. Moreover, maybe except at the cutting time $n\! -\! \sigma_n \! -\! 1$ when $Z^{(n)}_{n-\sigma_n}  - Z^{(n)}_{n-\sigma_n-1} =  -1 - W_{n-1}$, the jumps of $Z^{(n)}$ are larger or equal to $- 1$. Consequently, if $W_{n-1}  \leq  0$ then Proposition~\ref{luka_GW} $(i)$ entails that $Z^{(n)}$ is the Lukasiewicz path associated with a tree $\tau^{(n)}$. If $W_{n-1}  > 0$ then we take $\tau^{(n)}$ equal to the \emph{star-tree with $n\! -\! 1$ leaves}, which we denote by $\bigstar_n$; this choice is arbitrary, we could have chosen any fixed tree with $n$ vertices.
\begin{align*}
\text{If }\quad W_{n-1} \leq 0\quad&\text{ then }\quad (W_j(\tau^{(n)}))_{0\leq j \leq n}  =  (Z^{(n)}_j)_{0\leq j \leq n}\, ;\\
\text{if }\quad W_{n-1}  > 0\quad&\text{ then }\quad\tau^{(n)} =  \bigstar_n.  
\end{align*}
Moreover, observe that $(c)$ in Proposition~\ref{limthCauchy1} implies that $\P \big( \tau^{(n)} \!\!  = \!  \bigstar_n \big) =  
\P (W_{n-1} \! >\! 0)$ tends to $0$. Thus, thanks to Proposition~\ref{KRvervaat}, in order to prove (\ref{upbound3}), we only need to show that
\begin{equation}
\label{upbound4}
\P\big( \tau^{(n)} \neq  \bigstar_n \, ; \,  \HS(\tau^{(n)}) \geq  (1\! +\! \varepsilon) \Upsilon \big(\tfrac{1}{b_n}\big)\big)\xrightarrow[n\to \infty]{} 0.
\end{equation}

To do so, we discuss a decomposition of $\tau^{(n)}$ at the `cutting time' $n\! -\! \sigma_n\! -\! 1$ and use from Proposition~\ref{luka_GW} $(iv)$ the existence of an i.d.d.~sequence $(\tau_p)_{p\in \mathbb N}$ of $\GW(\mu)$-trees such that
\[\forall p\in \mathbb N, \quad  (p+W_{\mathtt H_p +j} )_{0\leq j\leq \mathtt H_{p+1}-\mathtt H_p} = \big( W_j (\tau_p) \big)_{0\leq j \leq \# \tau_p},\]
where $\mathtt{H}_{p} = \inf \{j  \in  \mathbb N\, :\, W_j = -p\}$. Let us denote by $u^{(n)}_0 \! = \! \varnothing \! < \! u^{(n)}_1 \! < \! \ldots \! < \! u^{(n)}_{n-1}$ the vertices of $\tau^{(n)}$ listed in lexicogaphic order. From the definition of $Z^{(n)}$ and thanks to Proposition~\ref{luka_GW} $(ii)$, we first derive that if $W_{n-1}  \leq  0$ then
\begin{equation}   
\label{lasttaun}
 R_{n-\sigma_n -1} \big(  \tau^{(n)}\big)= R_{n-\sigma_n -1} \big(  \tau_{I_n}\big) \quad \text{ and } \quad k_{u^{(n)}_{n-\sigma_n -1}} (\tau^{(n)} )= -W_{n-1},
\end{equation}
where we recall from (\ref{left_portion}) that $R_m (t)=R_m t$ stands for the tree consisting of the first $m+1$ vertices of $t$ taken in lexicographic order. We now look at the subtrees that are grafted either at $u^{(n)}_{n-\sigma_n-1}$ or to the right of its ancestral line: namely, the subtrees that are grafted at the following set of vertices:
\[ B = \big\{ v \in  \tau^{(n)}\, :\,  \overleftarrow{v} \preceq u^{(n)}_{n-\sigma_n-1}  \text{ and } 
 v >  u^{(n)}_{n-\sigma_n-1} \big\}.\]
Denote by $v(0) \! < \! v(1) \! < \! \ldots \! < \! v(\#B-1)$ the vertices of $B$ listed in lexicographic order. By definition of $Z^{(n)}$ and by definition of the trees $\tau_p$ as recalled above, if $W_{n-1} \leq  0$ then
\begin{equation}   
\label{posttaun}
\theta_{v(p)} \tau^{(n)} = \tau_p , \quad 0  \leq  p \leq  I_n-1=\#B-1.
\end{equation}
To prove (\ref{upbound4}), we then use Lemma~\ref{left_right_HS} $(i)$ to get that on the event $\{  W_{n-1} \leq 0 \} $, it holds that
\[\HS \big( \tau^{(n)}\big) \leq 1 +\max \big( \HS\big( R_{n-\sigma_n -1} \big(  \tau^{(n)}\big)\big), \max_{v\in B} \HS\big( \theta_v  \tau^{(n)} \big) \big).\]
Then, the identities (\ref{lasttaun}) and (\ref{posttaun}), together with the monotony property (\ref{monotony_HS}) of $\HS$, yield that
\[\HS \big( \tau^{(n)}\big) \leq 1+ \max_{0\leq p \leq I_n}  \HS\big(\tau_p \big).\]
Therefore, we can write the following computation:
\begin{align*}
x_n & := \P\Big( \tau^{(n)}  \neq \bigstar_n \, ; \,  \HS(\tau^{(n)})  \geq  (1\! +\! \varepsilon) \Upsilon \big(\tfrac{1}{b_n}\big)\Big)\\
& \leq  \P \Big( \!\! \!\! \!\!   \max_{ \quad 0\leq p \leq \lfloor 2b_n \rfloor} \!\! \!\!  \HS\big(\tau_p \big)  \geq  (1\! +\! \varepsilon) \Upsilon \big(\tfrac{1}{b_n}\big) - 1\Big)+ \P \big(I_n > 2b_n \big) \\
& \leq  (2b_n+1)\, \P \Big( \HS(\tau)  \geq (1\! +\! \varepsilon) \Upsilon \big(\tfrac{1}{b_n}\big) - 1 \Big) + \P \big( \tfrac{1}{n} \mathtt H_{ \lfloor 2b_n \rfloor} \leq 1 \big),  
\end{align*} 
since the $\tau_p$ are $\GW(\mu)$-trees. Then, recall from (\ref{rela_stabi_H}) that $\tfrac{1}{n} \mathtt H_{ \lfloor 2b_n \rfloor}\to 2$ in probability, so that $\P \big( \tfrac{1}{n} \mathtt H_{ \lfloor 2b_n \rfloor} \! \leq \! 1 \big) \to 0$. Then, 
Lemma~\ref{estitechni} $(i)$ ensures that $b_n \P \big( \HS(\tau) \! \geq \! (1\! +\! \varepsilon) \Upsilon \big(\tfrac{1}{b_n}\big)\! -\! 1 \big) \to 0$, which finally implies (\ref{upbound4}) and which completes the proof of Theorem~\ref{cauchy_HS_order_equal}.
\end{proof} 

\subsection{Examples}
\label{sec:ex}

We conclude this section by considering three examples of functions $L$ that may govern the tails of $\mu$ via the estimate $n\mu ([n , \infty)) \sim  L(n)$. Recall from Proposition~\ref{limthCauchy1} $(i)$ that for $\mu$ to be critical and non-trivial and to belong to the domain of attraction of a $1$-stable law, $L$ needs to be slowly varying at $\infty$ and such that $\int_1^\infty y^{-1} L(y) \, \dd y < \infty$. In the three examples considered, we translate the estimate of $\P(\HS(\tau)>n)$ given by Proposition~\ref{HS_tails_1} and provide an explicit asymptotic equivalent of the rescaling sequence $\Upsilon \big(\frac{1}{b_n} \big)$ within Theorems~\ref{cauchy_HS_order_atleast} and \ref{cauchy_HS_order_equal}. To lighten notation, we write in each case, for $n\in\bbN$ and $x\in (0,\infty)$,
\[Q_n=-\ln \P(\HS(\tau)>n)\quad\text{ and }\quad\overline{L}(x)=\int_x^\infty\! y^{-1}L(y)\,\dd y.\]
Before we begin, let us also recall from the start of Section~\ref{pfThm2sec} that $\ln b_n \sim \ln n$.

\begin{longlist}
\item[$(\mathbf{a})$] Let $\kappa \in (0, \infty)$. Let us consider the case where $L(x)= (\ln x)^{-1-\kappa}$ for $x\in [e, \infty)$. Indeed, this function varies slowly at $\infty$ and is such that $\int_e^\infty y^{-1} L(y) \, \dd y < \infty$. Clearly, we have $\overline{L} (x) = \frac{1}{\kappa} (\ln x)^{-\kappa}=  \frac{1}{\kappa} L(x) \ln x$ for $x\geq e$. Thus, Lemma~\ref{behavior_Lambda_1} yields that
$$ \Lambda (s) \sim_{0^+} \tfrac{1}{\kappa} \ln 1/s  \quad \text{ and } \quad \Upsilon (s) \sim_{0^+}  \frac{\ln 1/s }{\ln \ln 1/s }.$$
Proposition~\ref{HS_tails_1} then asserts that $\frac{Q_n}{\ln Q_n}\sim  n$. Thus, $\ln Q_n\sim \ln n\sim \ln b_n
$ and it follows that
\[-\ln \P (\HS(\tau)>n) \sim   n\ln n\quad \text{ and }\quad \Upsilon \big(\tfrac{1}{b_n} \big) \sim \frac{\ln n}{\ln \ln n} .\]

\item[$(\mathbf{b})$] Let $\kappa  \in  (0, 1)$. Let us consider the case where $L(x)= \exp (-(\ln x)^{\kappa})$ for $x\in [1, \infty) $. This function varies slowly at $\infty$ and verifies $\int_1^\infty y^{-1} L(y) \, \dd y \! < \! \infty$. Integrating by parts gives
\[ \overline{L} (x) =  \tfrac{1}{\kappa}(\ln x)^{1-\kappa} L(x) +  \tfrac{1-\kappa}{\kappa} \int_x^\infty \!\! 
\frac{L(y)}{y (\ln y)^{\kappa}}\,\dd y\sim_\infty \tfrac{1}{\kappa}(\ln x)^{1-\kappa} L(x),\] 
and Lemma~\ref{behavior_Lambda_1} thus implies that 
$$ \Lambda (s) \sim_{0^+} \tfrac{1}{\kappa}( \ln 1/s)^{1-\kappa} \quad \text{ and } \quad \Upsilon (s) \sim_{0^+} \tfrac{1}{1-\kappa}\cdot  \frac{\ln 1/s }{\ln \ln 1/s }.$$
Proposition~\ref{HS_tails_1} asserts that $\frac{Q_n}{\ln Q_n}\sim (1\! -\! \kappa ) n$. Thus, $\ln Q_n \sim \ln n \sim \ln b_n
$ and it follows that
\[-\ln \P (\HS(\tau)>n) \sim  (1\! -\! \kappa ) n\ln n\quad \text{ and }\quad \Upsilon \big(\tfrac{1}{b_n} \big) \sim  \frac{1}{1\! -\! \kappa}\cdot \frac{\ln n}{\ln \ln n}.\]

\item[$(\mathbf{c})$] Let us consider the case where $L(x)=\exp(-\ln x/\ln\ln x)$ for all $x\in [e^e, \infty) $. This function still slowly varies and verifies $\int_{e^e}^\infty y^{-1} L(y) \, \dd y  <  \infty$. We make the change of variable $z=\ln y/\ln\ln y$ and then an integration by parts to compute that
\[\overline{L}(x)\sim_\infty \int_{\frac{\ln x}{\ln\ln x}}^\infty \!  \!\!\!  e^{-z}\ln(z) \,\dd z\, \sim_\infty \, \Big[-e^{-z}\ln z\Big]_{\frac{\ln x}{\ln\ln x}}^\infty \, \sim_\infty \, L(x) \ln \ln x.  \]
Therefore, Lemma~\ref{behavior_Lambda_1} implies that
\[ \Lambda(s) \sim_{0^+} \ln \ln 1/s \quad \text{ and } \quad \Upsilon (s) \sim_{0^+}  \frac{\ln 1/s}{\ln \ln \ln 1/s}.\]
Proposition~\ref{HS_tails_1} then asserts that $ \frac{Q_n}{\ln \ln Q_n}   \sim n $. Thus, $\ln Q_n\sim \ln n\sim \ln b_n
$ and it follows that
\[-\ln \P(\HS(\tau)>n) \sim n \ln \ln n\quad \text{ and }\quad \Upsilon \big(\tfrac{1}{b_n} \big) \sim \frac{\ln n}{\ln \ln \ln n}.\]
\end{longlist}

\section{Extensions and open questions}
\label{sec:open}

We conclude this paper by mentioning some directions for future research that are naturally suggested by our results.

\begin{longlist}
\item[$(i)$] Theorems~\ref{stable_HS_order}, \ref{cauchy_HS_order_atleast} and \ref{cauchy_HS_order_equal} are laws of large numbers, i.e.~they give a deterministic equivalent for the Horton--Strahler number of large Galton--Watson trees. It is thus natural to look for a central limit theorem, i.e.~describe the rate of convergence and ask whether the fluctuations admit an asymptotic distribution. While we leave these questions open in general, the companion paper \cite{Kha24} answers them in the particular case of the $\alpha$-stable offspring distributions, for $\alpha\in(1,2]$, briefly presented in Remark~\ref{distribution_HS_exact_stable}. Then, the fluctuations around the equivalent $\frac{1}{\alpha}\log_{\frac{\alpha}{\alpha-1}}n$ are of constant order but they do not actually converge because of some deterministic oscillations. However, compensating for these reveals a genuine asymptotic distribution, characterized in terms of the scaling limit of the trees.

We expect that the ideas and methods developed in this paper will be key to determining the deviations from $\frac{1}{\alpha}\log_{\frac{\alpha}{\alpha-1}}n$ for general critical offspring distributions in the domain of attraction of an $\alpha$-stable law with $\alpha\in(1,2]$. Indeed, Propositions~\ref{first_moment_size_general} and \ref{min_hauteur_selon_H_general} and Corollary~\ref{maj_hauteur_selon_H_stable} are the main tools used in \cite[Proposition 4.2]{Kha24} to derive the correct constant order discussed above. However, \cite{Kha24} also relies on the exact formula $\P (\HS(\tau_\alpha)>n)= \big(1\! -\! \tfrac{1}{\alpha}\big)^{n+1}$ (see Remark~\ref{distribution_HS_exact_stable}). Thus, the main obstacle to generalizing the results of~\cite{Kha24} seems to be obtaining a more precise estimate of $\P(\HS(\tau)> n)$ than (\ref{Ytail_alpha}), which might require some additional regularity assumptions on $\psi$. In fact, we do not believe that $\P(\HS(\tau)> n)\sim \text{Cst}\cdot \big(1\! -\! \tfrac{1}{\alpha}\big)^{n}$ would universally hold, and thus expect the rate of convergence to be model-dependent.

\item[$(ii)$] Comparing Theorem~\ref{stable_HS_order} with Theorems~\ref{cauchy_HS_order_atleast} and \ref{cauchy_HS_order_equal} yields that if the critical offspring distribution $\mu$ is in the domain of attraction of an $\alpha$-stable law, then the Horton--Strahler number is of the same order as the logarithm of the size if and only if $\alpha\in (1,2]$. It would be nice to find a condition on $\mu$ for which $\HS(\tau)/\ln n$ under $\P(\, \cdot\, |\, \#\tau=n)$ converge to $0$ in probability, without any domain-of-attraction assumptions. This would fit in a recent line of work devoted to finding, without regularity assumptions, strong uniform bounds on shape parameters of random trees such as the height~\cite{DonLAB24}, the width~\cite{BranHamKerLAB22}, or their product~\cite{DonKha24}.

\item[$(iii)$] In this paper, we focused on large Galton--Watson trees with a fixed offspring distribution. It would be interesting to study the behavior of the Horton--Strahler number when the offspring distribution can vary with the size $n$, or for more general models of random trees such as multi-type Galton--Watson trees or uniform trees with fixed degree sequences.
\end{longlist}

%
%

\begin{acks}[Acknowledgments]
I am deeply indebted to my Ph.D.~advisor Thomas Duquesne for introducing me to the Horton--Strahler number, for suggesting me several problems about it, and for many insightful discussions. I am especially grateful for his help to prove Lemma~\ref{behavior_Lambda_1} and to improve the quality of this paper. I warmly thank Guillaume Boutoille et Yoan Tardy for a stimulating discussion about the correct order of magnitude that should appear in the Cauchy regime. Many thanks are due to Quentin Berger for some feedback about the proof of the estimate (\ref{PS_tails_size}). I thank Igor Kortchemski for some precisions about his work. Finally, I would like to thank the anonymous referees for their constructive comments and suggestions for future research directions.
\end{acks}
\begin{funding}
This research has been supported by the Natural Sciences and Engineering Research Council of Canada (NSERC) via a Banting postdoctoral fellowship [BPF-198443].
\noi
Cette recherche a été financée par le Conseil de recherches en sciences naturelles et en génie du Canada (CRSNG) via une bourse postdoctorale Banting [BPF-198443].
\end{funding}



\bibliographystyle{imsart-number} 
\bibliography{tech_ht_gw_15juin}       


\end{document}